\numberwithin{equation}{section}
\newtheorem{theorem}{Theorem}[section]
\newtheorem{proposition}[theorem]{Proposition}
\newtheorem{lemma}[theorem]{Lemma}
\newtheorem{corollary}[theorem]{Corollary}
\newtheorem{claim}[theorem]{Claim}
\theoremstyle{definition}
\newtheorem{definition}[theorem]{Definition}
\theoremstyle{remark}
\newtheorem{remark}[theorem]{Remark}
\newtheorem{example}[theorem]{Example}
\newtheorem*{conv}{Convention}
\begin{document}
\title{Efficient presentations of relative Cuntz-Krieger algebras}

\author{Lisa Orloff Clark}
\address{Lisa Orloff Clark\\School of Mathematics and Statistics\\Victoria University of Wellington 
\\PO Box 600
\\Wellington 6140\\New Zealand}
% \email{lisa.clark@vuw.ac.nz}

% 
 \author{Yosafat E. P. Pangalela}
\address{Yosafat E.P. Pangalela\\
Pinnacle Investment\\Wisma GKBI 38th Floor, Suite 3805 \\ Jl Jendral Sudirman No 28 \\
Jakarta 10210 \\ Indonesia}
\email{yosafat.pangalela@hotmail.com}

\subjclass[2010]{46L05}
\keywords{higher-rank graph, relative graph algebra, graph $C^{\ast}$-algebra}

\begin{abstract}
In this article, we present a new method to study relative Cuntz-Krieger
algebras for higher-rank graphs. We only work with edges rather than paths
of arbitrary degrees. We then use this method to simplify the existing
results about relative Cuntz-Krieger algebras. We also give applications to
study ideals and quotients of Toeplitz algebras.
\end{abstract}

\thanks{This research was supported by 
Marsden grant 15-UOO-071 from the Royal Society of New Zealand.  \\
Thank you to Iain Raeburn for sharing his insights.  }

\maketitle

\section{Introduction}

\label{Section-Introduction}

For a directed graph $E$, Fowler and Raeburn introduced the Toeplitz algebra
$TC^{\ast }(E)$ \cite{FR99}. The usual graph algebra $C^{\ast }(E)$ (or
\emph{Cuntz-Krieger algebra}) is the quotient of $TC^{\ast }(E)$ in which
the \emph{Cuntz-Krieger relation}%
\begin{equation*}
p_{v}=\sum_{r(e)=v}s_{e}s_{e}^{\ast }
\end{equation*}%
is imposed at every \emph{regular vertex} $v$; that is, at every vertex that
receives only finitely many edges. Muhly and Tomforde described the
quotients of $C^{\ast }(E)$ as \emph{relative graph algebras}: for a set $V$
of regular vertices, the relative graph algebra $C^{\ast }(E;V)$ is the
quotient of $TC^{\ast }(E)$ in which the Cuntz-Krieger relation is imposed
at every $v\in V$ \cite{MT04}. These relations are independent of each
other: if $v\notin V$, then $p_{v}\neq \sum_{e\in vE^{1}}s_{e}s_{e}^{\ast }$
in $C^{\ast }(E;V)$.

The higher-rank graphs or \emph{$k$-graphs} of Kumjian and Pask \cite{KP00}
are higher-dimensional analogues of directed graphs, and they also have both
a Cuntz-Krieger algebra \cite{KP00,RSY03, RSY04} and a Toeplitz algebra \cite%
{RS05}. Here we consider the class of \emph{finitely aligned} $k$-graphs
discovered in \cite{RS05} and studied in \cite{RSY04}. For such a $k$-graph $%
\Lambda $, the Cuntz-Krieger algebra $C^{\ast }(\Lambda )$ is the quotient
of the Toeplitz algebra $TC^{\ast }(\Lambda )$ in which the \emph{%
Cuntz-Krieger relation}%
\begin{equation}
\prod_{\lambda \in E}( t_{v}-t_{\lambda }t_{\lambda }^{\ast }) =0
\label{CKE}
\end{equation}%
is imposed at every \emph{finite exhaustive set }$E$ of every $v\Lambda
:=r^{-1}(v)$. However, the relations \eqref{CKE} are not independent of each
other: imposing the relations for some exhaustive sets automatically imposes
others. For a collection $\mathcal{E}$\ of finite exhaustive sets of $%
\Lambda $, Sims introduced the \emph{relative Cuntz-Krieger algebras} $%
C^{\ast }(\Lambda ;\mathcal{E})$ to be the quotient of $TC^{\ast }(\Lambda )$
in which the relation \eqref{CKE} is imposed for every $E\in \mathcal{E}$.
He also identified \emph{satiated} collections $\mathcal{E}$ of finite
exhaustive sets which describe the possible quotients $C^{\ast }(\Lambda ;%
\mathcal{E})$ \cite{Si06}.

Sims's satiated sets are huge, and his exhaustive sets can include paths of
arbitrary degrees. However, while it has been standard since the beginning
of the subject \cite{KP00} to work with Cuntz-Krieger relations of all
degrees, we know from \cite[Appendix~C]{RSY04} that it is possible to work
only with sets of edges, as one does for directed graphs (the $1$-graphs),
and that it is then easier to see what is happening. Here we describe and
study a family of \emph{efficient} collections in which the exhaustive sets
contain only a minimal number of edges. This simplifies the description of
the relations being imposed when passing from the Toeplitz algebra to a
relative Cuntz-Krieger algebra.

This paper is organised as follows. In Section \ref{Section-k-graph}, we
give the definition of higher-rank graphs and establish our notation. In
Section \ref{Section-efficient}, we introduce efficient sets and give
examples. For a directed graph $E$, every set of regular vertices of $E$ 
can be viewed as
an efficient set (Example \ref{example-efficient-graph}). Hence relative
graph algebras for directed graphs \cite{MT04} were defined using efficient
sets rather than Sims's satiated sets.

In\ Section \ref{Section-boundary-paths}, we introduce $\mathcal{E}$%
-boundary paths (Definition \ref{compatible-boundary-paths}). We use these
paths to establish properties of the universal relative Cuntz-Krieger
algebras (Proposition \ref{properties-of-relative-CK-algebra}).\ In Section %
\ref{Section-efficient-and-satiated}, we discuss the satiated sets of Sims
\cite{Si06} and show that efficient sets are in bijective correspondence
with satiated sets (Theorem \ref{bijection-efficient-and-satiated}).

Finally, we discuss applications in\ Section \ref{Section-applications}. The
first application is a new version of the gauge-invariant uniqueness theorem
for relative Cuntz-Krieger algebras of \cite{Si06} (Theorem \ref%
{the-gauge-invariant-uniqueness-theorem}). The second application is to
simplify a complete listing of the gauge-invariant ideals in a relative
Cuntz-Krieger algebra of \cite{SWW14} (Theorem \ref%
{gauge-invariant-ideals-efficient}). The authors of \cite{Si06} and \cite%
{SWW14} use satiated sets to formulate both results. By translating these
into efficient sets, we provide alternative versions, which are considerably
more checkable. In the last application, we investigate the relationship
among $k$-graphs Toeplitz algebras and their ideals and quotients
(Proposition \ref{diagram-Toeplitz-algebra}).

\section{Preliminaries}

\label{Section-k-graph}

Let $k$ be a positive integer. We regard $\mathbb{N}^{k}$ as an additive
semigroup with identity $0$. We write $n\in \mathbb{N}^{k}$ as $(
n_{1},\ldots ,n_{k}) $ and define $\left\vert n\right\vert :=\sum_{1\leq
i\leq k}n_{i}$. We denote the usual basis of $\mathbb{N}^{k}$\ by $\{
e_{i}\} $. For $m,n\in \mathbb{N}^{k}$, we write $m\leq n$ to denote $%
m_{i}\leq n_{i}$ for $1\leq i\leq k$. We also write $m\vee n$ for\ their
coordinate-wise maximum and $m\wedge n$ for their coordinate-wise minimum.

A \emph{higher-rank graph} or $k$\emph{-graph} is a countable category $%
\Lambda $ endowed with a functor $d:\Lambda \rightarrow \mathbb{N}^{k}$
satisfying the \emph{factorisation property}: for $\lambda \in \Lambda $ and
$m,n\in \mathbb{N}^{k}$ with $d( \lambda ) =m+n$, there are unique elements $%
\mu ,\nu \in \Lambda $ such that $\lambda =\mu \upsilon $, $d( \mu ) =m$ and
$d( \nu ) =n$. We then write $\lambda ( 0,m) $ for $\mu $ and $\lambda (
m,m+n) $ for $\nu $.

For $n\in \mathbb{N}^{k}$, we define%
\begin{equation*}
\Lambda ^{n}:=\{ \lambda \in \Lambda :d(\lambda )=n\}
\end{equation*}%
and call the elements $\lambda $ of $\Lambda ^{n}$ \emph{paths of degree }$n$%
. For $1\leq i\leq k$, a path $e\in \Lambda ^{e_{i}}$ is an \emph{edge}, and
we write
\begin{equation*}
\Lambda ^{1}:=\bigcup_{1\leq i\leq k}\Lambda ^{e_{i}}
\end{equation*}%
for the set of all edges. We regard elements of $\Lambda ^{0}$ as \emph{%
vertices}. For $v\in \Lambda ^{0}$, $\lambda \in \Lambda $ and $E\subseteq
\Lambda $, we define
\begin{align*}
vE& :=\{ \mu \in E:r(\mu )=v\} \text{, } \\
\lambda E& :=\{ \lambda \mu \in \Lambda :\mu \in E,r(\mu )=s(\lambda )\}
\text{,} \\
E\lambda & :=\{ \mu \lambda \in \Lambda :\mu \in E,s(\mu )=r(\lambda )\}
\text{.}
\end{align*}%
A $k$-graph $\Lambda $ is \emph{row-finite} if for $v\in \Lambda ^{0}$ and $%
1\leq i\leq k$, the set $v\Lambda ^{e_{i}}$ is finite. A vertex $v\in
\Lambda ^{0}$ is a \emph{source} if there exists $m\in \mathbb{N}^{k}$ such
that $v\Lambda ^{m}=\emptyset $.

To visualise $k$-graphs, we use coloured graphs of \cite{HRSW13}. For a $k$%
-graph $\Lambda $, we choose $k$-different colours $c_{1},\ldots ,c_{k}$ and
associate each edge $e\in \Lambda ^{e_{i}}$ to an edge of colour $c_{i}$. We
call this coloured graph the \emph{skeleton }of $\Lambda $.

\begin{conv}
We draw
\begin{equation*}
\begin{tikzpicture} \node[inner sep=1pt] (v) at (0,0) {$\bullet$};
\node[inner sep=1pt] at (-0.3,0) {$v$}; \node[inner sep=1pt] (w) at (2,0)
{$\bullet$}; \node[inner sep=1pt] at (2.3,0) {$w$}; \draw[-latex, blue, very
thick] (v) edge [out=-45, in=-135](w); \draw[-latex, red, dashed, very
thick] (w) edge [out=135, in=45] node[pos=0.5, above, black]{$m$}(v);
\end{tikzpicture}
\end{equation*}%
in the skeleton of a $2$-graph to denote that there are $m$ $( 1,0) $-edges
from $w$ to $v$ and $1$ $( 0,1) $-edge from $v $ to $w$.
\end{conv}

For $\lambda ,\mu \in \Lambda $, we define
\begin{align*}
\operatorname{MCE}(\lambda ,\mu ):=& \{\tau \in \Lambda ^{d(\lambda )\vee d(\mu
)}:\tau (0,d(\lambda ))=\lambda \text{ and }\tau (0,d(\mu ))=\mu \}\text{,}
\\
\Lambda ^{\min }(\lambda ,\mu )& :=\{ (\lambda ^{\prime },\mu ^{\prime })\in
\Lambda \times \Lambda :\lambda \lambda ^{\prime }=\mu \mu ^{\prime }\in
\operatorname{MCE}(\lambda ,\mu )\} \text{.}
\end{align*}%
We say that $\Lambda $ is \emph{finitely aligned} if $\Lambda ^{\min
}(\lambda ,\mu )$ is finite (possibly empty) for all $\lambda ,\mu \in
\Lambda $.

For $v\in \Lambda ^{0}$, $E\subseteq v\Lambda $ is \emph{exhaustive} if for $%
\lambda \in v\Lambda $, there exists $\mu \in E$ with $\Lambda ^{\min }(
\lambda ,\mu ) \neq \emptyset $. We write $\operatorname{FE}( \Lambda ) $ to denote
the collection of finite exhaustive sets $E\subseteq v\Lambda \backslash \{
v\} $, and%
\begin{equation*}
\operatorname{FE}( \Lambda ^{1}) :=\operatorname{FE}( \Lambda ) \cap \Lambda ^{1}\text{.}
\end{equation*}%
For $E\in \operatorname{FE}( \Lambda ) $, we write $r( E) $ for the vertex $v\in
\Lambda ^{0}$ such that $E\subseteq v\Lambda $.

A \emph{Toeplitz-Cuntz-Krieger }$\Lambda $\emph{-family} is a collection $\{
T_{\lambda }:\lambda \in \Lambda \} $ of partial isometries in a $C^{\ast }$%
-algebra $B$ satisfying:

\begin{enumerate}
\item[(TCK1)] $\{ T_{v}:v\in \Lambda ^{0}\} $ is a collection of mutually
orthogonal projections;

\item[(TCK2)] $T_{\lambda }T_{\mu }=T_{\lambda \mu }$ whenever $s( \lambda )
=r( \mu ) $; and

\item[(TCK3)] $T_{\lambda }^{\ast }T_{\mu }=\sum_{(\lambda ^{\prime },\mu
^{\prime })\in \Lambda ^{\min }( \lambda ,\mu ) }T_{\lambda ^{\prime
}}T_{\mu ^{\prime }}^{\ast }$ for all $\lambda ,\mu \in \Lambda $.
\end{enumerate}

For $\mathcal{E}\subseteq \operatorname{FE}( \Lambda ) $, a \emph{relative
Cuntz-Krieger }$( \Lambda ;\mathcal{E}) $\emph{-family} is a
Toeplitz-Cuntz-Krieger $\Lambda $-family $\{ T_{\lambda }:\lambda \in
\Lambda \} $ which satisfies the \emph{Cuntz-Krieger relations}:

\begin{enumerate}
\item[(CK)] $\prod_{\lambda \in E}( T_{r( E) }-T_{\lambda }T_{\lambda
}^{\ast }) =0$ for all $E\in \mathcal{E}$.
\end{enumerate}

In \cite[Section 4]{RS05}, Raeburn and Sims proved that there is a $C^{\ast
} $-algebra $TC^{\ast }(\Lambda )$ generated by a universal
Toeplitz-Cuntz-Krieger $\Lambda $-family $\{ t_{\lambda }:\lambda \in
\Lambda \} $. For $\mathcal{E}\subseteq \operatorname{FE}(\Lambda )$, the quotient $%
C^{\ast }(\Lambda ;\mathcal{E})$ of $TC^{\ast }(\Lambda )$ by the ideal
generated by%
\begin{equation*}
\Big\{\prod_{\lambda \in E}(t_{r(E)}-t_{\lambda }t_{\lambda }^{\ast }):E\in
\mathcal{E}\Big\}
\end{equation*}%
is generated by a universal relative Cuntz-Krieger $(\Lambda ;\mathcal{E})$%
-family $\{s_{\lambda }^{\mathcal{E}}:\lambda \in \Lambda \}$. For a
relative Cuntz-Krieger $(\Lambda ;\mathcal{E})$-family $\{ S_{\lambda
}:\lambda \in \Lambda \} $ in $C^{\ast }$-algebra $B$, we write $\pi _{S}$
for the homomorphism of $C^{\ast }(\Lambda ;\mathcal{E})$ into $B$ such that $%
\pi _{S}(s_{\lambda }^{\mathcal{E}})=S_{\lambda }$ for $\lambda \in \Lambda $%
.
%%%%%%%%%%%%%%%%%%%%%%%%%%%%%%%%%%%%%%%

\section{Efficient sets}

\label{Section-efficient}

In this section, we introduce efficient sets and study their properties.
Imposing the Cuntz-Krieger relations on a higher-rank graph has domino
effects, which are described in Proposition \ref{efficient-exhaustive-edge}
and Proposition \ref{properties-of-ideal}, and these effects motivate
Definition \ref{definition-efficient}.

\begin{proposition}
\label{efficient-exhaustive-edge} Let $v\in \Lambda ^{0}$, $E\in v\operatorname{FE%
}( \Lambda ^{1}) $ and $f\in r( E) \Lambda ^{1}\backslash E$. Suppose that $%
\{ T_{\lambda }:\lambda \in \Lambda \} $ is a Toeplitz-Cuntz-Krieger $%
\Lambda $-family such that $\prod_{e\in E}(T_{v}-T_{e}T_{e}^{\ast })=0$.
Define
\begin{equation*}
\operatorname{Ext}_{\Lambda }( f;E) :=\{g\in s( f) \Lambda :fg\in \operatorname{MCE}( f,e)
\text{ for some }e\in E\}\text{.}
\end{equation*}%
Then $\operatorname{Ext}_{\Lambda }(f;E)\in s( f) \operatorname{FE}( \Lambda ^{1}) $ and $%
\prod_{g\in \operatorname{Ext}_{\Lambda }(f;E)}(T_{s( f) }-T_{g}T_{g}^{\ast })=0.$
\end{proposition}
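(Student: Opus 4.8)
The plan is to prove the three assertions in turn: that $\operatorname{Ext}_\Lambda(f;E)$ consists of edges, that it lies in $s(f)\operatorname{FE}(\Lambda^1)$, and finally the Cuntz--Krieger relation. Write $d(f)=e_i$. First I would check that every $g\in\operatorname{Ext}_\Lambda(f;E)$ is an edge: if $fg\in\operatorname{MCE}(f,e)$ with $d(e)=e_j$, then the case $j=i$ forces $e=(fg)(0,e_i)=f$, contradicting $f\notin E$; hence $j\neq i$, so $d(fg)=e_i+e_j$ and $g=(fg)(e_i,e_i+e_j)\in\Lambda^{e_j}\subseteq\Lambda^1$. Finiteness is then immediate from finite alignment, since for each $e$ the set $\{g:fg\in\operatorname{MCE}(f,e)\}$ is in bijection with $\Lambda^{\min}(f,e)$ (via $(f',e')\mapsto f'$), and $E$ is finite.

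For exhaustiveness, I would take $\lambda\in s(f)\Lambda$ and apply the exhaustiveness of $E$ to $f\lambda\in v\Lambda$: there exist $e\in E$ and $\tau\in\operatorname{MCE}(f\lambda,e)$. As above $d(e)=e_j$ with $j\neq i$, and I would set $g:=\tau(e_i,e_i+e_j)$, so that $\tau(0,e_i+e_j)=fg\in\operatorname{MCE}(f,e)$ and hence $g\in\operatorname{Ext}_\Lambda(f;E)$. A coordinatewise degree count gives $d(\tau)=e_i+(d(\lambda)\vee e_j)$, so the tail $\tau(e_i,d(\tau))$ has degree $d(\lambda)\vee d(g)$ and restricts to $\lambda$ and to $g$; thus it lies in $\operatorname{MCE}(\lambda,g)$, giving $\Lambda^{\min}(\lambda,g)\neq\emptyset$. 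This establishes $\operatorname{Ext}_\Lambda(f;E)\in s(f)\operatorname{FE}(\Lambda^1)$.

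For the relation, I would work with the projections $P_g:=T_{s(f)}-T_gT_g^*$ and $Q_e:=T_v-T_eT_e^*$, using the standard identity $T_\lambda T_\lambda^*T_\mu T_\mu^*=\sum_{\tau\in\operatorname{MCE}(\lambda,\mu)}T_\tau T_\tau^*$ (a consequence of (TCK3)) to see that all these projections commute. Writing $R:=T_fT_f^*$ and using $T_f^*T_f=T_{s(f)}$ together with $P_gT_{s(f)}=P_g$, conjugation by $T_f$ telescopes to
\begin{equation*}
T_f\Big(\prod_{g\in\operatorname{Ext}_\Lambda(f;E)}P_g\Big)T_f^*=\prod_{g\in\operatorname{Ext}_\Lambda(f;E)}\big(R-T_{fg}T_{fg}^*\big).
\end{equation*}
For each fixed $e$, the projections $\{T_\tau T_\tau^*:\tau\in\operatorname{MCE}(f,e)\}$ are mutually orthogonal and dominated by $R$, so $\prod_{\tau\in\operatorname{MCE}(f,e)}(R-T_\tau T_\tau^*)=R-\sum_\tau T_\tau T_\tau^*=R-RT_eT_e^*=RQ_e$. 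Since $g\mapsto fg$ is a bijection of $\operatorname{Ext}_\Lambda(f;E)$ onto $\bigcup_{e\in E}\operatorname{MCE}(f,e)$ and repeated projection factors are harmless, the right-hand side above equals $\prod_{e\in E}RQ_e=R\prod_{e\in E}Q_e=0$ by hypothesis. Hence $T_f(\prod_gP_g)T_f^*=0$, and compressing by $T_f^*$ on the left and $T_f$ on the right (using $T_{s(f)}\prod_gP_g=\prod_gP_g$) yields $\prod_gP_g=0$.

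The main obstacle is the relation in the last paragraph: arranging the bookkeeping so that conjugation by $T_f$ converts the $\operatorname{Ext}$-product into an $\operatorname{MCE}$-product over $f$, and then recognising each per-$e$ factor as $RQ_e$. The commutation of all the projections via the $\operatorname{MCE}$ formula is precisely what makes these manipulations legitimate. By comparison, the two structural claims are routine once one observes that distinct same-colour edges have empty $\operatorname{MCE}$.
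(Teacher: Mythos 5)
Your proposal is correct, and it splits into two halves that compare differently with the paper's proof. For the structural claim $\operatorname{Ext}_{\Lambda }(f;E)\in s(f)\operatorname{FE}(\Lambda ^{1})$, you genuinely depart from the paper: the paper only proves the ``edge'' property directly (by the same degree argument you give, using $f\notin E$ to rule out $d(e)=d(f)$) and outsources finiteness and exhaustiveness to \cite[Lemma C.5]{RSY04}, whereas you prove both from scratch --- finiteness via the bijection with $\bigcup_{e\in E}\Lambda ^{\min }(f,e)$ and finite alignment, and exhaustiveness by lifting $\lambda \in s(f)\Lambda $ to $f\lambda $, extracting $\tau \in \operatorname{MCE}(f\lambda ,e)$, and checking that $g:=\tau (e_{i},e_{i}+e_{j})$ lies in $\operatorname{Ext}_{\Lambda }(f;E)$ with $\tau (e_{i},d(\tau ))\in \operatorname{MCE}(\lambda ,g)$. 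This makes your argument self-contained at the cost of length; the degree bookkeeping you sketch does check out. For the Cuntz--Krieger relation, your computation is essentially the paper's run in the opposite direction: the paper evaluates $\prod_{e\in E}T_{f}^{\ast }(T_{v}-T_{e}T_{e}^{\ast })T_{f}$ two ways (expanding via (TCK3) to get the $\operatorname{Ext}$-product, and commuting to pull out the vanishing $E$-product), whereas you conjugate upward, $T_{f}\big(\prod_{g}P_{g}\big)T_{f}^{\ast }=\prod_{g}(R-T_{fg}T_{fg}^{\ast })$, regroup by $e$ using orthogonality of $\{T_{\tau }T_{\tau }^{\ast }:\tau \in \operatorname{MCE}(f,e)\}$ and the identity $RT_{e}T_{e}^{\ast }=\sum_{\tau \in \operatorname{MCE}(f,e)}T_{\tau }T_{\tau }^{\ast }$ to obtain $R\prod_{e\in E}Q_{e}=0$, and then compress back by $T_{f}^{\ast }\cdot T_{f}$. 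The two computations rest on the same facts (the (TCK3) expansion, equivalently the $\operatorname{MCE}$ product formula, plus commutativity of range projections), so neither has a real advantage; the paper's version saves the final compression step, while yours avoids the double-indexed sum over pairs in $\Lambda ^{\min }(f,e)$.
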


\begin{proof}
Since $E\in \operatorname{FE}( \Lambda ) $, \cite[Lemma C.5]{RSY04} gives 
$\operatorname{Ext}_{\Lambda }(f;E)\in s( f) \operatorname{FE}( \Lambda ) $.   
We claim that $\operatorname{Ext}_{\Lambda }(f;E)\subseteq s( f) \Lambda ^{1}$.  To prove the claim, let $g\in\operatorname{Ext}_{\Lambda }(f;E)$.  Then there exists an edge $e\in E$ such that $fg\in \operatorname{MCE}( f,e) $.   First we show that $d(e)\neq d(f)$.   
Suppose for a contradiction that $d( f) =d( e) $. Since $fg\in\operatorname{MCE}( f,e) $,
\begin{equation*}
d( f) +d( g) =d( fg) =d( f) \vee d( e) =d( f) \text{.}
\end{equation*}%
Hence $d( g) =0$ and $g=s( f) $. Because $fg\in \operatorname{MCE}( f,e) $, $f=e\in
E $, which contradicts $f\notin E$. Thus $d( f) \neq d( e) $.
Now since $\left\vert d(f)\right\vert =1=\left\vert d(e)\right\vert $, $d( f) \vee d(
e) =d( f) +d( e) $. So
\begin{equation*}
d( f) +d( g) =d( fg) =d( f) \vee d( e) =d( f) +d( e) 
\end{equation*}
and hence $d( g) =d( e) $ and $\left\vert d(g)\right\vert =1 $ proving the claim.
Thus $\operatorname{Ext}_{\Lambda}(f;E)\in s( f) \operatorname{FE}( \Lambda ^{1}) $.

To show $\prod_{g\in \operatorname{Ext}_{\Lambda }(f;E)}(T_{s( f) }-T_{g}T_{g}^{\ast
})=0$, we prove
\begin{equation*}
\prod_{g\in \operatorname{Ext}_{\Lambda }(f;E)}(T_{s( f) }-T_{g}T_{g}^{\ast
})=\prod_{e\in E}(T_{f}^{\ast }( T_{v}-T_{e}T_{e}^{\ast }) T_{f})=0\text{.}
\end{equation*}%
First by (TCK3), we have
\begin{equation*}
\prod_{e\in E}(T_{f}^{\ast }( T_{v}-T_{e}T_{e}^{\ast }) T_{f})=\prod_{e\in E}%
\Big(T_{s( f) }-\Big(\sum_{(g,e^{\prime }),(g^{\prime \prime },e^{\prime
\prime })\in \Lambda ^{\min }( f,e) }T_{g}( T_{e^{\prime }}^{\ast
}T_{e^{\prime \prime }}) T_{g^{\prime \prime }}^{\ast }\Big)\Big)\text{.}
\end{equation*}%
For $e^{\prime }\neq e^{\prime \prime }$, $d( e^{\prime }) =d( e^{\prime
\prime }) $ and by (TCK3), $T_{e^{\prime }}^{\ast }T_{e^{\prime \prime }}=0$%
. If $e^{\prime }=e^{\prime \prime }$, then $T_{e^{\prime }}^{\ast
}T_{e^{\prime }}=T_{s(e^{\prime })}$. So%
\begin{align*}
\prod_{e\in E}(T_{f}^{\ast }( T_{v}-T_{e}T_{e}^{\ast }) T_{f})& =\prod_{e\in
E}\Big(T_{s( f) }-\Big(\sum_{(g,e^{\prime })\in \Lambda ^{\min }( f,e)
}T_{g}T_{g}^{\ast }\Big)\Big) \\
& =\prod_{e\in E}\prod_{(g,e^{\prime })\in \Lambda ^{\min }( f,e) }(T_{s( f)
}-T_{g}T_{g}^{\ast })=\prod_{g\in \operatorname{Ext}_{\Lambda }(f;E)}(T_{s( f)
}-T_{g}T_{g}^{\ast })\text{.}
\end{align*}%
On the other hand, since $\{ T_{\lambda }T_{\lambda }^{\ast }:\lambda \in
\Lambda \} $ is a commuting family \cite[Lemma 2.7(i)]{RSY04},%
\begin{equation*}
\prod_{e\in E}(T_{f}^{\ast }( T_{v}-T_{e}T_{e}^{\ast }) T_{f})=T_{f}^{\ast }%
\Big(\prod_{e\in E}(T_{v}-T_{e}T_{e}^{\ast })\Big)(T_{f}T_{f}^{\ast
})^{\left\vert E\right\vert -1}T_{f}=0\text{.}
\end{equation*}
\end{proof}

\begin{corollary}
\label{efficient-exhaustive}
Let $v\in \Lambda ^{0}$ and 
$E\in v\operatorname{FE}( \Lambda ^{1}) $. For $\lambda \in \left. r( E) \Lambda
\right\backslash E\Lambda $, the set
\begin{equation*}
\operatorname{Ext}_{\Lambda }( \lambda ;E) :=\{g\in s( \lambda ) \Lambda :\lambda
g\in \operatorname{MCE}( \lambda ,e) \text{ for some }e\in E\}
\end{equation*}%
belongs to $s( \lambda ) \operatorname{FE}( \Lambda ^{1}) $.
\end{corollary}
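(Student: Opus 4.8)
The plan is to follow the template of Proposition \ref{efficient-exhaustive-edge}: invoke \cite[Lemma C.5]{RSY04} to obtain exhaustiveness in $\operatorname{FE}(\Lambda)$, and then supply a short degree computation showing that only edges can occur, so that the set in fact lands in $\operatorname{FE}(\Lambda^1)$. Unlike Proposition \ref{efficient-exhaustive-edge}, no Cuntz-Krieger relation is asserted here, so the argument is purely combinatorial and the calculation with the Toeplitz--Cuntz--Krieger family is not needed.

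First I would observe that $E\in v\operatorname{FE}(\Lambda^1)\subseteq\operatorname{FE}(\Lambda)$ and that $\lambda\in r(E)\Lambda\backslash E\Lambda$, so \cite[Lemma C.5]{RSY04} applies and yields $\operatorname{Ext}_\Lambda(\lambda;E)\in s(\lambda)\operatorname{FE}(\Lambda)$. It then remains only to prove the refinement $\operatorname{Ext}_\Lambda(\lambda;E)\subseteq s(\lambda)\Lambda^1$, after which membership in $s(\lambda)\operatorname{FE}(\Lambda^1)=\{F\in s(\lambda)\operatorname{FE}(\Lambda):F\subseteq\Lambda^1\}$ is immediate. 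This cleanly parallels how the edge version was handled, the difference being that $\lambda$ is now an arbitrary path.

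For the refinement I would take $g\in\operatorname{Ext}_\Lambda(\lambda;E)$, so that $\lambda g\in\operatorname{MCE}(\lambda,e)$ for some $e\in E$; since $E\subseteq\Lambda^1$ I may write $d(e)=e_j$. Comparing degrees gives $d(g)=(d(\lambda)\vee e_j)-d(\lambda)$, and I split into cases on the $j$-th coordinate of $d(\lambda)$. If $d(\lambda)_j\geq 1$ then $d(\lambda)\vee e_j=d(\lambda)$, forcing $d(g)=0$ and $g=s(\lambda)$; but then $\lambda=\lambda g\in\operatorname{MCE}(\lambda,e)$ forces $\lambda(0,e_j)=e\in E$, so $\lambda\in E\Lambda$, contradicting $\lambda\notin E\Lambda$. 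Hence $d(\lambda)_j=0$, whence $d(\lambda)\vee e_j=d(\lambda)+e_j$ and $d(g)=e_j$, so $g\in\Lambda^1$.

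I expect the only genuine subtlety to be the exclusion of the degree-zero case, and this is exactly where the hypothesis $\lambda\notin E\Lambda$ does its work: without it one could have the vertex $s(\lambda)$ lying in $\operatorname{Ext}_\Lambda(\lambda;E)$, which is not an edge and would in any case break membership in $\operatorname{FE}(\Lambda)$ (whose sets exclude their base vertex). Everything else is a routine unwinding of the definitions of $\operatorname{MCE}$, $\operatorname{Ext}_\Lambda$, and the coordinatewise maximum on $\NN^k$, so I would not belabour those steps. As an alternative one could instead induct on $\left\vert d(\lambda)\right\vert$, factoring $\lambda=f\lambda'$ through an initial edge $f$ and applying Proposition \ref{efficient-exhaustive-edge} together with a transitivity identity $\operatorname{Ext}_\Lambda(\lambda;E)=\operatorname{Ext}_\Lambda(\lambda';\operatorname{Ext}_\Lambda(f;E))$, but the direct degree argument above is shorter and avoids having to establish that identity.
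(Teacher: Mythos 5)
Your proof is correct, but it takes a genuinely different route from the paper. The paper deduces the corollary from Proposition \ref{efficient-exhaustive-edge} by induction on $\left\vert d(\lambda)\right\vert$, using the transitivity identity $\operatorname{Ext}_{\Lambda}(\lambda_{1}\lambda_{2};F)=\operatorname{Ext}_{\Lambda}(\lambda_{2};\operatorname{Ext}_{\Lambda}(\lambda_{1};F))$ from \cite[Lemma 4.9]{Si06} to peel $\lambda$ apart one edge at a time; this is exactly the alternative you mention and decline at the end. You instead apply \cite[Lemma C.5]{RSY04} directly to the whole path $\lambda$ (which is legitimate, since its hypothesis is precisely $\lambda\in r(E)\Lambda\backslash E\Lambda$ and $E\in\operatorname{FE}(\Lambda)$) and then redo the degree computation from Proposition \ref{efficient-exhaustive-edge} for an arbitrary path: the case split on $d(\lambda)_{j}\geq 1$ versus $d(\lambda)_{j}=0$ is the correct generalisation of the paper's split on $d(f)=d(e)$ versus $d(f)\neq d(e)$, and you correctly identify that it is the hypothesis $\lambda\notin E\Lambda$ (rather than merely $\lambda\notin E$, the two being equivalent only in the edge case) that kills the degree-zero possibility via $\lambda(0,d(e))=e$. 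What each approach buys: yours is self-contained modulo Lemma C.5, avoids importing Sims's identity, and sidesteps the bookkeeping the paper's induction silently requires (at each step one must check that the shortened path avoids $\operatorname{Ext}_{\Lambda}(f;E)\Lambda$ so the inductive hypothesis applies); the paper's version is shorter on the page because it recycles the edge proposition wholesale and delegates the combinatorics to a cited lemma.
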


\begin{proof}
For $w\in \Lambda ^{0}$, $F\subseteq w\Lambda $, $\lambda _{1}\in w\Lambda $
and $\lambda _{2}\in s(\lambda _{1})\Lambda $, Lemma 4.9 of \cite{Si06}
tells that%
\begin{equation*}
\operatorname{Ext}_{\Lambda }(\lambda _{1}\lambda _{2};F)=\operatorname{Ext}_{\Lambda
}(\lambda _{2};\operatorname{Ext}_{\Lambda }(\lambda _{1};F))\text{.}
\end{equation*}%
Hence by induction on $\left\vert \lambda \right\vert $, $\operatorname{Ext}%
_{\Lambda }( \lambda ;E) \in \operatorname{FE}( \Lambda ^{1}) $ follows from
Proposition \ref{efficient-exhaustive-edge}.
\end{proof}

\begin{proposition}
\label{properties-of-ideal}Let $v\in \Lambda ^{0}$ and $E\in v\operatorname{FE}(
\Lambda ^{1}) $. Suppose that $\{ T_{\lambda }:\lambda \in \Lambda \} $ is a
Toeplitz-Cuntz-Krieger $\Lambda $-family such that $\prod_{g\in
E}(T_{v}-T_{g}T_{g}^{\ast })=0$.  Also suppose that $e\in E$ and  $F\in s( e)
\operatorname{FE}( \Lambda ^{1}) $ satisfies $\prod_{f\in F}(T_{s( e)
}-T_{f}T_{f}^{\ast })=0$. 
Then
\begin{equation*}
E_{F}:=( E\backslash \{ e\} ) \cup \{ ( ef) ( 0,d( f) ) :f\in F\} 
\end{equation*}
belongs to $v\operatorname{FE}( \Lambda ^{1}) $ and $\prod_{g\in
E_{F}}(T_{v}-T_{g}T_{g}^{\ast })=0$.
\end{proposition}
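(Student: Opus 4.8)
The plan is to prove the two assertions separately: membership $E_F\in v\operatorname{FE}(\Lambda^1)$ and the Cuntz--Krieger relation $\prod_{g\in E_F}(T_v-T_gT_g^{\ast})=0$. Throughout I write $g_f:=(ef)(0,d(f))$ for $f\in F$, so that the factorisation property gives $ef=g_fh_f$ with $d(g_f)=d(f)$ and $d(h_f)=d(e)$; then $|d(g_f)|=1$ and $r(g_f)=r(e)=v$, so $g_f\in v\Lambda^1$. The structural half of the membership claim is then immediate: $E_F$ is a finite subset of $v\Lambda^1\setminus\{v\}$, since $E$ and $F$ are finite and each element of $E_F$ is an edge with range $v$. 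The only real content is exhaustiveness.

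For exhaustiveness I would fix $\lambda\in v\Lambda$ and use that $E$ is exhaustive to find $\mu\in E$ with $\Lambda^{\min}(\lambda,\mu)\neq\emptyset$. If $\mu\neq e$ then $\mu\in E\setminus\{e\}\subseteq E_F$ and there is nothing more to do, so the interesting case is $\mu=e$: choose $(\lambda',e')\in\Lambda^{\min}(\lambda,e)$, so $\lambda\lambda'=ee'$. Since $e'\in s(e)\Lambda$ and $F$ is exhaustive in $s(e)\Lambda$, there are $f\in F$ and $(e'',f')\in\Lambda^{\min}(e',f)$ with $e'e''=ff'$. Composing these, $\lambda\lambda'e''=ee'e''=eff'=g_f(h_ff')$, which simultaneously extends $\lambda$ and $g_f$; hence $\Lambda^{\min}(\lambda,g_f)\neq\emptyset$ with $g_f\in E_F$. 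Chaining the common extension of $\lambda$ with $e$ through the exhaustiveness of $F$, and then reading the combined path as an extension of $g_f$ via $ef=g_fh_f$, is the crux here.

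For the Cuntz--Krieger relation I would work with joins of projections. Since $\{T_\lambda T_\lambda^{\ast}\}$ is a commuting family \cite[Lemma~2.7(i)]{RSY04} and $T_gT_g^{\ast}\le T_v$ whenever $g\in v\Lambda$, a short induction on $|S|$ yields $\prod_{g\in S}(T_v-T_gT_g^{\ast})=T_v-\bigvee_{g\in S}T_gT_g^{\ast}$ for finite $S\subseteq v\Lambda^1$. Thus the hypothesis at $E$ says precisely $\bigvee_{g\in E}T_gT_g^{\ast}=T_v$, and the goal becomes $\bigvee_{g\in E_F}T_gT_g^{\ast}=T_v$. It then suffices to prove $T_eT_e^{\ast}\le\bigvee_{f\in F}T_{g_f}T_{g_f}^{\ast}$, because
\[
\bigvee_{g\in E_F}T_gT_g^{\ast}\ \ge\ \Big(\bigvee_{g\in E\setminus\{e\}}T_gT_g^{\ast}\Big)\vee T_eT_e^{\ast}=\bigvee_{g\in E}T_gT_g^{\ast}=T_v,
\]
while the reverse inequality is automatic. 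For the domination I would use that conjugation $P\mapsto T_ePT_e^{\ast}$ by the partial isometry $T_e$ (with initial projection $T_e^{\ast}T_e=T_{s(e)}$) preserves finite joins of subprojections of $T_{s(e)}$; then the relation $\bigvee_{f\in F}T_fT_f^{\ast}=T_{s(e)}$ at $F$ gives $T_eT_e^{\ast}=T_eT_{s(e)}T_e^{\ast}=\bigvee_{f\in F}T_{ef}T_{ef}^{\ast}$, and finally $T_{ef}T_{ef}^{\ast}=T_{g_f}\big(T_{h_f}T_{h_f}^{\ast}\big)T_{g_f}^{\ast}\le T_{g_f}T_{g_f}^{\ast}$.

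I expect the main obstacle to lie in the bookkeeping of the Cuntz--Krieger step rather than in any deep point. The subtlety is that the edges of $F$ may carry different colours, so the projections $T_fT_f^{\ast}$ need not be mutually orthogonal and cannot simply be added; this is exactly why I would route the argument through commuting joins and the join-preserving conjugation $P\mapsto T_ePT_e^{\ast}$ instead of through an orthogonal expansion as in the proof of Proposition~\ref{efficient-exhaustive-edge}. The remaining ingredients — finiteness, the edge-and-range conditions, and the equivalence between the existence of a common extension and $\Lambda^{\min}\neq\emptyset$ — are routine.
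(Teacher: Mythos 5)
Your proof is correct, and its skeleton matches the paper's. The exhaustiveness argument is the same chaining argument the paper uses: extend a minimal common extension $\lambda\lambda'=ee'$ of $\lambda$ and $e$ through the exhaustiveness of $F$ to $\lambda\lambda'e''=eff'$, and read the result as a common extension of $\lambda$ and $g_f=(ef)(0,d(f))$ (the paper splits cases on whether $\Lambda^{\min}(\lambda,e)$ is empty rather than on which $\mu\in E$ witnesses exhaustiveness; this is immaterial). Where you genuinely diverge is in the algebraic half, and only in its machinery. The paper stays at the level of products: it cites \cite[Lemma C.7]{RSY04} for the identity $T_v-T_eT_e^{\ast}=\prod_{f\in F}(T_v-T_{ef}T_{ef}^{\ast})$, substitutes this into the vanishing product over $E$, and then absorbs the extra factors using $T_{ef}T_{ef}^{\ast}=(T_{ef}T_{ef}^{\ast})\,(T_{g_f}T_{g_f}^{\ast})$ together with commutativity of the range projections. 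You instead translate every relation $\prod_{g\in S}(T_v-T_gT_g^{\ast})=0$ into $\bigvee_{g\in S}T_gT_g^{\ast}=T_v$, reduce the claim to the domination $T_eT_e^{\ast}\le\bigvee_{f\in F}T_{g_f}T_{g_f}^{\ast}$, and then prove the key identity $T_eT_e^{\ast}=\bigvee_{f\in F}T_{ef}T_{ef}^{\ast}$ by hand via the join-preserving conjugation $P\mapsto T_ePT_e^{\ast}$; that identity is precisely the content of the cited Lemma C.7, so your route trades a reference for an inline proof. What the paper's version buys is brevity; what yours buys is self-containedness and a cleaner conceptual statement (the relation at $E$ literally says the range projections of $E$ cover $T_v$, and replacing $T_eT_e^{\ast}$ by projections dominating its conjugated pieces can only enlarge the cover), at the modest cost of justifying the lattice manipulations (the product-versus-join formula, monotonicity of joins, and preservation of joins under conjugation) inside the commutative $C^{\ast}$-algebra generated by the commuting projections $T_{\lambda}T_{\lambda}^{\ast}$ — all of which are routine and which you correctly flag. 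Your closing observation about why one cannot orthogonally expand over $F$ (its edges may have different colours) is exactly the right point, and your final absorption step $T_{ef}T_{ef}^{\ast}\le T_{g_f}T_{g_f}^{\ast}$ coincides with the paper's.
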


\begin{proof}
Since $E$ and $F$ are nonempty and finite, 
so is $E_{F}$. To show that $E_{F}$ is exhaustive, take
$\lambda \in v\Lambda $. We give a separate argument for $\Lambda ^{\min
}(\lambda ,e)=\emptyset $ and $\Lambda ^{\min }(\lambda ,e)\neq \emptyset $.
First suppose $\Lambda ^{\min }(\lambda ,e)=\emptyset $. Because $E\in v%
\operatorname{FE}(\Lambda ^{1})$ and $\Lambda ^{\min }(\lambda ,e)=\emptyset $,
there exists $g\in E\backslash \{e\}$ with $\Lambda ^{\min }(\lambda ,g)\neq
\emptyset $. By definition of $E_{F}$, $g\in E_{F}$ as needed. 

Next suppose $\Lambda ^{\min }(\lambda ,e)\neq \emptyset $. Take 
$(\lambda ^{\prime },e^{\prime })\in \Lambda ^{\min }(\lambda ,e)$. 
So $\lambda \lambda ^{\prime }=ee^{\prime }$. Since $F\in r(e^{\prime })\operatorname{FE}%
(\Lambda ^{1})$, there exists $f\in F$ with $\Lambda ^{\min }(e^{\prime
},f)\neq \emptyset $. Take $(e^{\prime \prime },f^{\prime \prime })\in
\Lambda ^{\min }(e^{\prime },f)$. Then $e^{\prime }e^{\prime \prime}=ff^{\prime \prime }$,
$\lambda \lambda ^{\prime }e^{\prime \prime
}=ee^{\prime }e^{\prime \prime }=eff^{\prime \prime }$, and $\Lambda ^{\min
}(\lambda ,ef)\neq \emptyset $. Let $g:=(ef)(0,d(f))$. Then $g\in E_{F}$ and
$\Lambda ^{\min }(\lambda ,g)\neq \emptyset $. So $E_{F}$ is exhaustive and $%
E_{F}\in \operatorname{FE}(\Lambda ^{1})$.

Since $\prod_{f\in F}(T_{s( e) }-T_{f}T_{f}^{\ast })=0$, 
we have $T_{v}-T_{e}T_{e}^{\ast }=\prod_{f\in F}(T_{v}-T_{ef}T_{ef}^{\ast })$
by \cite[Lemma C.7]{RSY04}.
Hence
\begin{equation*}
\prod_{g\in E\backslash \{ e\} }(T_{v}-T_{g}T_{g}^{\ast })\prod_{f\in
F}(T_{v}-T_{ef}T_{ef}^{\ast })=\prod_{g\in E}(T_{v}-T_{g}T_{g}^{\ast })=0%
\text{.}
\end{equation*}%
On the other hand, for $f\in F$, $T_{ef}T_{ef}^{\ast }=(T_{ef}T_{ef}^{\ast
})(T_{( ef) ( 0,d( f) ) }T_{( ef) ( 0,d( f) ) }^{\ast })$. So by (TCK3),
\begin{equation*}
\prod_{g\in E_{F}}(T_{v}-T_{g}T_{g}^{\ast })=\prod_{g\in E\backslash \{ e\}
}(T_{v}-T_{g}T_{g}^{\ast })\prod_{f\in F}((T_{v}-T_{ef}T_{ef}^{\ast
})(T_{v}-T_{( ef) ( 0,d( f) ) }T_{( ef) ( 0,d( f) ) }^{\ast }))=0\text{.}
\end{equation*}
\end{proof}

\begin{definition}
\label{definition-efficient}We call a subset $\mathcal{E}$ of $\operatorname{FE}(
\Lambda ^{1}) $ \emph{efficient} if the following three conditions are satisfied:

\begin{enumerate}
\item[(E1)] if $E,F\in \mathcal{E}$ and $E\subseteq F$, then $E=F$;

\item[(E2)] if $E\in \mathcal{E}$ and $f\in r( E) \Lambda ^{1}\backslash E$,
then there exists $F\in \mathcal{E}$ such that $F\subseteq $ $\operatorname{Ext}%
_{\Lambda }(f;E)$; and

\item[(E3)] if $E\in \mathcal{E}, e\in E$, and $F\in s( e) \mathcal{E}$,
then there exists $G\in \mathcal{E}$ with
\begin{equation*}
G\subseteq ( E\backslash \{ e\} ) \cup \{ ( ef) ( 0,d( f) ) :f\in F\} .
\end{equation*}
\end{enumerate}
\end{definition}

\begin{example}
\label{example-efficient-graph}Suppose that $\Lambda $ is a $1$-graph. Let $%
V\subseteq \Lambda ^{0}$ be a nonempty subset of regular vertices; that is, $%
0<\left\vert v\Lambda ^{1}\right\vert <\infty $ for $v\in V$. Then the set $%
\mathcal{E}:=\{v\Lambda ^{1}:v\in V\}$ is efficient:  
Properties (E1) and (E2) are trivially true.  To show (E3), let $E\in \mathcal{E}$, $e\in E$ and
  $F\in s( e) \mathcal{E} $.  (Since $s(e)$ is a  regular vertex, $s(e)\mathcal{E}$ is nonempty.)
Then \begin{equation*}
( E\backslash \{ e\} ) \cup \{ ( ef) ( 0,d( f) ) :f\in F\} =( E\backslash \{
e\} ) \cup \{ e\} =E.
\end{equation*}%
Choose $G:=E$ and (E3) follows.
So one can translate the results about subsets of regular vertices of \cite{MT04} into
results about efficient sets.
\end{example}

\begin{example}
\label{example-efficient-2-graph}Suppose that $\Lambda $ is a $2$-graph with
the following skeleton:%
\begin{equation*}
\begin{tikzpicture} \node[inner sep=1pt] (v) at (0,0) {$\bullet$};
\node[inner sep=1pt] at (-0.3,0) {$v$}; \path[->,every
loop/.style={looseness=20}] (v) edge[in=-45,out=-135,loop, red, dashed, very
thick] node[pos=0.5, below, black]{$m$} (v); \path[->,every
loop/.style={looseness=20}] (v) edge[in=135,out=45,loop, blue, very thick]
node[pos=0.5, above, black]{$n$} (v);\end{tikzpicture}
\end{equation*}
We show $\{ v\Lambda ^{e_{1}}\} $ is efficient.  Condition  (E1) is trivial.
 Notice that for any $g\in v\Lambda ^{e_{2}}$,  we have $\operatorname{Ext}_{\Lambda
}(g;v\Lambda ^{e_{1}})=v\Lambda ^{e_{1}}$ so (E2) holds. For (E3),  
let $e\in v\Lambda ^{e_{1}}$.  Then
\begin{equation*}
( v\Lambda ^{e_{1}}\backslash \{ e\} ) \cup \{ ( ef) ( 0,d( f) ) :f\in
v\Lambda ^{e_{1}}\} =( v\Lambda ^{e_{1}}\backslash \{ e\} ) \cup \{ e\}
=v\Lambda ^{e_{1}}
\end{equation*}%
since all edges of $v\Lambda ^{e_{1}}$ have the same degree. 
A similar argument shows $\{v\Lambda ^{e_{2}}\}$ and $\{v\Lambda
^{e_{1}}\cup v\Lambda ^{e_{2}}\}$ are also efficient.
\end{example}

\begin{remark}
\label{example-efficient} For a row-finite $k$-graph $\Lambda $ with no sources and a nonempty subset $K$ of $\{1,\ldots ,k\}$,
\begin{equation*}
\mathcal{E}_{K}:=\Big\{\bigcup_{i\in K}v\Lambda ^{e_{i}}:v\in \Lambda ^{0}%
\Big\}\text{ and }\mathcal{E}_{i}:=\{v\Lambda
^{e_{i}}:v\in \Lambda ^{0}\} \text{ for } 1\leq i \leq k
\end{equation*}%
are all efficient.  So, for example,  the sets $\{ v\Lambda ^{e_{1}}:v\in \Lambda ^{0}\} $, $\{ v\Lambda
^{e_{2}}:v\in \Lambda ^{0}\} $ and $\{ v\Lambda ^{e_{1}}\cup v\Lambda
^{e_{2}}:v\in \Lambda ^{0}\} $ are always efficient.  However
 the set $\{ v\Lambda
^{e_{1}},v\Lambda ^{e_{2}}:v\in \Lambda ^{0}\} $ might not be.
Consider the $2$-graph $\Lambda $ with skeleton%
\begin{equation*}
\begin{tikzpicture} \node[inner sep=1pt] (v) at (0,0) {$\bullet$};
\node[inner sep=1pt] at (-0.3,-0.3) {$v$}; \node[inner sep=1pt] (a) at
(1.5,0) {$\bullet$};  \node[inner
sep=1pt] (b) at (0,1.5) {$\bullet$};  \node[inner sep=1pt] (c) at (1.5,1.5) {$\bullet$}; \node[inner
sep=1pt] (d) at (-1.5,0) {$\bullet$}; \node[inner sep=1pt] (e) at (0,-1.5)
{$\bullet$}; \node[inner sep=1pt] (f) at (-1.5,-1.5) {$\bullet$};
\draw[-latex, blue, very thick] (a) edge[out=180, in=0] node[pos=0.5, above,
black]{$e$} (v); \draw[-latex,
blue, very thick] (c) edge[out=180, in=0] node[pos=0.5, above,
black]{$\vdots$} (b); \draw[-latex, blue, very thick] (d) edge[out=0,
in=180](v); \draw[-latex, blue, very thick] (f) edge[out=0,
in=180]node[pos=0.5, below, black]{$\vdots$}(e); \draw[-latex, red,
dashed,very thick] (b) edge[out=270, in=90]node[pos=0.5, right,
black]{$g$}(v); \draw[-latex, red,
dashed,very thick] (c) edge[out=270, in=90]node[pos=0.5, right,
black]{$\dots$}(a); \draw[-latex, red, dashed,very thick] (f) edge[out=90,
in=270]node[pos=0.5, left, black]{$\dots$}(d); \draw[-latex, red,
dashed,very thick] (e) edge[out=90, in=270](v); \node[inner sep=1pt] (a1) at
(2.5,0){}; \draw[-latex, blue, very thick] (a1) edge[out=180, in=0](a);
\node[inner sep=1pt] (b1) at (0,2.5){}; \draw[-latex, red, dashed, very
thick] (b1) edge[out=270, in=90](b); \node[inner sep=1pt] (c1) at
(2.5,1.5){}; \draw[-latex, blue, very thick] (c1) edge[out=180, in=0](c);
\node[inner sep=1pt] (c2) at (1.5,2.5){}; \draw[-latex, red, dashed, very
thick] (c2) edge[out=270, in=90](c); \node[inner sep=1pt] (d1) at
(-2.5,0){}; \draw[-latex, blue, very thick] (d1) edge[out=0, in=180](d);
\node[inner sep=1pt] (e1) at (0,-2.5){}; \draw[-latex, red,dashed, very
thick] (e1) edge[out=90, in=270](e); \node[inner sep=1pt] (f1) at
(-2.5,-1.5){}; \draw[-latex, blue, very thick] (f1) edge[out=0, in=180](f);
\node[inner sep=1pt] (f2) at (-1.5,-2.5){}; \draw[-latex, red,dashed, very
thick] (f2) edge[out=90, in=270](f); \end{tikzpicture}
\end{equation*}%
Then
\begin{equation*}
(v\Lambda ^{e_{1}}\backslash \{e\})\cup \{(ef)(0,d(f)):f\in x\Lambda
^{e_{2}}\}=(v\Lambda ^{e_{1}}\backslash \{e\})\cup \{g\}
\end{equation*} contains neither $v\Lambda ^{e_{1}}$ nor $v\Lambda ^{e_{2}}$, so (E3)
fails.
\end{remark}

Now we study properties of efficient sets. 

\begin{definition}Let $\mathcal{E}\subseteq \operatorname{FE}(\Lambda ^{1})$.  Then %
\begin{equation*}
\min (\mathcal{E}):=\{E\in \mathcal{E}:F\subseteq E\text{ and }F\in \mathcal{%
E}\text{ imply }F=E\},
\end{equation*}%
and the \emph{edge satiation }of $\mathcal{E}$ is
\begin{equation*}
\widehat{\mathcal{E}}:=\{E\in \operatorname{FE}(\Lambda ^{1}):\text{there exists }%
F\in \mathcal{E}\text{ with }F\subseteq E\}.
\end{equation*}%
\end{definition}

\begin{remark}
Using the edge satiation, we provide an alternate characterisation of efficient.
A subset $\mathcal{E}$ of $\operatorname{FE}( \Lambda ^{1}) $ is \emph{efficient} if
it satisfies (E1) and

\begin{enumerate}
\item[(E2$^{\prime }$)] if $E\in \mathcal{E}$ and $f\in r(E)\Lambda
^{1}\backslash E$, then $\operatorname{Ext}_{\Lambda }(f;E)\in \widehat{\mathcal{E}}$%
;

\item[(E3$^{\prime }$)] if $E\in \mathcal{E}$, $e\in E$, and $F\in s(e)%
\mathcal{E}$, then $(E\backslash \{e\})\cup \{(ef)(0,d(f)):f\in F\}\in
\widehat{\mathcal{E}}$.
\end{enumerate}
\end{remark}

\begin{lemma}
\label{efficient-min-e-hat}Suppose that $\mathcal{E}\subseteq \operatorname{FE}%
(\Lambda ^{1})$ is efficient. Then $\mathcal{E}=\min (\widehat{\mathcal{E}})$%
.
\end{lemma}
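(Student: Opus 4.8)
The plan is to prove the two inclusions $\mathcal{E}\subseteq \min(\widehat{\mathcal{E}})$ and $\min(\widehat{\mathcal{E}})\subseteq \mathcal{E}$ separately. The observation driving both is that the only efficiency property I will actually need is (E1): it says precisely that $\mathcal{E}$ is an antichain under set inclusion, and (E2) and (E3) play no role in this lemma. It is also worth recording at the outset the trivial but repeatedly-used fact that $\mathcal{E}\subseteq \widehat{\mathcal{E}}$, since any $E\in\mathcal{E}$ satisfies $E\subseteq E$ with $E\in\mathcal{E}$, hence $E\in\widehat{\mathcal{E}}$.

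For the inclusion $\mathcal{E}\subseteq \min(\widehat{\mathcal{E}})$, I fix $E\in\mathcal{E}$ and verify the two defining requirements of membership in $\min(\widehat{\mathcal{E}})$. Membership $E\in\widehat{\mathcal{E}}$ is the observation above. For minimality I suppose $G\in\widehat{\mathcal{E}}$ satisfies $G\subseteq E$ and aim to conclude $G=E$. By definition of $\widehat{\mathcal{E}}$ there is some $F\in\mathcal{E}$ with $F\subseteq G$, so that $F\subseteq G\subseteq E$ with both $F,E\in\mathcal{E}$. Applying (E1) to $F\subseteq E$ forces $F=E$, and the sandwich $E=F\subseteq G\subseteq E$ then gives $G=E$, as required.

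For the reverse inclusion $\min(\widehat{\mathcal{E}})\subseteq \mathcal{E}$, I fix $E\in\min(\widehat{\mathcal{E}})$. Since $E\in\widehat{\mathcal{E}}$, there is $F\in\mathcal{E}$ with $F\subseteq E$. But $F\in\mathcal{E}\subseteq\widehat{\mathcal{E}}$, so $F$ is an element of $\widehat{\mathcal{E}}$ sitting below $E$; the minimality of $E$ in $\widehat{\mathcal{E}}$ therefore forces $F=E$, whence $E=F\in\mathcal{E}$.

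I do not anticipate a genuine obstacle here: the statement is essentially the general fact that an antichain coincides with the collection of minimal elements of the upward closure it generates, and (E1) supplies exactly the antichain hypothesis. The only points requiring care are keeping straight which collection ($\mathcal{E}$ or $\widehat{\mathcal{E}}$) each set lives in at each step, and remembering that membership in $\widehat{\mathcal{E}}$ is witnessed by some (not necessarily unique) element of $\mathcal{E}$ contained in it, which is what lets (E1) be invoked in both directions.
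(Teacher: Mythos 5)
Your proof is correct and is essentially identical to the paper's: both directions use the same witness-chasing argument, invoking (E1) for the inclusion $\mathcal{E}\subseteq\min(\widehat{\mathcal{E}})$ and minimality for the reverse. Your side remark that only (E1) is needed (and that the lemma is really the statement that an antichain equals the set of minimal elements of its upward closure) is accurate and also holds for the paper's proof.
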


\begin{proof}
To show ${\mathcal{E}}\subseteq \min (\widehat{{\mathcal{E}}})$, take $E\in {%
\mathcal{E}}$. Then $E\in \widehat{{\mathcal{E}}}$. To show $E\in \min (%
\widehat{{\mathcal{E}}})$, take $F\in \widehat{{\mathcal{E}}}$ such that $%
F\subseteq E$. Since $F\in \widehat{{\mathcal{E}}}$, there exists $F^{\prime
}\in {\mathcal{E}}$ such that $F^{\prime }\subseteq F$. So $F^{\prime
}\subseteq F\subseteq E$ and by (E1), $F^{\prime }=E$ and $E=F$. Therefore $%
E\in $ $\min (\widehat{{\mathcal{E}}})$ and ${\mathcal{E}}\subseteq \min (%
\widehat{{\mathcal{E}}})$.

To show $\min (\widehat{{\mathcal{E}}})\subseteq {\mathcal{E}}$, take $E\in
\min (\widehat{{\mathcal{E}}})$. Then $E\in \widehat{{\mathcal{E}}}$ and
there exists $F\in {\mathcal{E}}$ with $F\subseteq E$. So $F\in \widehat{{%
\mathcal{E}}}$. Since $F\subseteq E$ and $E\in \min (\widehat{{\mathcal{E}}}%
) $, $F=E$ and $E\in {\mathcal{E}}$. So $\min (\widehat{{\mathcal{E}}}%
)\subseteq {\mathcal{E}}$.
\end{proof}

The next Proposition shows that the edge satiation gives the same relative Cuntz-Krieger algebra.
\begin{proposition}
\label{cRK-family-without-efficient}Let $\Lambda $ be a
finitely aligned $k$-graph.  Suppose that $\mathcal{E}\subseteq \operatorname{%
FE}(\Lambda ^{1})$. Then $C^{\ast }(\Lambda ;\mathcal{E})=C^{\ast }(\Lambda ;%
\widehat{\mathcal{E}})$.
\end{proposition}

\begin{proof}
Since $\mathcal{E}\subseteq \widehat{\mathcal{E}}$, a relative Cuntz-Krieger
$(\Lambda ;\widehat{\mathcal{E}})$-family is a relative Cuntz-Krieger $%
(\Lambda ;\mathcal{E})$-family. Now suppose that $\{S_{\lambda }:\lambda \in
\Lambda \}$ is a relative Cuntz-Krieger $(\Lambda ;\mathcal{E})$-family. For
$E\in \widehat{\mathcal{E}}$, there exists $F\in \mathcal{E}$ with $%
F\subseteq E$, so $\prod_{e\in F}(S_{r(E)}-S_{e}S_{e}^{\ast })=0$ and
\begin{equation*}
\prod_{e\in E}(S_{r(E)}-S_{e}S_{e}^{\ast })=\prod_{e\in
F}(S_{r(E)}-S_{e}S_{e}^{\ast })\prod_{e\in \left. E\right\backslash
F}(S_{r(E)}-S_{e}S_{e}^{\ast })=0\text{.}
\end{equation*}%
Thus $\{S_{\lambda }:\lambda \in \Lambda \}$ is also a relative
Cuntz-Krieger ($\Lambda ;\widehat{\mathcal{E}})$-family. The universal
property of $C^{\ast }(\Lambda ;\mathcal{E})$ and $C^{\ast }(\Lambda ;%
\widehat{\mathcal{E}})$ implies the two algebras coincide.
\end{proof}

\section{$\mathcal{E}$-boundary paths}

\label{Section-boundary-paths}

In this section, we discuss $\mathcal{E}$-boundary paths and investigate
properties of relative Cuntz-Krieger algebras (Proposition \ref%
{properties-of-relative-CK-algebra}).
For $k\in
\mathbb{N}$ and $m\in ( \mathbb{N\cup }\{ \infty \} ) ^{k}$, $\Omega _{k,m}$
is the $k$-graph which has vertices $\{ n\in \mathbb{N}%
^{k}:n\leq m\} $, morphisms $\{ ( n_{1},n_{2}) :n_{1},n_{2}\in \mathbb{N}%
^{k},n_{1}\leq n_{2}\leq m\} $, degree map $d( ( n_{1},n_{2}) ) =n_{2}-n_{1}$
and range and source maps $r( ( n_{1},n_{2}) ) =n_{1},s( ( n_{1},n_{2}) )
=n_{2}$ (see \cite[Section 2]{RSY03}).

\begin{definition}
\label{compatible-boundary-paths}Suppose that $\Lambda $ is a finitely
aligned $k$-graph and that ${\mathcal{E}}\subseteq \operatorname{FE}( \Lambda ^{1}) $
is efficient. A path $x:\Omega _{k,m}\rightarrow \Lambda $ is an ${\mathcal{E}%
}$\emph{-boundary path} of $\Lambda $ if for $n\in \mathbb{N}^{k}$ such that
$n\leq m$, and $E\in x( n) {\mathcal{E}}$, there exists $e\in E$ such that $%
x( n,n+d( e) ) =e$. We denote the collection of all ${\mathcal{E}}$-boundary
paths of $\Lambda $ by $\partial ( \Lambda ;{\mathcal{E}}) $. We write $d(
x) $ for $m$ and $r( x) $ for $x( 0) $.
\end{definition}

The next two lemmas use similar arguments to
Lemma 4.4 and Lemma 4.7 of \cite{Si06} (so we omit the proofs).

\begin{lemma}
\label{compatible-boundary-path-translation}Let $\Lambda $ be a
finitely aligned $k$-graph and ${\mathcal{E}}\subseteq \operatorname{FE}(
\Lambda ^{1}) $ be efficient. Suppose that $x\in \partial ( \Lambda ;{%
\mathcal{E}}) .$
\end{lemma}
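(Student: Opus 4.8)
The statement being proved is the translation invariance of $\mathcal{E}$-boundary paths: writing $m := d(x)$, for any $n \in \mathbb{N}^{k}$ with $n \le m$ the shifted map $y : \Omega_{k,m-n} \to \Lambda$ defined by $y(p,q) := x(n+p, n+q)$ is again an $\mathcal{E}$-boundary path, with $d(y) = m-n$ and $r(y) = x(n)$. The plan is to verify this directly against Definition \ref{compatible-boundary-paths}: first check that $y$ is a legitimate path, then check the boundary condition vertex by vertex.

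First I would confirm that $y$ is a degree-preserving functor $\Omega_{k,m-n} \to \Lambda$. Whenever $p \le q \le m-n$ we have $n+p \le n+q \le m$, so each $y(p,q) = x(n+p,n+q)$ is defined; functoriality and preservation of $d$, $r$, $s$ are then inherited from $x$ via the additive structure of $\mathbb{N}^{k}$ (for example $r(y(p,q)) = r(x(n+p,n+q)) = x(n+p) = y(p)$, and composability is preserved because $x$ is itself a functor). In particular $r(y) = y(0) = x(n)$ and $d(y) = m-n$, as claimed.

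The substantive step is the boundary condition. I would take $p \in \mathbb{N}^{k}$ with $p \le m-n$ and $E \in y(p)\mathcal{E}$. Since $y(p) = x(n+p)$, this reads $E \in x(n+p)\mathcal{E}$, and moreover $n+p \le m$. Because $x \in \partial(\Lambda;\mathcal{E})$, Definition \ref{compatible-boundary-paths} applied at the vertex $n+p$ supplies an $e \in E$ with $x(n+p,\, n+p+d(e)) = e$. Translating back, $y(p,\, p+d(e)) = x(n+p,\, n+p+d(e)) = e$, which is exactly the condition required of $y$ at $p$; hence $y \in \partial(\Lambda;\mathcal{E})$.

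I expect no real obstacle here, which is why the authors (following Lemma 4.4 of \cite{Si06}) omit the argument: the defining property of an $\mathcal{E}$-boundary path is local at each vertex $x(n)$, and it therefore survives the shift verbatim. The only point needing a little care is the indexing bookkeeping — tracking the offset $n+p$ and the domain $\Omega_{k,m-n}$ so that every evaluation stays within the allowed range $\le m$ — but this is routine once the functoriality of $y$ has been recorded.
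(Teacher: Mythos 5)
The lemma you set out to prove has \emph{two} assertions (the enumerate following the lemma header is part of its content): (a) for $n\leq d(x)$ the shifted path $x(n,d(x))$ lies in $\partial(\Lambda;\mathcal{E})$, and (b) for $\lambda\in\Lambda r(x)$ the path $\lambda x$ lies in $\partial(\Lambda;\mathcal{E})$. Your argument is a correct and complete proof of (a), and it is exactly the routine shift argument the paper has in mind when it omits the proof by reference to Lemma 4.4 of \cite{Si06}. But you never address (b), which is the substantive half of the lemma and the reason the paper also cites Lemma 4.7 of \cite{Si06}. A telling symptom: your proof of (a) never uses the hypothesis that $\mathcal{E}$ is efficient, whereas (b) genuinely needs it.

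Part (b) is not ``local at each vertex of $x$'': the boundary condition for $\lambda x$ must be checked at every $n\leq d(\lambda x)$, and when $n\not\geq d(\lambda)$ the vertex $(\lambda x)(n)$ sits in the interior of $\lambda$, where the hypothesis on $x$ says nothing directly. Concretely, given $E\in(\lambda x)(n)\mathcal{E}$, set $\mu:=(\lambda x)(n,\,n\vee d(\lambda))$. If $\mu\in E\Lambda$ one reads off the required edge immediately; otherwise one must transport $E$ down to $s(\mu)=x\bigl((n\vee d(\lambda))-d(\lambda)\bigr)$, where the boundary property of $x$ can be invoked. This is done by factoring $\mu$ into edges and iterating (E2) (together with the factorisation $\operatorname{Ext}_{\Lambda}(\lambda_{1}\lambda_{2};F)=\operatorname{Ext}_{\Lambda}(\lambda_{2};\operatorname{Ext}_{\Lambda}(\lambda_{1};F))$ used in Corollary \ref{efficient-exhaustive}) to produce some $F\in\mathcal{E}$ with $F\subseteq\operatorname{Ext}_{\Lambda}(\mu;E)$; the boundary condition of $x$ then yields $f\in F$ with $x(n',n'+d(f))=f$, and since $\mu f\in\operatorname{MCE}(\mu,e)$ for some $e\in E$, one concludes $(\lambda x)(n,n+d(e))=e$. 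Without this step (or an equivalent one) the lemma is only half proved, and it is precisely the half on which later results, such as Lemma \ref{compatible-boundary-path-non-empty} and Proposition \ref{properties-of-relative-CK-algebra}, rely.
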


\begin{enumerate}
\item[(a)] If $n\in \mathbb{N}^{k}$ with $n\leq d( x) $, then $x( n,d( x) )
\in \partial ( \Lambda ;{\mathcal{E}}) $.

\item[(b)] If $\lambda \in \Lambda r( x) $, then $\lambda x\in \partial (
\Lambda ;{\mathcal{E}}) $.
\end{enumerate}

\begin{lemma}
\label{compatible-boundary-path-non-empty}Let $\Lambda $ be a
finitely aligned $k$-graph, ${\mathcal{E}}$ be efficient and $v\in
\Lambda ^{0}$. Then $v\partial ( \Lambda ;\mathcal{E}) \neq \emptyset $ and
for $E\in v\operatorname{FE}( \Lambda ^{1}) \backslash \widehat{{\mathcal{E}}}$, $%
\left. v\partial ( \Lambda ;{\mathcal{E}}) \right\backslash E\partial (
\Lambda ;{\mathcal{E}}) \neq \emptyset $.
\end{lemma}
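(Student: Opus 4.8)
The plan is to prove the two assertions by explicit constructions of $\mathcal{E}$-boundary paths, using the greedy ``extend-to-satisfy'' method together with the translation results of Lemma \ref{compatible-boundary-path-translation}. For the second assertion I first record the reduction that makes the statement tractable: since every element of $E$ is an edge, an $\mathcal{E}$-boundary path $x$ with $r(x)=v$ lies in $E\partial(\Lambda;\mathcal{E})$ if and only if one of its \emph{initial} edges lies in $E$, that is, $x(0,d(e))=e$ for some $e\in E$. (Indeed, if $x(0,d(e))=e$ then $x=e\,x(d(e),d(x))$ and $x(d(e),d(x))\in\partial(\Lambda;\mathcal{E})$ by Lemma \ref{compatible-boundary-path-translation}(a).) Thus it suffices to produce a single $x\in v\partial(\Lambda;\mathcal{E})$ none of whose initial edges belongs to $E$.

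For the existence statement $v\partial(\Lambda;\mathcal{E})\neq\emptyset$, I would build an increasing sequence $v=\lambda_0\le\lambda_1\le\cdots$ in $v\Lambda$ and take $x$ to be the resulting path of degree $m:=\sup_j d(\lambda_j)$. Fix a bookkeeping enumeration of all pairs $(n,F)$ with $n\in\mathbb{N}^k$ and $F\in\mathcal{E}$ in which each pair occurs infinitely often. At stage $j$, if $n_j\le d(\lambda_{j-1})$, $F_j\in \lambda_{j-1}(n_j)\mathcal{E}$, and no edge of $F_j$ has yet been traversed from the vertex $x(n_j)$, I use exhaustiveness: writing $\sigma:=\lambda_{j-1}(n_j,d(\lambda_{j-1}))$, there is $e\in F_j$ with $\Lambda^{\min}(\sigma,e)\neq\emptyset$; choosing $(\sigma',e')\in\Lambda^{\min}(\sigma,e)$ and setting $\lambda_j:=\lambda_{j-1}\sigma'$ forces $x(n_j,n_j+d(e))=(\sigma\sigma')(0,d(e))=e\in F_j$. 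Otherwise set $\lambda_j:=\lambda_{j-1}$. Finite alignment guarantees the relevant $\Lambda^{\min}$-sets are available, and the fairness of the enumeration guarantees that for every $n\le m$ and every $F\in x(n)\mathcal{E}$ some edge of $F$ is eventually traversed, so the limit path $x$ is an $\mathcal{E}$-boundary path with $r(x)=v$.

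For the separation statement I would run the same construction from $v$, but refuse ever to traverse an edge of $E$ as an initial edge at $v$. The only demands that can create an initial edge at $v$ are those of the form $(0,F)$ with $F\in v\mathcal{E}$, so these are the steps that require care; demands at vertices $x(n)$ with $n\neq 0$ only lengthen the path beyond $v$ and are handled exactly as above. Since $E\notin\widehat{\mathcal{E}}$, no element of $\mathcal{E}$ is contained in $E$; in particular $F\setminus E\neq\emptyset$ for each $F\in v\mathcal{E}$. The aim is therefore to satisfy each such $F$ using an edge of $F\setminus E$, while keeping all the chosen initial edges mutually compatible. To organise this I would maintain along the construction the finite exhaustive set obtained from $E$ by the extension operations: starting from $E\in v\operatorname{FE}(\Lambda^1)\setminus\widehat{\mathcal{E}}$, each time the frontier advances I replace the current set by its $\operatorname{Ext}_\Lambda$-image (Proposition \ref{efficient-exhaustive-edge}, Corollary \ref{efficient-exhaustive}) and combine across edges by the $E_F$-construction (Proposition \ref{properties-of-ideal}), using the reformulated axioms (E2$'$) and (E3$'$) to verify at each step that the set so obtained still does not lie in $\widehat{\mathcal{E}}$. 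This invariant is what licenses the choice of a satisfying edge outside $E$ at every stage, and hence the production of a boundary path from $v$ whose initial edges all avoid $E$; alternatively, once the initial data at $v$ has been fixed one may splice on a boundary path using Lemma \ref{compatible-boundary-path-translation}(b).

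The main obstacle is precisely this last invariant: a naive greedy choice can paint itself into a corner, committing an initial edge of one colour that leaves some later $F\in v\mathcal{E}$ satisfiable only by edges of $E$. Ruling this out is equivalent to the contrapositive assertion that if \emph{every} $x\in v\partial(\Lambda;\mathcal{E})$ has an initial edge in $E$ then $E\in\widehat{\mathcal{E}}$, and it is here that efficiency is indispensable: one must show that the extended obstructing sets assembled via Propositions \ref{efficient-exhaustive-edge} and \ref{properties-of-ideal} cannot all be forced inside $E$ without some element of $\mathcal{E}$ already sitting inside $E$. This is the step that mirrors the argument of \cite[Lemma 4.7]{Si06}; finite alignment enters throughout to keep every $\Lambda^{\min}$ and $\operatorname{MCE}$ finite, so that the bookkeeping and the passage to the limit are well defined.
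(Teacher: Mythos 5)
Your reduction of the second assertion (produce $x\in v\partial(\Lambda;\mathcal{E})$ none of whose initial edges lies in $E$) is correct, and your fair-enumeration construction of a limit path handles the first assertion adequately; this is the same strategy as the proof the paper omits, namely that of \cite[Lemma 4.7]{Si06}. But for the second assertion there is a genuine gap, and you name it yourself: the whole content of the lemma is the ``invariant'' you defer --- that at every stage one can satisfy the current demand while keeping the evolving obstruction set outside $\widehat{\mathcal{E}}$ --- and this is never proved; everything else is bookkeeping. Moreover, invoking (E2$'$) and (E3$'$) ``to verify at each step that the set so obtained still does not lie in $\widehat{\mathcal{E}}$'' cannot work as stated: those axioms only produce conclusions of the form ``such-and-such a set \emph{is} in $\widehat{\mathcal{E}}$'', so they give no direct certificate that a set is \emph{not} in $\widehat{\mathcal{E}}$; what is needed is an argument by contradiction. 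A secondary error: demands at positions $n\neq 0$ are \emph{not} ``handled exactly as above'', because extending the path at its frontier creates new initial edges of $x$ at $v$ in coordinate directions not yet traversed, and these could land in $E$. This is exactly why the invariant must be carried along the whole path (e.g.\ maintain $\lambda_j\notin E\Lambda$ and $\operatorname{Ext}_\Lambda(\lambda_j;E)\notin\widehat{\mathcal{E}}$), not merely monitored at the demands based at $v$.

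The missing step, where efficiency actually enters, is the following claim: if $G\in w\operatorname{FE}(\Lambda^1)\setminus\widehat{\mathcal{E}}$ and $F\in w\mathcal{E}$, then there exists $f\in F\setminus G$ with $\operatorname{Ext}_\Lambda(f;G)\notin\widehat{\mathcal{E}}$. To prove it, suppose instead that for every $f\in F\setminus G$ there is $F_f\in\mathcal{E}$ with $F_f\subseteq\operatorname{Ext}_\Lambda(f;G)$. For such $f$ and any $g\in\operatorname{Ext}_\Lambda(f;G)$ we have $fg\in\operatorname{MCE}(f,e)$ for some $e\in G$; since $f\notin G$, the degree computation in Proposition \ref{efficient-exhaustive-edge} forces $d(e)\neq d(f)$, hence $d(g)=d(e)$ and $(fg)(0,d(g))=e\in G$. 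Thus each application of (E3) --- to the current element of $\mathcal{E}$, one of its members $f$ lying in $F\setminus G$, and $F_f$ --- yields a new element of $\mathcal{E}$ contained in $(\text{previous set}\setminus\{f\})\cup G$, whose part outside $G$ has strictly decreased. Since $F$ is finite, finitely many applications produce an element of $\mathcal{E}$ contained in $G$, i.e.\ $G\in\widehat{\mathcal{E}}$, a contradiction. With this claim your greedy scheme does go through: pull the demand $F\in x(n)\mathcal{E}$ back to the frontier via $\operatorname{Ext}_\Lambda(\sigma;F)$, pick an element of $\mathcal{E}$ inside it, apply the claim with $G=\operatorname{Ext}_\Lambda(\lambda_j;E)$ to choose the extending edge $g$, and note that $\operatorname{Ext}_\Lambda(\lambda_jg;E)=\operatorname{Ext}_\Lambda(g;\operatorname{Ext}_\Lambda(\lambda_j;E))$ preserves the invariant. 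Without it, what you have written is a plan rather than a proof.
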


Now we give a concrete example of a relative Cuntz-Krieger $( \Lambda ;\mathcal{E}) $%
-family. We use this family to prove Proposition \ref%
{properties-of-relative-CK-algebra}, which establishes properties of the
universal relative Cuntz-Krieger $( \Lambda ;\mathcal{E}) $-family.

\begin{example}
For a finitely aligned $k$-graph $\Lambda $ and an efficient set ${\mathcal{%
E\subseteq }}\operatorname{FE}(\Lambda ^{1})$, we define a partial isometries $%
\{S_{\lambda }^{{\mathcal{E}}}:\lambda \in \Lambda \}\subseteq \mathcal{B}%
(l^{2}(\partial (\Lambda ;{\mathcal{E}})))$ by%
\begin{equation*}
S_{\lambda }^{{\mathcal{E}}}(e_{x}):=%
\begin{cases}
e_{\lambda x} & \text{if }s(\lambda )=r(x)\text{,} \\
0 & \text{otherwise.}%
\end{cases}%
\end{equation*}%
Then an argument similar to the proof of \cite[Lemma 4.6]{Si06} 
shows $\{S_{\lambda }^{{\mathcal{E}}}:\lambda \in \Lambda \}$ is a relative
Cuntz-Krieger $(\Lambda ;{\mathcal{E}})$-family. We call this family the ${\mathcal{E}}$\emph{%
-boundary path representation of} $C^{\ast }(\Lambda ;\mathcal{E})$.
\end{example}

\begin{proposition}
\label{properties-of-relative-CK-algebra}Let $\Lambda $ be a
finitely aligned $k$-graph and $\mathcal{E}\subseteq \operatorname{FE}(\Lambda
^{1})$ be efficient. Suppose that $\{s_{\lambda }^{\mathcal{E}}:\lambda \in
\Lambda \}$ is the universal relative Cuntz-Krieger $(\Lambda ;\mathcal{E})$%
-family.

\begin{enumerate}
\item\label{it1:propsofrel} For $v\in \Lambda ^{0}$, $s_{v}^{\mathcal{E}}\neq 0$.

\item\label{it2:propsofrel} For $v\in \Lambda ^{0}$ and $E\subseteq v\Lambda ^{1}$, $E\in
\widehat{\mathcal{E}}$ if and only if $\prod_{e\in E}(s_{v}^{\mathcal{E}%
}-s_{e}^{\mathcal{E}}s_{e}^{\mathcal{E}\ast })=0$.
\end{enumerate}
\end{proposition}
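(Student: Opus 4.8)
The plan is to exploit the concrete $\mathcal{E}$-boundary path representation $\{S_{\lambda}^{\mathcal{E}}\}$ on $\ell^2(\partial(\Lambda;\mathcal{E}))$ introduced just above, together with the universal property, to transfer computations from the abstract algebra $C^{\ast}(\Lambda;\mathcal{E})$ to concrete operators. For part \eqref{it1:propsofrel}, I would invoke Lemma~\ref{compatible-boundary-path-non-empty}, which guarantees $v\partial(\Lambda;\mathcal{E})\neq\emptyset$: picking any $x\in v\partial(\Lambda;\mathcal{E})$, the operator $S_v^{\mathcal{E}}$ fixes the basis vector $e_x$, so $S_v^{\mathcal{E}}\neq 0$. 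Since the universal family maps onto this representation via the homomorphism $\pi_S$ with $\pi_S(s_v^{\mathcal{E}})=S_v^{\mathcal{E}}\neq 0$, it follows that $s_v^{\mathcal{E}}\neq 0$.

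For the ``only if'' direction of part \eqref{it2:propsofrel}, I would argue directly in the universal algebra. Given $E\in\widehat{\mathcal{E}}$ with $E\subseteq v\Lambda^1$, by definition of the edge satiation there is some $F\in\mathcal{E}$ with $F\subseteq E$. Since $\{s_{\lambda}^{\mathcal{E}}\}$ is a relative Cuntz-Krieger $(\Lambda;\mathcal{E})$-family, the (CK) relation holds at $F$, giving $\prod_{e\in F}(s_v^{\mathcal{E}}-s_e^{\mathcal{E}}s_e^{\mathcal{E}\ast})=0$. Factoring the product over $E$ as a product over $F$ times a product over $E\setminus F$ (the factors commute by \cite[Lemma 2.7(i)]{RSY04}) yields $\prod_{e\in E}(s_v^{\mathcal{E}}-s_e^{\mathcal{E}}s_e^{\mathcal{E}\ast})=0$. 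This is exactly the computation already carried out in the proof of Proposition~\ref{cRK-family-without-efficient}.

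For the contrapositive of the ``if'' direction, suppose $E\subseteq v\Lambda^1$ with $E\notin\widehat{\mathcal{E}}$; I must produce a nonzero vector annihilated by none of the computation, i.e.\ show $\prod_{e\in E}(s_v^{\mathcal{E}}-s_e^{\mathcal{E}}s_e^{\mathcal{E}\ast})\neq 0$. Here I would apply the second conclusion of Lemma~\ref{compatible-boundary-path-non-empty}: since $E\in v\operatorname{FE}(\Lambda^1)\setminus\widehat{\mathcal{E}}$, the set $v\partial(\Lambda;\mathcal{E})\setminus E\partial(\Lambda;\mathcal{E})$ is nonempty, so I may choose a boundary path $x\in v\partial(\Lambda;\mathcal{E})$ with $x\notin e\partial(\Lambda;\mathcal{E})$ for every $e\in E$. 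I would then compute, in the boundary path representation, that $S_e^{\mathcal{E}}S_e^{\mathcal{E}\ast}(e_x)=0$ for each such $e$ (because $x$ does not begin with $e$), so that $(S_v^{\mathcal{E}}-S_e^{\mathcal{E}}S_e^{\mathcal{E}\ast})(e_x)=e_x$ for every $e\in E$, and hence $\prod_{e\in E}(S_v^{\mathcal{E}}-S_e^{\mathcal{E}}S_e^{\mathcal{E}\ast})(e_x)=e_x\neq 0$. Pulling back through $\pi_S$ shows the product in the universal algebra is nonzero, completing the argument.

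The main obstacle I anticipate is the edge-case in the ``if'' direction when $E$ lies in $v\operatorname{FE}(\Lambda^1)$ versus when it is not exhaustive at all: Lemma~\ref{compatible-boundary-path-non-empty} is stated only for $E\in v\operatorname{FE}(\Lambda^1)\setminus\widehat{\mathcal{E}}$, so I should first check that if $E\subseteq v\Lambda^1$ is \emph{not} exhaustive then there is trivially a boundary path $x\in v\partial(\Lambda;\mathcal{E})$ avoiding all $e\in E$ (one extends a path witnessing non-exhaustiveness to a boundary path using Lemma~\ref{compatible-boundary-path-translation}), so that the same nonvanishing conclusion holds. Handling these two sub-cases uniformly, and verifying that $S_e^{\mathcal{E}}S_e^{\mathcal{E}\ast}$ acts as the projection onto boundary paths beginning with $e$, is the only place requiring genuine care.
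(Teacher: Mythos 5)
Your proposal is correct, and for part (\ref{it1:propsofrel}), for the implication $E\in\widehat{\mathcal{E}}\Rightarrow\prod_{e\in E}(s_v^{\mathcal{E}}-s_e^{\mathcal{E}}s_e^{\mathcal{E}\ast})=0$, and for the sub-case $E\in\operatorname{FE}(\Lambda^1)\setminus\widehat{\mathcal{E}}$, it is exactly the paper's argument: the $\mathcal{E}$-boundary path representation together with Lemma \ref{compatible-boundary-path-non-empty}. Where you genuinely diverge is the sub-case in which $E$ is not exhaustive. You take a path $\lambda\in v\Lambda$ with $\Lambda^{\min}(\lambda,e)=\emptyset$ for all $e\in E$, extend it to an $\mathcal{E}$-boundary path $x=\lambda y$ via Lemmas \ref{compatible-boundary-path-non-empty} and \ref{compatible-boundary-path-translation}(b), and check $x\notin E\partial(\Lambda;\mathcal{E})$; the one verification needed here is that if $x(0,d(e))=e$ then $x(0,d(\lambda)\vee d(e))\in\operatorname{MCE}(\lambda,e)$, contradicting $\Lambda^{\min}(\lambda,e)=\emptyset$, after which the same fixed-basis-vector computation as in the exhaustive sub-case finishes the proof. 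The paper instead stays in the universal algebra: it picks a witness $g$ with $\operatorname{Ext}_{\Lambda}(g;E)=\emptyset$, notes $s_g^{\mathcal{E}\ast}s_e^{\mathcal{E}}=0$ for all $e\in E$ and $s_g^{\mathcal{E}}s_g^{\mathcal{E}\ast}\neq 0$ (using part (\ref{it1:propsofrel})), and concludes $s_g^{\mathcal{E}}s_g^{\mathcal{E}\ast}\prod_{e\in E}(s_v^{\mathcal{E}}-s_e^{\mathcal{E}}s_e^{\mathcal{E}\ast})=s_g^{\mathcal{E}}s_g^{\mathcal{E}\ast}\neq 0$. Notably, your route sidesteps a slip in the paper: the paper asserts the witness $g$ may be chosen in $v\Lambda^{1}$, i.e.\ to be an edge, and this is false in general. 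For example, in the $2$-graph consisting of a single commuting square $r_1b_w=b_vr_x$ with $r(r_1)=r(b_v)=v$, together with one further red edge $r_2$ with $r(r_2)=s(r_1)$ whose source receives no blue edge, the set $E=\{b_v\}$ is not exhaustive (the path $r_1r_2$ admits no common extension with $b_v$, since that would require a blue edge with range $s(r_2)$), yet both edges $b_v,r_1\in v\Lambda^1$ have nonempty $\operatorname{Ext}_{\Lambda}(\,\cdot\,;E)$. The paper's algebraic computation survives because it nowhere uses that $d(g)$ is a generator of $\mathbb{N}^k$ --- one simply takes $g\in v\Lambda$ --- so the slip is cosmetic, but your argument, which only ever needs a path witness, avoids the issue entirely. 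In exchange, the paper's version of this sub-case is independent of the boundary-path machinery, whereas yours buys uniformity: both sub-cases of the ``if'' direction are handled by exhibiting a single basis vector $e_x$ fixed by every factor of the product.
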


\begin{proof}
We use $\{S_{\lambda }^{{\mathcal{E}}}:\lambda \in
\Lambda \}$ the ${\mathcal{E}}$-boundary path representation of $C^{\ast
}(\Lambda ;\mathcal{E})$. For \eqref{it1:propsofrel} take $v\in \Lambda ^{0}$ and $x\in
v\partial ( \Lambda ;\mathcal{E}) $. Then $S_{v}^{{\mathcal{E}}}( e_{x})
=e_{x}\neq 0$. Therefore $S_{v}^{{\mathcal{E}}}$ is nonzero and the
universal property of $C^{\ast }(\Lambda ;\mathcal{E})$ shows that $s_{v}^{{%
\mathcal{E}}}$ is nonzero.

For \eqref{it2:propsofrel}, take $v\in \Lambda ^{0}$ and $E\subseteq v\Lambda ^{1}$. Suppose $%
E\notin \widehat{\mathcal{E}}$. We give separate arguments for $E\notin
\operatorname{FE}( \Lambda ^{1}) $ and $E\in \operatorname{FE}( \Lambda ^{1}) \backslash
\widehat{\mathcal{E}}$. First suppose $E\notin \operatorname{FE}( \Lambda ^{1}) $. Then
there exists $g\in v\Lambda ^{1}$ with $\operatorname{Ext}_{\Lambda }(g;E)=\emptyset
$. For $e\in E$, we have $\Lambda ^{\min }(g,e)=\emptyset $ and $s_{g}^{%
\mathcal{E}\ast }s_{e}^{\mathcal{E}}=0$. On the other hand, by part \eqref{it1:propsofrel}, $%
s_{g}^{\mathcal{E}\ast }s_{g}^{\mathcal{E}}=s_{s( g) }^{\mathcal{E}}\neq 0$
and $s_{g}^{\mathcal{E}}\neq 0$. Since$\ s_{g}^{\mathcal{E}}s_{g}^{\mathcal{E%
}\ast }s_{g}^{\mathcal{E}}=s_{g}^{\mathcal{E}}$ is nonzero, $s_{g}^{\mathcal{%
E}}s_{g}^{\mathcal{E}\ast }\neq 0$. Thus%
\begin{equation*}
s_{g}^{\mathcal{E}}s_{g}^{\mathcal{E}\ast }\prod_{e\in E}(s_{v}^{\mathcal{E}%
}-s_{e}^{\mathcal{E}}s_{e}^{\mathcal{E}\ast })=s_{g}^{\mathcal{E}}s_{g}^{%
\mathcal{E}\ast }\neq 0
\end{equation*}%
and hence $\prod_{e\in E}(s_{v}^{\mathcal{E}}-s_{e}^{\mathcal{E}}s_{e}^{\mathcal{E}%
\ast })$ is nonzero. 

Next suppose $E\in \operatorname{FE}( \Lambda ^{1}) \backslash
\widehat{\mathcal{E}}$. By Lemma \ref{compatible-boundary-path-non-empty},
there exists $x\in \left. v\partial ( \Lambda ;\mathcal{E}) \right\backslash
E\partial ( \Lambda ;\mathcal{E}) $. For $e\in E$, $(S_{v}^{\mathcal{E}%
}-S_{e}^{\mathcal{E}}S_{e}^{\mathcal{E}\ast })( e_{x}) =S_{v}^{{\mathcal{E}}%
}( e_{x}) =e_{x}$ and
\begin{equation*}
\prod_{e\in E}(S_{v}^{\mathcal{E}}-S_{e}^{\mathcal{E}}S_{e}^{\mathcal{E}\ast
})( e_{x}) =e_{x}\neq 0.
\end{equation*}%
So $\prod_{e\in E}(S_{v}^{\mathcal{E}}-S_{e}^{\mathcal{E}}S_{e}^{\mathcal{E}%
\ast })\neq 0$ and then $\prod_{e\in E}(s_{v}^{\mathcal{E}}-s_{e}^{\mathcal{E%
}}s_{e}^{\mathcal{E}\ast })\neq 0$.

For the reverse implication suppose $E\in \widehat{\mathcal{E}}$. 
Then there exists $F\in \mathcal{E}$ with $F\subseteq E$. So $\prod_{f\in
F}(s_{v}^{\mathcal{E}}-s_{f}^{\mathcal{E}}s_{f}^{\mathcal{E}\ast })=0$ and
since $F\subseteq E$, $\prod_{e\in E}(s_{v}^{\mathcal{E}}-s_{e}^{\mathcal{E}%
}s_{e}^{\mathcal{E}\ast })=0$.
\end{proof}

\section{Relationship between efficient and satiated sets}

\label{Section-efficient-and-satiated}

Now we discuss satiated set and show that for a
finitely aligned $k$-graph, there exists a bijection between its efficient
sets and its satiated sets (Theorem \ref{bijection-efficient-and-satiated}).
As in \cite[Definition 4.1]{Si06}, a subset $\mathcal{F}\subseteq \operatorname{FE}(
\Lambda ) $ is \emph{satiated} if it satisfies
\begin{enumerate}
\item[(S1)] if $E\in \mathcal{F} $ and $F\in \operatorname{FE}( \Lambda ) $ with $%
E\subseteq F$, then $F\in \mathcal{F} $;

\item[(S2)] if $E\in \mathcal{F} $ and $\lambda \in r( E) \Lambda \backslash
E\Lambda $, then $\operatorname{Ext}_{\Lambda }( \lambda ;E) \in \mathcal{F} $;

\item[(S3)] if $E\in \mathcal{F} $ and $0<n_{\lambda }\leq d( \lambda ) $
for $\lambda \in E$, then $\{ \lambda ( 0,n_{\lambda }) :\lambda \in E\} \in
\mathcal{F} $;

\item[(S4)] if $E\in \mathcal{F}$, $E^{\prime }\subseteq E$ and for each $%
\lambda \in E^{\prime }$, $E_{\lambda }^{\prime }\in s( \lambda ) \mathcal{F}
$, then%
\begin{equation*}
\Big(( E\backslash E^{\prime }) \cup \Big(\bigcup_{\lambda \in E^{\prime
}}\lambda E_{\lambda }^{\prime }\Big)\Big)\in \mathcal{F}\text{.}
\end{equation*}
\end{enumerate}
For $\mathcal{F}\subseteq \operatorname{FE}( \Lambda ) $, Sims writes $\overline{%
\mathcal{F}}$ for the smallest satiated subset of $\operatorname{FE}( \Lambda ) $
which contains $\mathcal{F}$, and call it the \emph{satiation}\ of $\mathcal{%
F}$ \cite[Section 5]{Si06}. He also shows how to construct the satiation of $%
\mathcal{F}$. 

\begin{remark}
\label{remark-ehat-esatiated}Suppose $\mathcal{F}\subseteq \operatorname{FE}(
\Lambda ) $. Sims defines maps $\Sigma _{1}$ to $\Sigma _{4}$ \cite[Definition
5.2]{Si06} and shows that the iterated application of these maps produces 
$\overline{\mathcal{F}}$ \cite[Proposition 5.5]{Si06}. The set%
\begin{equation*}
\Sigma _{1}( \mathcal{F}) :=\{ F\in \operatorname{FE}( \Lambda ) :\text{there exists
}G\in \mathcal{F}\text{ with }G\subseteq F\}
\end{equation*}%
is contained in $\overline{\mathcal{F}}$. So we get the following result:
Suppose that ${\mathcal{E}}\subseteq \operatorname{FE}( \Lambda ^{1}) $. Then the set $\widehat{\mathcal{E}}$ is contained in $\Sigma _{1}(
\mathcal{E}) $ and $\widehat{\mathcal{E}}\subseteq \overline{\mathcal{E}}$.
So $\mathcal{E}\subseteq \widehat{\mathcal{E}}\subseteq \overline{\mathcal{%
E}}$ and $\overline{\mathcal{E}}\subseteq \overline{\widehat{\mathcal{E}}}%
\subseteq \overline{\overline{\mathcal{E}}}=\overline{\mathcal{E}}$ giving
\begin{equation*}
\overline{\widehat{\mathcal{E}}}=\overline{\mathcal{E}}.
\end{equation*}
\end{remark}

\begin{remark}
Corollary 5.6 of \cite{Si06} shows that $C^{\ast }(\Lambda ;\mathcal{E}%
)=C^{\ast }(\Lambda ;\overline{\mathcal{E}})$. So by Proposition \ref%
{cRK-family-without-efficient}, $C^{\ast }(\Lambda ;\mathcal{E})$, $%
C^{\ast }(\Lambda ;\widehat{\mathcal{E}})$ and $C^{\ast }(\Lambda ;\overline{%
\mathcal{E}})$ all coincide.
\end{remark}

Now we state the main result of this section.
\begin{theorem}
\label{bijection-efficient-and-satiated}For a finitely aligned $k$-graph $%
\Lambda $, the map ${\mathcal{E}}\mapsto \overline{{\mathcal{E}}}$ is a
bijection between efficient sets of $\Lambda $ and satiated sets of $\Lambda
$, with inverse given by $\mathcal{F}\mapsto \min ( \mathcal{F}\cap \operatorname{FE}%
( \Lambda ^{1}) ) $.
\end{theorem}

The rest of this section is devoted to proving Theorem \ref%
{bijection-efficient-and-satiated}. First we establish some preliminary
results (Proposition \ref{connect-efficient-to-satiated} and Proposition \ref%
{connect-satiated-to-efficient}). Proposition \ref%
{connect-efficient-to-satiated} describes the relationship between an efficient
set with its satiation. On the other hand, given a satiated set, we
construct an efficient set in Proposition \ref{connect-satiated-to-efficient}%
.

\begin{proposition}
\label{connect-efficient-to-satiated}Suppose that $\Lambda $ is a finitely
aligned $k$-graph and that ${\mathcal{E}}\subseteq \operatorname{FE}( \Lambda ^{1}) $
is efficient. Then $\widehat{{\mathcal{E}}}=\overline{{\mathcal{E}}}\cap
\operatorname{FE}( \Lambda ^{1}) $.
\end{proposition}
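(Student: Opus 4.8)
The plan is to prove the two inclusions separately; the forward inclusion is essentially already recorded, and the reverse inclusion is carried by the relative Cuntz-Krieger algebra together with Proposition \ref{properties-of-relative-CK-algebra}.

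For $\widehat{{\mathcal{E}}}\subseteq \overline{{\mathcal{E}}}\cap \operatorname{FE}(\Lambda^1)$, every element of $\widehat{{\mathcal{E}}}$ lies in $\operatorname{FE}(\Lambda^1)$ by the definition of the edge satiation, and Remark \ref{remark-ehat-esatiated} already gives $\widehat{{\mathcal{E}}}\subseteq \overline{{\mathcal{E}}}$. So this inclusion is immediate and requires no further work.

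For the reverse inclusion $\overline{{\mathcal{E}}}\cap \operatorname{FE}(\Lambda^1)\subseteq \widehat{{\mathcal{E}}}$, I would take $E\in \overline{{\mathcal{E}}}\cap \operatorname{FE}(\Lambda^1)$ and set $v:=r(E)$, so that $E\subseteq v\Lambda^1$ consists of edges. The idea is to convert membership in the satiated set $\overline{{\mathcal{E}}}$ into the vanishing of a Cuntz-Krieger product for the universal family. By construction, the universal relative Cuntz-Krieger $(\Lambda ;\overline{{\mathcal{E}}})$-family satisfies the Cuntz-Krieger relation at every member of $\overline{{\mathcal{E}}}$; and since $C^{\ast }(\Lambda ;{\mathcal{E}})=C^{\ast }(\Lambda ;\overline{{\mathcal{E}}})$ (Corollary 5.6 of \cite{Si06}, as used in the Remark preceding the main theorem of this section), the universal $(\Lambda ;{\mathcal{E}})$-family satisfies the very same relations. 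In particular $\prod_{e\in E}(s_{v}^{{\mathcal{E}}}-s_{e}^{{\mathcal{E}}}s_{e}^{{\mathcal{E}}\ast })=0$. Because $E\subseteq v\Lambda^1$ consists only of edges, Proposition \ref{properties-of-relative-CK-algebra}\eqref{it2:propsofrel} then yields $E\in \widehat{{\mathcal{E}}}$, completing the inclusion.

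The step I expect to be the crux is the passage from ``$E\in \overline{{\mathcal{E}}}$'' to ``the Cuntz-Krieger product at $E$ vanishes in $C^{\ast }(\Lambda ;{\mathcal{E}})$'': I must be careful that identifying $C^{\ast }(\Lambda ;{\mathcal{E}})$ with $C^{\ast }(\Lambda ;\overline{{\mathcal{E}}})$ genuinely transports all the $\overline{{\mathcal{E}}}$-relations onto the universal $\mathcal{E}$-family, and that the restriction $E\in \operatorname{FE}(\Lambda^1)$ is exactly the hypothesis needed to invoke Proposition \ref{properties-of-relative-CK-algebra}\eqref{it2:propsofrel}. An alternative, purely combinatorial route would verify directly that $\widehat{{\mathcal{E}}}$ is stable under Sims's operations $\Sigma_1,\ldots,\Sigma_4$ when the output happens to land in $\operatorname{FE}(\Lambda^1)$, using (E1)--(E3) together with Proposition \ref{efficient-exhaustive-edge} and Proposition \ref{properties-of-ideal}; but the $C^{\ast }$-algebraic argument is shorter and avoids unwinding the iterative construction of $\overline{{\mathcal{E}}}$.
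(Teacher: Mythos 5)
Your proposal is correct and takes essentially the same approach as the paper: the forward inclusion is exactly the paper's (the definition of $\widehat{\mathcal{E}}$ plus Remark \ref{remark-ehat-esatiated}), and the reverse inclusion likewise passes from $E\in\overline{\mathcal{E}}\cap\operatorname{FE}(\Lambda^{1})$ to $\prod_{e\in E}(s_{r(E)}^{\mathcal{E}}-s_{e}^{\mathcal{E}}s_{e}^{\mathcal{E}\ast})=0$ and then invokes Proposition \ref{properties-of-relative-CK-algebra}\eqref{it2:propsofrel}. The only (harmless) difference is the justification of that vanishing step: the paper cites \cite[Corollary 4.9]{Si06} directly, whereas you derive it from \cite[Corollary 5.6]{Si06}; this is legitimate, since that corollary identifies $C^{\ast}(\Lambda;\mathcal{E})$ with $C^{\ast}(\Lambda;\overline{\mathcal{E}})$ as quotients of $TC^{\ast}(\Lambda)$ via the canonical map on generators, so the universal $(\Lambda;\mathcal{E})$-family indeed satisfies every $\overline{\mathcal{E}}$-relation.
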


\begin{proof}
To show $\widehat{{\mathcal{E}}}\subseteq ( \overline{{\mathcal{E}}}\cap
\operatorname{FE}( \Lambda ^{1}) ) $, take $E\in \widehat{{\mathcal{E}}}$. It is
clear that $E\in \operatorname{FE}( \Lambda ^{1}) $. On the other hand, by Remark %
\ref{remark-ehat-esatiated}, $\widehat{{\mathcal{E}}}\subseteq \overline{{%
\mathcal{E}}}$ and $E\in \overline{{\mathcal{E}}}$. So $\widehat{{\mathcal{E}%
}}\subseteq ( \overline{{\mathcal{E}}}\cap \operatorname{FE}( \Lambda ^{1}) ) $.

For $(\overline{{\mathcal{E}}}\cap \operatorname{FE}(\Lambda ^{1}))\subseteq
\widehat{{\mathcal{E}}}$, take $E\in (\overline{{\mathcal{E}}}\cap \operatorname{FE}%
(\Lambda ^{1}))$. Let $\{s_{\lambda }^{{\mathcal{E}}}:\lambda \in \Lambda \}$
be the universal relative Cuntz-Krieger $(\Lambda ;{\mathcal{E}})$-family.
So $\prod_{e\in E}(s_{r(E)}^{{\mathcal{E}}}-s_{e}^{{\mathcal{E}}}s_{e}^{{%
\mathcal{E}}\ast })=0$ since $E\in \overline{{\mathcal{E}}}$ and \cite[%
Corollary 4.9]{Si06}. Since $E\in \operatorname{FE}(\Lambda ^{1})$, Proposition %
\ref{properties-of-relative-CK-algebra}\eqref{it2:propsofrel} implies $E\in \widehat{{%
\mathcal{E}}}$.
\end{proof}

Before stating Proposition \ref{connect-satiated-to-efficient}, we establish
some results that we use in the proof.

\begin{lemma}
\label{efficient-inequality-lemma}Let $\Lambda $ be a finitely
aligned $k$-graph. Suppose that $\lambda \in \Lambda $, that $m\in \mathbb{N}%
^{k}$, and that $E\subseteq r(\lambda )\Lambda ^{m}$. If $\operatorname{Ext}%
_{\Lambda }(\lambda ;E)\neq \emptyset $, then there exists a unique $n\in
\mathbb{N}^{k}$ such that $\operatorname{Ext}_{\Lambda }(\lambda ;E)\subseteq
s(\lambda )\Lambda ^{n}$ and $\left\vert n\right\vert \leq \left\vert
m\right\vert $.
\end{lemma}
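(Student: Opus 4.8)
The plan is to pin down the degree of an arbitrary element of $\operatorname{Ext}_{\Lambda}(\lambda;E)$ and observe that it is completely determined by $d(\lambda)$ and $m$. First I would fix some $g\in\operatorname{Ext}_{\Lambda}(\lambda;E)$, which exists because the set is nonempty. By definition of the extension there is $e\in E$ with $\lambda g\in\operatorname{MCE}(\lambda,e)$, and since $E\subseteq r(\lambda)\Lambda^{m}$ we have $d(e)=m$. The defining property of $\operatorname{MCE}(\lambda,e)$ forces $d(\lambda g)=d(\lambda)\vee d(e)=d(\lambda)\vee m$, and functoriality of $d$ gives $d(\lambda g)=d(\lambda)+d(g)$. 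Combining these yields \[ d(g)=(d(\lambda)\vee m)-d(\lambda). \] Setting $n:=(d(\lambda)\vee m)-d(\lambda)$, I note $n\in\mathbb{N}^{k}$ because $d(\lambda)\vee m\geq d(\lambda)$ coordinate-wise.

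Next I would read off the two conclusions from this single computation. Applying it to every $g\in\operatorname{Ext}_{\Lambda}(\lambda;E)$ shows each such $g$ has $d(g)=n$, and since $r(g)=s(\lambda)$ is built into the definition of $\operatorname{Ext}_{\Lambda}$, this gives $\operatorname{Ext}_{\Lambda}(\lambda;E)\subseteq s(\lambda)\Lambda^{n}$. Uniqueness of $n$ then follows precisely because the set is nonempty: any $n'$ with $\operatorname{Ext}_{\Lambda}(\lambda;E)\subseteq s(\lambda)\Lambda^{n'}$ must agree with the common degree $n$ of the at-least-one element present. For the inequality I would argue coordinate-wise: for each $i$, $n_{i}=\max(d(\lambda)_{i},m_{i})-d(\lambda)_{i}$ equals $m_{i}-d(\lambda)_{i}\leq m_{i}$ when $m_{i}\geq d(\lambda)_{i}$, and equals $0\leq m_{i}$ otherwise, so $n\leq m$ and hence $\left\vert n\right\vert\leq\left\vert m\right\vert$.

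I do not anticipate a genuine obstacle; the argument is a short degree count resting on the fact that all edges of $E$ share the degree $m$. The one point that requires care is the exact role of the nonemptiness hypothesis: it is \emph{not} needed for the degree computation (which is vacuous when the set is empty) but \emph{is} essential for the uniqueness claim, since if $\operatorname{Ext}_{\Lambda}(\lambda;E)=\emptyset$ then the containment $\operatorname{Ext}_{\Lambda}(\lambda;E)\subseteq s(\lambda)\Lambda^{n'}$ holds vacuously for every $n'$ and no $n$ is distinguished. I would therefore make sure the writeup invokes nonemptiness exactly at the uniqueness step.
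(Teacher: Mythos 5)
Your proof is correct and follows essentially the same route as the paper's: both rest on the identity $d(\lambda)+d(g)=d(\lambda)\vee d(e)$ together with $d(e)=m$, which pins down the common degree of every element of $\operatorname{Ext}_{\Lambda}(\lambda;E)$. Your explicit formula $n=(d(\lambda)\vee m)-d(\lambda)$ in fact gives the slightly stronger coordinate-wise bound $n\leq m$, whereas the paper compares two arbitrary elements and then deduces only $\left\vert n\right\vert \leq \left\vert m\right\vert$ from $\left\vert d(\lambda)\vee d(\mu)\right\vert \leq \left\vert d(\lambda)\right\vert +\left\vert d(\mu)\right\vert$.
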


\begin{proof}
Take $\nu _{1},\nu _{2}\in \operatorname{Ext}_{\Lambda }(\lambda ;E)$. We show $%
d(\nu _{1})=d(\nu _{2})$. So there exist $\mu _{1},\mu _{2}\in E$ with $\nu
_{1}\in \operatorname{Ext}_{\Lambda }(\lambda ;\{\mu _{1}\})$ and $\nu _{2}\in \operatorname{%
Ext}_{\Lambda }(\lambda ;\{\mu _{2}\})$. Since $E\subseteq r(\lambda
)\Lambda ^{m}$, then $d(\mu _{1})=d(\mu _{2})$ and%
\begin{equation*}
d(\lambda )+d(\nu _{1})=d(\lambda \nu _{1})=d(\lambda )\vee d(\mu
_{1})=d(\lambda )\vee d(\mu _{2})=d(\lambda \nu _{2})=d(\lambda )+d(\nu _{2})%
\text{.}
\end{equation*}%
So $d(\nu _{1})=d(\nu _{2})$. Then there exists a unique $n\in \mathbb{N}%
^{k} $ such that $\operatorname{Ext}_{\Lambda }(\lambda ;E)\subseteq s(\lambda
)\Lambda ^{n}$.

To show $\left\vert n\right\vert \leq \left\vert m\right\vert $, take $\nu
\in \operatorname{Ext}_{\Lambda }(\lambda ;E)$. There exists $\mu \in E\subseteq
r(\lambda )\Lambda ^{m}$ with $\nu \in \operatorname{Ext}_{\Lambda }(\lambda ;\{\mu
\})$. Then $d(\mu )=m$ and%
\begin{equation*}
\left\vert d(\lambda )\right\vert +\left\vert n\right\vert =\left\vert
d(\lambda )+d(\nu )\right\vert =\left\vert d(\lambda )\vee d(\mu
)\right\vert \leq \left\vert d(\lambda )\right\vert +\left\vert d(\mu
)\right\vert =\left\vert d(\lambda )\right\vert +\left\vert m\right\vert
\text{.}
\end{equation*}%
Thus $\left\vert n\right\vert \leq \left\vert m\right\vert $.
\end{proof}

\begin{corollary}
\label{efficient-inequality}For $E\subseteq \Lambda $, we define%
\begin{equation*}
L(E):=\max_{\mu \in E}\left\vert d(\mu )\right\vert
\end{equation*}%
($L(E):=0$ if $E=\emptyset $). Then for $v\in \Lambda ^{0}$, $\lambda \in
v\Lambda $ and $E\subseteq v\Lambda $, $L(\operatorname{Ext}_{\Lambda }(\lambda
;E))\leq L(E)$.
\end{corollary}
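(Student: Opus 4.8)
The plan is to reduce the statement to the single-degree situation already handled in Lemma~\ref{efficient-inequality-lemma}. The key observation is that the extension of $\lambda$ along a set $E$ is assembled edge-by-edge: directly from the definition of $\operatorname{Ext}$,
\begin{equation*}
\operatorname{Ext}_{\Lambda}(\lambda;E)=\bigcup_{\mu\in E}\operatorname{Ext}_{\Lambda}(\lambda;\{\mu\}),
\end{equation*}
since $g\in\operatorname{Ext}_{\Lambda}(\lambda;E)$ holds exactly when $\lambda g\in\operatorname{MCE}(\lambda,\mu)$ for some $\mu\in E$. Because $\lambda\in v\Lambda$ and $E\subseteq v\Lambda$, every $\mu\in E$ satisfies $r(\mu)=v=r(\lambda)$, so each singleton meets the hypothesis of Lemma~\ref{efficient-inequality-lemma}: writing $m_{\mu}:=d(\mu)$, we have $\{\mu\}\subseteq r(\lambda)\Lambda^{m_{\mu}}$, a set of paths of constant degree $m_{\mu}$.

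Next I would apply the Lemma to each nonempty $\operatorname{Ext}_{\Lambda}(\lambda;\{\mu\})$. It produces a unique $n_{\mu}\in\mathbb{N}^{k}$ with $\operatorname{Ext}_{\Lambda}(\lambda;\{\mu\})\subseteq s(\lambda)\Lambda^{n_{\mu}}$ and $\left\vert n_{\mu}\right\vert\leq\left\vert m_{\mu}\right\vert=\left\vert d(\mu)\right\vert\leq L(E)$. Hence every $g$ lying in some $\operatorname{Ext}_{\Lambda}(\lambda;\{\mu\})$ has $\left\vert d(g)\right\vert=\left\vert n_{\mu}\right\vert\leq L(E)$. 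Feeding this through the union above, every $g\in\operatorname{Ext}_{\Lambda}(\lambda;E)$ satisfies $\left\vert d(g)\right\vert\leq L(E)$, and taking the maximum over such $g$ yields $L(\operatorname{Ext}_{\Lambda}(\lambda;E))\leq L(E)$.

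Finally I would dispose of the degenerate cases in a sentence: if $\operatorname{Ext}_{\Lambda}(\lambda;E)=\emptyset$ then $L(\operatorname{Ext}_{\Lambda}(\lambda;E))=0\leq L(E)$ by the convention $L(\emptyset):=0$, and any individual $\operatorname{Ext}_{\Lambda}(\lambda;\{\mu\})$ that is empty simply contributes nothing to the union and can be ignored. There is no genuine obstacle here; the only point requiring care is the bookkeeping that splits the possibly mixed-degree set $E$ into singletons of constant degree so that Lemma~\ref{efficient-inequality-lemma} applies, and it is precisely the union identity that lets the per-edge degree bounds $\left\vert n_{\mu}\right\vert\leq\left\vert d(\mu)\right\vert$ assemble into the global bound $L(\operatorname{Ext}_{\Lambda}(\lambda;E))\leq L(E)$.
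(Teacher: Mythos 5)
Your proof is correct and follows essentially the same route as the paper: both reduce to singleton sets $\{\mu\}$ (which automatically have constant degree, so Lemma~\ref{efficient-inequality-lemma} applies) via the observation that $\operatorname{Ext}_{\Lambda}(\lambda;E)=\bigcup_{\mu\in E}\operatorname{Ext}_{\Lambda}(\lambda;\{\mu\})$. The only cosmetic difference is that the paper picks a degree-maximising element of $\operatorname{Ext}_{\Lambda}(\lambda;E)$ and bounds it, whereas you bound every element of the union; the substance is identical.
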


\begin{proof}
If $\operatorname{Ext}_{\Lambda }(\lambda ;E)=\emptyset $, then we are done. Suppose
$\operatorname{Ext}_{\Lambda }(\lambda ;E)\neq \emptyset $. Take $\mu \in \operatorname{Ext}%
_{\Lambda }(\lambda ;E)$ with $\left\vert d(\mu )\right\vert =L(\operatorname{Ext}%
_{\Lambda }(\lambda ;E))$. So there exists $\gamma \in E$ such that $\mu \in
\operatorname{Ext}_{\Lambda }(\lambda ;\{\gamma \})$. By Lemma \ref%
{efficient-inequality-lemma}, $\left\vert d(\mu )\right\vert \leq \left\vert
d(\gamma )\right\vert \leq L(E)$. Therefore $L(\operatorname{Ext}_{\Lambda }(\lambda
;E))\leq L(E).$
\end{proof}

\begin{lemma}
\label{connect-satiated-to-efficient-lemma}Suppose that $\Lambda $ is a
finitely aligned $k$-graph\ and that $\mathcal{F}$ is a satiated set. Then $%
\mathcal{F}\subseteq \overline{(\mathcal{F}\cap \operatorname{FE}(\Lambda ^{1}))}$.
\end{lemma}

\begin{proof}
For $E\subseteq \Lambda $ and $l\in \mathbb{N}$, we define%
\begin{equation*}
N(E;l):=\left\vert \{m\in \mathbb{N}^{k}:\left\vert m\right\vert =l\text{
and there exists }\mu \in E\text{ with }d(\mu )=m\}\right\vert \text{.}
\end{equation*}%
With a slight abuse of notation, $N(E):=N(E;L(E))$.  To prove the lemma, we show that for 
$E\in\operatorname{FE}(\Lambda )$,
\begin{equation}
E\in \mathcal{F}\ \text{implies }E\in \overline{(\mathcal{F}\cap \operatorname{FE}%
(\Lambda ^{1}))}\text{.}  \label{eq1}
\end{equation}%
We use nested induction arguments on pairs in $(L(E),N(E))$.  Our strategy is as
follows:
We start out by proving that \eqref{eq1}\
is true for $(L(E),N(E))=(l,j)$ for $j\in \mathbb{N}$ by induction on $l$. 
So step 1 is to show that \eqref{eq1}\ is true for $(L(E),N(E))=(1,j)$.
Then for the inductive step, we assume that $l \geq 2$ and  \eqref{eq1} is true for $(L(E),N(E))=(l-x,j)$ for all $j$
and $1\leq x\leq l-1$.  We prove that \eqref{eq1} is true for $(L(E),N(E))=(l,j)$ by induction on $l$.
Thus in step 2 we show \eqref{eq1} is true for $(L(E),N(E))=(l,1)$ (using the inductive hypothesis for $l$).
Then we assume \eqref{eq1} is true for $(L(E),N(E))=(l,j-y)$ with $1\leq y\leq j-1$.
Finally, step 3 is to show that, with these assumptions in place, \eqref{eq1} holds for $(L(E),N(E))=(l,j)$.

Step 1:  If $E\in \mathcal{F}$ with $L(E)=1$, then $E\in
\operatorname{FE}(\Lambda ^{1})$ and $E\in (\mathcal{F}\cap \operatorname{FE}(\Lambda ^{1}))$%
. So $E\in \overline{(\mathcal{F}\cap \operatorname{FE}(\Lambda ^{1}))}$, as
required.

Since the argument for Step 2 and Step 3 is similar, to save from repeating things we  
take $E\in \mathcal{F}$ with $(L(E),N(E))=(l,j)$ where either $j=1$ (Step 2) or $j\geq 2$ (Step 3). To show 
$E\in \overline{(\mathcal{F}\cap \operatorname{FE}(\Lambda ^{1}))}$, we prove that $E$ can be
constructed by applying processes in (S1-4) to certain elements of 
$\overline{(\mathcal{F}\cap \operatorname{FE}(\Lambda ^{1}))}$.

Since $(L(E),N(E))=(l,j)$, there exist $m_{1},\ldots ,m_{j}\in \mathbb{N}%
^{k} $ such that for $1\leq i\leq j$, we have $\left\vert m_{i}\right\vert =l
$ and there exists $\lambda _{i}\in E$ with $d(\lambda _{i})=m_{i}$. Define
\begin{equation*}
E_{m_{i}}:=\{\lambda \in E:d(\lambda )=m_{i}\}\text{ for }1\leq i\leq j\text{%
,}
\end{equation*}%
\begin{equation*}
E^{\prime }:=\{\lambda \in E_{m_{1}}:\text{there exists }\nu _{\lambda }\in E%
\text{ with }\nu _{\lambda }\neq \lambda \text{ and }\nu _{\lambda }\nu
_{\lambda }^{\prime }=\lambda \}\text{,}
\end{equation*}%
\begin{equation*}
E^{\prime \prime }:=E_{m_{1}}\backslash E^{\prime }\text{.}
\end{equation*}%
For $\lambda \in E^{\prime \prime }$, we choose $i_{\lambda }$ with $%
d(\lambda )\geq e_{i_{\lambda }}$ and define $\lambda ^{\prime }:=\lambda
(0,d(\lambda )-e_{i_{\lambda }})$.  Since $l \geq 2$, then $\lambda ^{\prime } \notin \Lambda^{0}$. Now we establish the following claims:

\begin{claim}
\label{claim-connect-1}$\left. E\right\backslash E^{\prime }\in \mathcal{F}$.
\end{claim}

\begin{proof}[Proof of Claim~\protect\ref{claim-connect-1}]
For $\lambda \in E^{\prime }$, we get $\lambda \in E_{m_{1}}\subseteq
r(\lambda )\Lambda ^{m_{1}}$, $\nu _{\lambda }\notin E^{\prime }$ and $\nu
_{\lambda }\in \left. E\right\backslash E^{\prime }$. So%
\begin{equation*}
E\backslash E^{\prime }=E\backslash E^{\prime }\cup \{\nu _{\lambda
}:\lambda \in E^{\prime }\}=\{\lambda :\lambda \in E\backslash E^{\prime
}\}\cup \{\lambda (0,d(\nu _{\lambda })):\lambda \in E^{\prime }\}\text{.}
\end{equation*}%
Since $E\in \mathcal{F}$ and $\mathcal{F}$ is satiated, then by (S3), $%
\left. E\right\backslash E^{\prime }\in \mathcal{F}$.\hfil\penalty100\hbox{}%
\nobreak\hfill \hbox{\qed\
Claim~\ref{claim-connect-1}} \renewcommand\qed{}
\end{proof}

\begin{claim}
\label{claim-connect-2}For $\lambda \in E^{\prime \prime }$, we have $%
\lambda ^{\prime }\notin E\Lambda $ and $\operatorname{Ext}_{\Lambda }(\lambda
^{\prime };\left. E\right\backslash E^{\prime })\in \overline{(\mathcal{F}%
\cap \operatorname{FE}(\Lambda ^{1}))}$.
\end{claim}

\begin{proof}[Proof of Claim~\protect\ref{claim-connect-2}]
Take $\lambda \in E^{\prime \prime }$. Since $\lambda \in E^{\prime \prime
}=E_{m_{1}}\backslash E^{\prime }$, then $\lambda \notin E\Lambda \backslash
E$. Suppose for contradiction that $\lambda ^{\prime }\in E\Lambda $. Write $%
\lambda ^{\prime }:=e\mu $ with $e\in E$ and $\mu \in \Lambda $. Then
\begin{equation*}
\lambda =\lambda ^{\prime }\left[ \lambda (d(\lambda )-e_{i_{\lambda
}},d(\lambda ))\right] =e\mu \left[ \lambda (d(\lambda )-e_{i_{\lambda
}},d(\lambda ))\right]
\end{equation*}%
and $\lambda \in E\Lambda \backslash E$, which contradicts $\lambda \notin
E\Lambda \backslash E$. Thus $\lambda ^{\prime }\notin E\Lambda $.

Since $\lambda ^{\prime }\notin E\Lambda $, we have $\lambda ^{\prime
}\notin (E\backslash E^{\prime })\Lambda $ and $\lambda ^{\prime }\in
r(E\backslash E^{\prime })\Lambda \backslash (E\backslash E^{\prime
})\Lambda $. Since $E\backslash E^{\prime }\in \mathcal{F}$ (Claim \ref%
{claim-connect-1}), by (S2), $\operatorname{Ext}_{\Lambda }(\lambda ^{\prime
};E\backslash E^{\prime })\in \mathcal{F}$. To show $\operatorname{Ext}_{\Lambda
}(\lambda ^{\prime };E\backslash E^{\prime })\in \overline{(\mathcal{F}\cap
\operatorname{FE}(\Lambda ^{1}))}$, we now give separate arguments for $j=1$ (step 2) and $j\geq
1$ (step 3).

\begin{enumerate}
\item[(Step 2)] Suppose $j=1$. Then for $\nu \in E\backslash E_{m_{1}}$, we
have $\left\vert d(\nu )\right\vert \leq l-1$ and by Corollary \ref%
{efficient-inequality}, $L(\operatorname{Ext}_{\Lambda }(\lambda ^{\prime };\{\nu
\}))\leq l-1$. Since $(E\backslash E^{\prime })\backslash
E_{m_{1}}=E\backslash E_{m_{1}}$, this implies%
\begin{equation*}
L(\operatorname{Ext}_{\Lambda }(\lambda ^{\prime };(E\backslash E^{\prime
})\backslash E_{m}))\leq l-1\text{.}
\end{equation*}%
For $\mu \in E_{m_{1}}\subseteq r(E)\Lambda ^{m_{1}}$, we have $d(\lambda
^{\prime })=m_{1}-e_{i_{\lambda }}=d(\mu )-e_{i_{\lambda }}$, so $(d(\lambda
^{\prime })\vee d(\mu ))-d(\lambda ^{\prime })=e_{i_{\lambda }}$ and $L(%
\operatorname{Ext}_{\Lambda }(\lambda ^{\prime };\{\mu \}))\leq 1$. Then $L(\operatorname{Ext%
}_{\Lambda }(\lambda ^{\prime };E_{m}))\leq 1$ and%
\begin{equation*}
L(\operatorname{Ext}_{\Lambda }(\lambda ^{\prime };E\backslash E^{\prime }))=\max
\{L(\operatorname{Ext}_{\Lambda }(\lambda ^{\prime };(E\backslash E^{\prime
})\backslash E_{m}))),L(\operatorname{Ext}_{\Lambda }(\lambda ^{\prime
};E_{m}))\}\leq l-1\text{.}
\end{equation*}%
Since $\operatorname{Ext}_{\Lambda }(\lambda ^{\prime };E\backslash E^{\prime })\in
\mathcal{F}$ and $L(\operatorname{Ext}_{\Lambda }(\lambda ^{\prime };E\backslash
E^{\prime }))\leq l-1$, by the inductive hypothesis for $l$, we have $\operatorname{Ext}%
_{\Lambda }(\lambda ^{\prime };E\backslash E^{\prime })\in \overline{(%
\mathcal{F}\cap \operatorname{FE}(\Lambda ^{1}))}$, as required.

\item[(Step 3)] Since we have now verified both bases cases,
we have both $l\geq 2$ and $j\geq 2$. 
Take $2\leq i\leq j$. For $\nu \in E_{m_{i}}$, we have $\left\vert d(\nu )\right\vert =\left\vert
m_{i}\right\vert =l$ and by Corollary \ref{efficient-inequality}, $L(\operatorname{%
Ext}_{\Lambda }(\lambda ^{\prime };\{\nu \}))\leq \left\vert d(\nu
)\right\vert =l$. So $L(\operatorname{Ext}_{\Lambda }(\lambda ^{\prime
};E_{m_{i}}))\leq l$. If $\operatorname{Ext}_{\Lambda }(\lambda ^{\prime
};E_{m_{i}})=\emptyset $, then $N(\operatorname{Ext}_{\Lambda }(\lambda ^{\prime
};E_{m_{i}});l)=0$, otherwise, since $E_{m_{i}}\subseteq s(\lambda )\Lambda
^{m_{i}}$, by Lemma \ref{efficient-inequality-lemma}, there exists a unique $%
n\in \mathbb{N}^{k}$ with $\left\vert n\right\vert \leq \left\vert
m_{i}\right\vert =l$ such that $\operatorname{Ext}_{\Lambda }(\lambda
;E_{m_{i}})\subseteq s(\lambda )\Lambda ^{n}$. Hence in either case, $N(%
\operatorname{Ext}_{\Lambda }(\lambda ^{\prime };E_{m_{i}});l)\leq 1$. Therefore%
\begin{equation}
L(\operatorname{Ext}_{\Lambda }(\lambda ^{\prime };\bigcup_{2\leq i\leq
j}E_{m_{i}}))\leq l\text{ and }N(\operatorname{Ext}_{\Lambda }(\lambda ^{\prime
};\bigcup_{2\leq i\leq j}E_{m_{i}});l)\leq \sum_{2\leq i\leq j}1=j-1\text{.}
\label{equ-efficient-1}
\end{equation}%
On the other hand, for $\mu \in E_{m_{1}}\subseteq r(E)\Lambda ^{m_{1}}$, we
have $d(\lambda ^{\prime })=m_{1}-e_{i_{\lambda }}=d(\mu )-e_{i_{\lambda }}$%
, so $(d(\lambda ^{\prime })\vee d(\mu ))-d(\lambda ^{\prime
})=e_{i_{\lambda }}$ and $L(\operatorname{Ext}_{\Lambda }(\lambda ^{\prime };\{\mu
\}))\leq 1$. Thus%
\begin{equation}
L(\operatorname{Ext}_{\Lambda }(\lambda ^{\prime };E_{m_{1}}))\leq 1\text{ and }N(%
\operatorname{Ext}_{\Lambda }(\lambda ^{\prime };E_{m_{1}});l)=0
\label{equ-efficient-2}
\end{equation}%
since $l\geq 2$. Now note that for $\nu \in (E\backslash E^{\prime
})\backslash \bigcup_{1\leq i\leq j}E_{m_{i}}$, we have $\left\vert d(\nu
)\right\vert \leq l-1$ and by Corollary \ref{efficient-inequality}, $L(\operatorname{%
Ext}_{\Lambda }(\lambda ^{\prime };\{\nu \})\leq l-1$. Hence%
\begin{equation}
L(\operatorname{Ext}_{\Lambda }(\lambda ^{\prime };(E\backslash E^{\prime
})\backslash \bigcup_{1\leq i\leq j}E_{m_{i}}))\leq l-1\text{ and }N(\operatorname{%
Ext}_{\Lambda }(\lambda ^{\prime };(E\backslash E^{\prime })\backslash
\bigcup_{1\leq i\leq j}E_{m_{i}});l)=0.  \label{equ-efficient-3}
\end{equation}%
Therefore by \eqref{equ-efficient-1}, \eqref{equ-efficient-2}, and %
\eqref{equ-efficient-3}, we get
\begin{align*}
L(\operatorname{Ext}_{\Lambda }(\lambda ^{\prime };E\backslash E^{\prime }))& =\max
\{%
\begin{array}{c}
L(\operatorname{Ext}_{\Lambda }(\lambda ^{\prime };\bigcup_{2\leq i\leq
j+1}E_{m_{i}})),L(\operatorname{Ext}_{\Lambda }(\lambda ^{\prime };E_{m_{1}})), \\
L(\operatorname{Ext}_{\Lambda }(\lambda ^{\prime };(E\backslash E^{\prime
})\backslash \bigcup_{1\leq i\leq j+1}E_{m_{i}}))%
\end{array}%
\} \\
& \leq \max \{l,1,l-1\}=l\text{,}
\end{align*}%
\begin{align*}
N(\operatorname{Ext}_{\Lambda }(\lambda ^{\prime };E\backslash E^{\prime });l)& =N(%
\operatorname{Ext}_{\Lambda }(\lambda ^{\prime };\bigcup_{2\leq i\leq
j+1}E_{m_{i}});l)+N(\operatorname{Ext}_{\Lambda }(\lambda ^{\prime };E_{m_{1}});l) \\
& \text{ \ \ }+N(\operatorname{Ext}_{\Lambda }(\lambda ^{\prime };(E\backslash
E^{\prime })\backslash \bigcup_{1\leq i\leq j+1}E_{m_{i}});l) \\
& \leq (j-1)+0+0=j-1\text{.}
\end{align*}%
Hence $L(\operatorname{Ext}_{\Lambda }(\lambda ^{\prime };E\backslash E^{\prime }))$
is either equal to $l$ with $N(\operatorname{Ext}_{\Lambda }(\lambda ^{\prime
};E\backslash E^{\prime });l)\leq j-1$; or strictly less than $l$. In either
case, by the inductive hypotheses, $\operatorname{Ext}_{\Lambda }(\lambda ^{\prime
};E\backslash E^{\prime })\in \overline{(\mathcal{F}\cap \operatorname{FE}(\Lambda
^{1}))}\,$\ since $\operatorname{Ext}_{\Lambda }(\lambda ^{\prime };E\backslash
E^{\prime })\in \mathcal{F}$.
\end{enumerate}

Therefore, for $\lambda \in E^{\prime \prime }$, we have $\operatorname{Ext}%
_{\Lambda }(\lambda ^{\prime };E\backslash E^{\prime })\in \overline{(%
\mathcal{F}\cap \operatorname{FE}(\Lambda ^{1}))}$.\hfil\penalty100\hbox{}\nobreak%
\hfill \hbox{\qed\
Claim~\ref{claim-connect-2}} \renewcommand\qed{}
\end{proof}

\begin{claim}
\label{claim-connect-3}$G:=((E\backslash E^{\prime })\backslash E^{\prime
\prime }\cup \bigcup_{\lambda \in E^{\prime \prime }}\{\lambda ^{\prime
}\})\in \overline{(\mathcal{F}\cap \operatorname{FE}(\Lambda ^{1}))}$.
\end{claim}

\begin{proof}[Proof of Claim~\protect\ref{claim-connect-3}]
Note that $\left. E\right\backslash E^{\prime }\in \mathcal{F}$ (Claim \ref%
{claim-connect-1}). Then by (S3),
\begin{equation*}
G=\{\lambda :\lambda \in (E\backslash E^{\prime })\backslash E^{\prime
\prime }\}\cup \{\lambda (0,d(\lambda )-e_{i_{\lambda }}):\lambda \in
E^{\prime \prime }\}\in \mathcal{F}\text{.}
\end{equation*}%
If $j=1$, then $E_{m_{1}}$ contains all paths in $E$ with absolute length $l$
and $L(G)\leq l-1$. If $j\geq 2$, then $E_{m_{1}}$ contains $E\cap \Lambda
^{m_{1}}$ with $\left\vert m_{1}\right\vert =l$, so $L(G)=l$ and $N(G)=j-1$.
In either case, by the inductive hypothesis, $G\in \overline{(\mathcal{F}%
\cap \operatorname{FE}(\Lambda ^{1}))}$.\hfil\penalty100\hbox{}\nobreak\hfill
\hbox{\qed\
Claim~\ref{claim-connect-3}} \renewcommand\qed{}
\end{proof}

Now we show $E\in \overline{(\mathcal{F}\cap \operatorname{FE}(\Lambda ^{1}))}$. For
every $\lambda \in E^{\prime \prime }$, we have $\lambda ^{\prime }\notin E$
(Claim \ref{claim-connect-2}) and then $(E\backslash E^{\prime })\backslash
E^{\prime \prime }=(G\backslash \bigcup_{\lambda \in E^{\prime \prime
}}\lambda ^{\prime })$. Because $G\in \overline{(\mathcal{F}\cap \operatorname{FE}%
(\Lambda ^{1}))}$ (Claim \ref{claim-connect-3}) and for $\lambda \in
E^{\prime \prime }$, $\operatorname{Ext}_{\Lambda }(\lambda ^{\prime };E\backslash
E^{\prime })\in \overline{(\mathcal{F}\cap \operatorname{FE}(\Lambda ^{1}))}$ (Claim %
\ref{claim-connect-2}), by (S4),
\begin{align}
F& :=(E\backslash E^{\prime })\backslash E^{\prime \prime }\cup
\bigcup_{\lambda \in E^{\prime \prime }}\lambda ^{\prime }\operatorname{Ext}%
_{\Lambda }(\lambda ^{\prime };E\backslash E^{\prime }))
\label{equ-claim-connect-F} \\
& =((G\backslash \bigcup_{\lambda \in E^{\prime \prime }}\lambda ^{\prime
})\cup \bigcup_{\lambda \in E^{\prime \prime }}\lambda ^{\prime }\operatorname{Ext}%
_{\Lambda }(\lambda ^{\prime };E\backslash E^{\prime }))\in \overline{(%
\mathcal{F}\cap \operatorname{FE}(\Lambda ^{1}))}\text{.}  \notag
\end{align}%
On the other hand, for $\nu \in \bigcup_{\lambda \in E_{m_{1}}^{\prime
}}\lambda ^{\prime }\operatorname{Ext}_{\Lambda }(\lambda ^{\prime };E\backslash
E^{\prime })$, there exists $n_{\nu }\in \mathbb{N}^{k}$ such that $\nu
(0,d(n_{\nu }))\in E\backslash E^{\prime }$. For $\nu \in (E\backslash
E^{\prime })\backslash E^{\prime \prime }$, set $n_{\nu }:=d(\nu )$. By
(S3), \eqref{equ-claim-connect-F} implies%
\begin{equation*}
\{\nu (0,n_{\nu }):\nu \in F\}\in \overline{(\mathcal{F}\cap \operatorname{FE}%
(\Lambda ^{1}))}.
\end{equation*}%
Note that $\{\nu (0,n_{\nu }):\nu \in F\}\subseteq E\backslash E^{\prime
}\subseteq E$ and by (S1), $E\in \overline{(\mathcal{F}\cap \operatorname{FE}%
(\Lambda ^{1}))}$. So \eqref{eq1} is true for $(L(E),N(E))=(l,j)$.
\end{proof}

\begin{proposition}
\label{connect-satiated-to-efficient}Suppose that $\Lambda $ is a finitely
aligned $k$-graph\ and that $\mathcal{F}$ is a satiated set. Then $\min (%
\mathcal{F}\cap \operatorname{FE}(\Lambda ^{1}))$ is efficient and $\overline{\min (%
\mathcal{F}\cap \operatorname{FE}(\Lambda ^{1}))}=\mathcal{F}$.
\end{proposition}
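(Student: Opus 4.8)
The plan is to write $\mathcal{E} := \min(\mathcal{F} \cap \operatorname{FE}(\Lambda^1))$ and treat the two assertions separately, leaning throughout on one elementary observation: since every member of $\operatorname{FE}(\Lambda^1)$ is a \emph{finite} set, for any $E \in \mathcal{F} \cap \operatorname{FE}(\Lambda^1)$ the collection $\{G \in \mathcal{F} \cap \operatorname{FE}(\Lambda^1) : G \subseteq E\}$ is finite and nonempty, so it has a minimal member, and any such minimal member lies in $\mathcal{E}$. Hence \emph{every} element of $\mathcal{F} \cap \operatorname{FE}(\Lambda^1)$ contains an element of $\mathcal{E}$; I would invoke this fact repeatedly.

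To check that $\mathcal{E}$ is efficient in the sense of Definition~\ref{definition-efficient}, condition (E1) is immediate from the definition of $\min$. For (E2), given $E \in \mathcal{E}$ and $f \in r(E)\Lambda^1 \setminus E$, a degree count shows $f \notin E\Lambda$ (if $f = e\mu$ with $e \in E$, then $|d(\mu)| = 0$ forces $f = e \in E$), so $f \in r(E)\Lambda \setminus E\Lambda$ and (S2) gives $\operatorname{Ext}_\Lambda(f; E) \in \mathcal{F}$; Corollary~\ref{efficient-exhaustive} places it in $\operatorname{FE}(\Lambda^1)$, and the observation above produces the required $F \in \mathcal{E}$ with $F \subseteq \operatorname{Ext}_\Lambda(f; E)$. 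For (E3), given $E \in \mathcal{E}$, $e \in E$ and $F \in s(e)\mathcal{E} \subseteq s(e)\mathcal{F}$, I would apply (S4) with $E' = \{e\}$ and $E'_e = F$ to obtain $(E \setminus \{e\}) \cup eF \in \mathcal{F}$, and then truncate via (S3), taking $n_\lambda = d(\lambda)$ on the edges of $E \setminus \{e\}$ and $n_{ef} = d(f)$ on each $ef \in eF$; this yields exactly the edge set $(E \setminus \{e\}) \cup \{(ef)(0, d(f)) : f \in F\}$, which, being a set of edges lying in $\mathcal{F} \subseteq \operatorname{FE}(\Lambda)$, belongs to $\mathcal{F} \cap \operatorname{FE}(\Lambda^1)$ (cf.\ Proposition~\ref{properties-of-ideal}); one final application of the observation gives $G \in \mathcal{E}$ inside it.

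For the identity $\overline{\mathcal{E}} = \mathcal{F}$, the inclusion $\overline{\mathcal{E}} \subseteq \mathcal{F}$ is immediate because $\mathcal{E} \subseteq \mathcal{F}$ and $\mathcal{F}$ is satiated, so $\mathcal{F}$ contains the smallest satiated set over $\mathcal{E}$. For the reverse inclusion I would first note that every $E \in \mathcal{F} \cap \operatorname{FE}(\Lambda^1)$ contains some $F \in \mathcal{E} \subseteq \overline{\mathcal{E}}$ (the observation again), whence (S1) for the satiated set $\overline{\mathcal{E}}$ forces $E \in \overline{\mathcal{E}}$; thus $\mathcal{F} \cap \operatorname{FE}(\Lambda^1) \subseteq \overline{\mathcal{E}}$ and so $\overline{\mathcal{F} \cap \operatorname{FE}(\Lambda^1)} \subseteq \overline{\mathcal{E}}$. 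Combining this with Lemma~\ref{connect-satiated-to-efficient-lemma}, which gives $\mathcal{F} \subseteq \overline{\mathcal{F} \cap \operatorname{FE}(\Lambda^1)}$, yields $\mathcal{F} \subseteq \overline{\mathcal{E}}$ and completes the equality.

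I expect the main obstacle to be condition (E3): the subtlety is that (S4) only produces the composite paths $ef$ of degree $d(e) + d(f)$, whereas efficiency is phrased using the truncated edges $(ef)(0, d(f))$, so the crux is to recognise that an additional (S3) truncation converts the former into precisely the latter while keeping the set inside $\mathcal{F} \cap \operatorname{FE}(\Lambda^1)$. The technical heavy lifting for the second assertion has already been absorbed into Lemma~\ref{connect-satiated-to-efficient-lemma}, so there the remaining work is only the short satiation bookkeeping described above.
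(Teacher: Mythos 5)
Your proposal is correct and takes essentially the same route as the paper: (E1) from minimality, (E2) via (S2) together with Corollary~\ref{efficient-exhaustive}, (E3) via (S4), extraction of minimal elements of $\mathcal{F}\cap\operatorname{FE}(\Lambda^{1})$ using finiteness, and the identity $\overline{\min(\mathcal{F}\cap\operatorname{FE}(\Lambda^{1}))}=\mathcal{F}$ by combining $\overline{\mathcal{E}}\subseteq\overline{\mathcal{F}}=\mathcal{F}$ with Lemma~\ref{connect-satiated-to-efficient-lemma}. If anything you are more careful than the paper at (E3), where the paper cites only (S4) (which produces $(E\setminus\{e\})\cup eF$) and leaves implicit the (S3) truncation to $(E\setminus\{e\})\cup\{(ef)(0,d(f)):f\in F\}$ that you spell out; your use of (S1) in place of the paper's equal-edge-satiation argument (Remark~\ref{remark-ehat-esatiated}) is the only other, cosmetic, difference.
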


\begin{proof}
We show that $\min (\mathcal{F}\cap \operatorname{FE}(\Lambda ^{1}))$ is efficient.
To show (E1), take $E,F\in \min (\mathcal{F}\cap \operatorname{FE}(\Lambda ^{1}))$
with $E\subseteq F$. By definition of $\min (\mathcal{F}\cap \operatorname{FE}%
(\Lambda ^{1}))$, we have $E=F$ and (E1) holds. For (E2), take $E\in \min (%
\mathcal{F}\cap \operatorname{FE}(\Lambda ^{1}))$ and $g\in r(E)\Lambda
^{1}\backslash E$. Since $E\in \operatorname{FE}(\Lambda ^{1})$, by Corollary \ref%
{efficient-exhaustive}, $\operatorname{Ext}_{\Lambda }(g;E)\in \operatorname{FE}(\Lambda
^{1})$. Since $E\in \mathcal{F}$, by (S2), $\operatorname{Ext}_{\Lambda }(g;E)\in
\mathcal{F}$. Then $\operatorname{Ext}_{\Lambda }(g;E)\in (\mathcal{F}\cap \operatorname{FE}%
(\Lambda ^{1}))$ and there exists $F\in \min (\mathcal{F}\cap \operatorname{FE}%
(\Lambda ^{1}))$ with $F\subseteq \operatorname{Ext}_{\Lambda }(g;E)$. So (E2) also
holds. To show (E3), take $E,F\in \min (\mathcal{F}\cap \operatorname{FE}(\Lambda
^{1}))$ and $g\in Er(F)$. Define
\begin{equation*}
E_{F}:=(E\backslash \{g\})\cup \{(gf)(0,d(f)):f\in F\}\text{.}
\end{equation*}%
Since $E,F\in \mathcal{F}$, by (S4), $E_{F}\in \mathcal{F}$ and $E_{F}\in
\operatorname{FE}(\Lambda )$. Then $E_{F}\in \operatorname{FE}(\Lambda ^{1})$ since $%
E_{F}\subseteq r(E_{F})\Lambda ^{1}$. So $E_{F}\in (\mathcal{F}\cap \operatorname{FE}%
(\Lambda ^{1}))$ and there exists $G\in \min (\mathcal{F}\cap \operatorname{FE}%
(\Lambda ^{1}))$ with $G\subseteq E_{F}$. Therefore (E3) holds and $\min (%
\mathcal{F}\cap \operatorname{FE}(\Lambda ^{1}))$ is efficient.

Now we show $\overline{\min (\mathcal{F}\cap \operatorname{FE}(\Lambda ^{1}))}=%
\mathcal{F}$.  Since both $\min (%
\mathcal{F}\cap \operatorname{FE}(\Lambda ^{1}))$ and $\mathcal{F}\cap \operatorname{FE}(\Lambda ^{1})$ have the same edge satiation, then by Remark \ref{remark-ehat-esatiated}, $\overline{\min (%
\mathcal{F}\cap \operatorname{FE}(\Lambda ^{1}))}=\overline{(\mathcal{F}\cap \operatorname{FE%
}(\Lambda ^{1}))}$. So it suffices to show $\overline{(\mathcal{F}\cap \operatorname{%
FE}(\Lambda ^{1}))}=\mathcal{F}$. Note that $(\mathcal{F}\cap \operatorname{FE}%
(\Lambda ^{1}))\subseteq \mathcal{F}$. So $\overline{(\mathcal{F}\cap \operatorname{%
FE}(\Lambda ^{1}))}\subseteq \overline{\mathcal{F}}=\mathcal{F}$ since $%
\mathcal{F}$ is satiated. The other inclusion follows from Lemma \ref%
{connect-satiated-to-efficient-lemma}.
\end{proof}

We are finally ready to prove Theorem \ref{bijection-efficient-and-satiated}.

\begin{proof}[Proof of Theorem \protect\ref{bijection-efficient-and-satiated}%
]
To show injectivity, take efficient sets ${\mathcal{E}}_{1}$ and ${%
\mathcal{E}}_{2}$ with $\overline{{\mathcal{E}}_{1}}=\overline{{\mathcal{E}}%
_{2}}$. By Proposition \ref{connect-efficient-to-satiated}, $\widehat{{%
\mathcal{E}}_{1}}=\overline{{\mathcal{E}}_{1}}\cap \operatorname{FE}( \Lambda ^{1}) =%
\overline{{\mathcal{E}}_{2}}\cap \operatorname{FE}( \Lambda ^{1}) =\widehat{{%
\mathcal{E}}_{2}}$. By Lemma \ref{efficient-min-e-hat}, ${\mathcal{E}}%
_{1}={\mathcal{E}}_{2}$.
For surjectivity, take a satiated set $\mathcal{F}$. By Proposition %
\ref{connect-satiated-to-efficient}, $\min ( \mathcal{F}\cap \operatorname{FE}(
\Lambda ^{1}) ) $ is efficient and $\mathcal{F}=\overline{\min ( \mathcal{F}%
\cap \operatorname{FE}( \Lambda ^{1}) ) }$, as required.
\end{proof}

A direct consequence of Theorem \ref{bijection-efficient-and-satiated} is:

\begin{corollary}
\label{bijection-satiated-and-minimally-satiation}The map $\theta :\mathcal{F%
}\mapsto \mathcal{F}\cap \operatorname{FE}( \Lambda ^{1}) $ is a bijective map
between satiated sets and edge satiations of efficient sets. Furthermore $%
\theta $ preserves containment in the sense that $\mathcal{F}%
_{1}\varsubsetneq \mathcal{F}_{2}$ implies $( \mathcal{F}_{1}\cap \operatorname{FE}(
\Lambda ^{1}) ) \varsubsetneq ( \mathcal{F}_{2}\cap \operatorname{FE}( \Lambda ^{1})
) $.
\end{corollary}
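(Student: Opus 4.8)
The plan is to read off the corollary from the bijection of Theorem~\ref{bijection-efficient-and-satiated} together with Proposition~\ref{connect-efficient-to-satiated}, without any new construction. Write $\Phi$ for the map $\mathcal{E}\mapsto\overline{\mathcal{E}}$ sending efficient sets to satiated sets, which is a bijection by Theorem~\ref{bijection-efficient-and-satiated}. The crucial observation is that $\theta$ factors through $\Phi$: for an efficient set $\mathcal{E}$, Proposition~\ref{connect-efficient-to-satiated} gives $\theta(\Phi(\mathcal{E}))=\overline{\mathcal{E}}\cap\operatorname{FE}(\Lambda^{1})=\widehat{\mathcal{E}}$. Hence $\theta\circ\Phi$ is exactly the map $\mathcal{E}\mapsto\widehat{\mathcal{E}}$.

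First I would verify that $\mathcal{E}\mapsto\widehat{\mathcal{E}}$ is a bijection from efficient sets onto the edge satiations of efficient sets. Surjectivity holds by the very definition of an edge satiation of an efficient set. For injectivity, if $\widehat{\mathcal{E}_{1}}=\widehat{\mathcal{E}_{2}}$ for efficient $\mathcal{E}_{1},\mathcal{E}_{2}$, then applying Lemma~\ref{efficient-min-e-hat} yields $\mathcal{E}_{1}=\min(\widehat{\mathcal{E}_{1}})=\min(\widehat{\mathcal{E}_{2}})=\mathcal{E}_{2}$.

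Combining these, since $\Phi$ and $\mathcal{E}\mapsto\widehat{\mathcal{E}}$ are both bijections and $\theta\circ\Phi=(\mathcal{E}\mapsto\widehat{\mathcal{E}})$, it follows that $\theta=(\mathcal{E}\mapsto\widehat{\mathcal{E}})\circ\Phi^{-1}$ is a bijection from satiated sets onto edge satiations of efficient sets. In particular $\theta$ is injective on satiated sets. For the containment claim, take satiated sets $\mathcal{F}_{1}\subsetneq\mathcal{F}_{2}$. The inclusion $(\mathcal{F}_{1}\cap\operatorname{FE}(\Lambda^{1}))\subseteq(\mathcal{F}_{2}\cap\operatorname{FE}(\Lambda^{1}))$ is immediate. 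If equality held, then $\theta(\mathcal{F}_{1})=\theta(\mathcal{F}_{2})$, and injectivity of $\theta$ would force $\mathcal{F}_{1}=\mathcal{F}_{2}$, contradicting $\mathcal{F}_{1}\subsetneq\mathcal{F}_{2}$; hence the inclusion is strict.

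I expect no serious obstacle, as the result is essentially a repackaging of the established bijection. The one point demanding care is confirming that $\theta$ sends satiated sets precisely onto edge satiations of efficient sets (rather than into some larger subcollection of $\operatorname{FE}(\Lambda^{1})$); this is exactly what the identity $\theta\circ\Phi=(\mathcal{E}\mapsto\widehat{\mathcal{E}})$ supplies.
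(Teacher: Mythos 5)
Your proof is correct and follows essentially the same route as the paper: the paper likewise derives bijectivity of $\theta$ from Lemma~\ref{efficient-min-e-hat} and Theorem~\ref{bijection-efficient-and-satiated} (your factorisation $\theta\circ\Phi=(\mathcal{E}\mapsto\widehat{\mathcal{E}})$, via Proposition~\ref{connect-efficient-to-satiated}, just makes that terse citation explicit), and it proves the containment statement by the identical contradiction argument using injectivity of $\theta$.
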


\begin{proof}
The bijectivity of $\theta $ follows Lemma \ref%
{efficient-min-e-hat} and Theorem \ref{bijection-efficient-and-satiated}.
Take satiated sets $\mathcal{F}_{1},\mathcal{F}_{2}$ $\subseteq \operatorname{FE}(
\Lambda ) $ such that $\mathcal{F}_{1}\varsubsetneq \mathcal{F}_{2}$. We
trivially have $( \mathcal{F}_{1}\cap \operatorname{FE}( \Lambda ^{1}) ) \subseteq (
\mathcal{F}_{2}\cap \operatorname{FE}( \Lambda ^{1}) ) $. Suppose for contradiction
that $( \mathcal{F}_{1}\cap \operatorname{FE}( \Lambda ^{1}) ) =( \mathcal{F}%
_{2}\cap \operatorname{FE}( \Lambda ^{1}) ) $. Then
\begin{equation*}
\mathcal{F}_{1}=\theta ^{-1}( \mathcal{F}_{1}\cap \operatorname{FE}( \Lambda ^{1}) )
=\theta ^{-1}( \mathcal{F}_{2}\cap \operatorname{FE}( \Lambda ^{1}) ) =\mathcal{F}%
_{2}\text{,}
\end{equation*}%
which contradicts $\mathcal{F}_{1}\neq \mathcal{F}_{2}$. The conclusion
follows.
\end{proof}

\section{\protect\bigskip Applications}

\label{Section-applications}

\subsection{The gauge-invariant uniqueness theorem for relative
Cuntz-Krieger algebras}

\label{Subsection-gauge-invariant-uniqueness-theorem}In \cite{Si06}, Sims
introduced two uniqueness theorems for relative Cuntz-Krieger algebras,
namely the gauge-invariant uniqueness theorem \cite[Theorem 6.1]{Si06} and
the Cuntz-Krieger uniqueness theorem \cite[Theorem 6.3]{Si06}. In this
subsection, we show how our Theorem \ref{bijection-efficient-and-satiated}
simplifies the hypothesis of \cite[Theorem 6.1]{Si06} as follows:

\begin{theorem}[The gauge-invariant uniqueness theorem]
\label{the-gauge-invariant-uniqueness-theorem}Let $\Lambda $ be a
finitely aligned $k$-graph such that ${\mathcal{E}}\subseteq \operatorname{FE}(
\Lambda ^{1}) $ is efficient. Suppose that $\{ S_{\lambda }:\lambda \in
\Lambda \} $ is a relative Cuntz-Krieger $( \Lambda ;{\mathcal{E}}) $-family
in a $C^{\ast }$-algebra $B$ which satisfies:

\begin{enumerate}
\item[(G1)] $S_{v}\neq 0$ for all $v\in \Lambda ^{0}$;

\item[(G2)] $\prod_{e\in E}( S_{r( E) }-S_{e}S_{e}^{\ast }) \neq 0$ for all $%
E\in \operatorname{FE}( \Lambda ^{1}) \backslash \widehat{{\mathcal{E}}}$; and

\item[(G3)] there exists an action $\theta :\mathbb{T}^{k}\rightarrow
\operatorname{Aut}( B) $ with $\theta _{z}( S_{\lambda }) =z^{d( \lambda )
}S_{\lambda }$ for $z\in \mathbb{T}^{k}$,$\lambda \in \Lambda $.
\end{enumerate}

The homomorphism $\pi _{S}$ obtained from the universal property of $%
C^{\ast }(\Lambda ;{\mathcal{E}})$, is injective.
\end{theorem}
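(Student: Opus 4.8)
The plan is to deduce the theorem from the gauge-invariant uniqueness theorem of Sims, \cite[Theorem 6.1]{Si06}, applied to the satiated set $\overline{\mathcal{E}}$. Since $C^\ast(\Lambda;\mathcal{E})=C^\ast(\Lambda;\overline{\mathcal{E}})$ (by \cite[Corollary 5.6]{Si06} together with Proposition \ref{cRK-family-without-efficient}), the family $\{S_\lambda\}$ is automatically a relative Cuntz-Krieger $(\Lambda;\overline{\mathcal{E}})$-family, and $\pi_S$ is the same homomorphism whether regarded as defined on $C^\ast(\Lambda;\mathcal{E})$ or on $C^\ast(\Lambda;\overline{\mathcal{E}})$. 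Hypotheses (G1) and (G3) are exactly the first and third hypotheses of \cite[Theorem 6.1]{Si06}, so it remains only to verify its second hypothesis: writing $Q_E$ for $\prod_{\lambda\in E}(S_{r(E)}-S_\lambda S_\lambda^\ast)$, I must show $Q_E\neq 0$ for every $E\in\operatorname{FE}(\Lambda)\setminus\overline{\mathcal{E}}$. The point is that (G2) supplies this only for \emph{edge} sets $E\in\operatorname{FE}(\Lambda^1)\setminus\widehat{\mathcal{E}}$, so the real content is to promote the edge statement to one about arbitrary finite exhaustive sets.

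To do this I would consider the collection $\mathcal{G}:=\{E\in\operatorname{FE}(\Lambda):Q_E=0\}$ of all finite exhaustive sets on which the Cuntz-Krieger relation holds in the given family. The key step is to check that $\mathcal{G}$ is satiated, i.e. that it satisfies (S1)--(S4). Condition (S1) is immediate since $Q_F$ equals $Q_E$ times the remaining factors whenever $E\subseteq F$, and (S2)--(S4) are the ``domino'' phenomena already proved for edges in Proposition \ref{efficient-exhaustive-edge} and Proposition \ref{properties-of-ideal}; for paths of arbitrary degree they follow from the same computations using (TCK3) together with \cite[Lemma C.5 and Lemma C.7]{RSY04} and the corresponding lemmas of \cite{Si06}. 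Because $\{S_\lambda\}$ is a relative Cuntz-Krieger $(\Lambda;\mathcal{E})$-family we have $\mathcal{E}\subseteq\mathcal{G}$, and since $\mathcal{G}$ is satiated this forces $\overline{\mathcal{E}}\subseteq\mathcal{G}$.

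It then remains to prove the reverse inclusion $\mathcal{G}\subseteq\overline{\mathcal{E}}$, which is exactly the required vanishing hypothesis in contrapositive form, and here I would invoke the bijection between satiated and efficient sets. First I compute the edge part: if $E\in\mathcal{G}\cap\operatorname{FE}(\Lambda^1)$ and $E\notin\widehat{\mathcal{E}}$, then (G2) gives $Q_E\neq 0$, a contradiction, so $\mathcal{G}\cap\operatorname{FE}(\Lambda^1)\subseteq\widehat{\mathcal{E}}$; conversely every $E\in\widehat{\mathcal{E}}$ lies in $\mathcal{G}$ (choose $F\in\mathcal{E}$ with $F\subseteq E$ and note $Q_E=Q_F\cdot(\cdots)=0$), whence $\mathcal{G}\cap\operatorname{FE}(\Lambda^1)=\widehat{\mathcal{E}}$. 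Since $\mathcal{G}$ is satiated, Proposition \ref{connect-satiated-to-efficient} then gives $\mathcal{G}=\overline{\min(\mathcal{G}\cap\operatorname{FE}(\Lambda^1))}=\overline{\min(\widehat{\mathcal{E}})}=\overline{\mathcal{E}}$, the final equality using $\min(\widehat{\mathcal{E}})=\mathcal{E}$ (Lemma \ref{efficient-min-e-hat}). Thus $E\notin\overline{\mathcal{E}}$ implies $E\notin\mathcal{G}$, i.e. $Q_E\neq 0$, which is precisely the second hypothesis of \cite[Theorem 6.1]{Si06}; applying that theorem yields injectivity of $\pi_S$. The main obstacle is the satiation check for $\mathcal{G}$ --- in particular verifying (S4) for paths of arbitrary degree --- since everything else is bookkeeping once Theorem \ref{bijection-efficient-and-satiated} and Proposition \ref{connect-efficient-to-satiated} are in hand.
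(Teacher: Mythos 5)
Your proposal is correct, and it shares the paper's overall strategy: reduce to Sims's gauge-invariant uniqueness theorem \cite[Theorem 6.1]{Si06} by promoting the edge condition (G2) to the condition $\prod_{\lambda\in E}(S_{r(E)}-S_\lambda S_\lambda^\ast)\neq 0$ for all $E\in\operatorname{FE}(\Lambda)\backslash\overline{\mathcal{E}}$. Where you genuinely differ is in how this promotion is carried out. The paper isolates it as Lemma \ref{properties-of-rCK-efficient-and-satiated} and proves the hard direction by contradiction: assuming some $E\in\operatorname{FE}(\Lambda)\backslash\overline{\mathcal{E}}$ has vanishing product, it forms $\mathcal{E}_1:=E\cup\overline{\mathcal{E}}$, invokes the argument of \cite[Corollary 5.6]{Si06} to get vanishing products on all of $\overline{\mathcal{E}_1}$, and then uses the strict-containment preservation of Corollary \ref{bijection-satiated-and-minimally-satiation} to extract an edge set $F\in\operatorname{FE}(\Lambda^1)\backslash\widehat{\mathcal{E}}$ with vanishing product, contradicting (G2). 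You instead argue directly: the collection $\mathcal{G}$ of all finite exhaustive sets with vanishing product is satiated, its edge part equals $\widehat{\mathcal{E}}$ by (G2) and the (CK) relations, and the reconstruction formula $\mathcal{F}=\overline{\min(\mathcal{F}\cap\operatorname{FE}(\Lambda^1))}$ of Proposition \ref{connect-satiated-to-efficient}, combined with $\min(\widehat{\mathcal{E}})=\mathcal{E}$ from Lemma \ref{efficient-min-e-hat}, identifies $\mathcal{G}=\overline{\mathcal{E}}$ outright. The two arguments draw on the same toolbox (the efficient--satiated correspondence of Section \ref{Section-efficient-and-satiated} plus the satiation-closure of Cuntz-Krieger relations from \cite{Si06}), but your assembly is direct rather than by contradiction, bypasses Corollary \ref{bijection-satiated-and-minimally-satiation} entirely, and yields the slightly stronger conclusion that $\overline{\mathcal{E}}$ is exactly the set of finite exhaustive sets on which the given family satisfies the Cuntz-Krieger relation; the price is the verification that $\mathcal{G}$ satisfies (S1)--(S4) for paths of arbitrary degree, which, as you note, is not new work --- it is precisely the closure property underlying \cite[Corollary 5.6]{Si06} (the general-degree analogues of Propositions \ref{efficient-exhaustive-edge} and \ref{properties-of-ideal}), and the paper's own proof also has to cite that argument, applied to $\mathcal{E}_1$, at the corresponding point.
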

The only difference between our Theorem \ref{the-gauge-invariant-uniqueness-theorem} and Sims's gauge-invariant
uniqueness theorem \cite[Theorem 6.1]{Si06} is (G2), which in \cite{Si06},
(G2) is replaced by the following condition:%
\begin{equation}
\prod_{\lambda \in E}( S_{r( E) }-S_{\lambda }S_{\lambda }^{\ast }) \neq 0%
\text{ for all }E\in \operatorname{FE}( \Lambda ) \backslash \overline{{\mathcal{E}}}%
\text{.}  \label{equ-Sims-gauge-invariant-theorem}
\end{equation}%
Hence in order to show the two uniqueness theorem are identical, it suffices to
show that (G2) is equivalent to \eqref{equ-Sims-gauge-invariant-theorem}.

\begin{lemma}
\label{properties-of-rCK-efficient-and-satiated}Suppose that $\{ S_{\lambda
}:\lambda \in \Lambda \} $ is a relative Cuntz-Krieger $( \Lambda ;{\mathcal{%
E}}) $-family in a $C^{\ast }$-algebra $B$. Then the following two
conditions are equivalent:

\begin{enumerate}
\item[(a)] $\prod_{\lambda \in E}( S_{r( E) }-S_{\lambda }S_{\lambda }^{\ast
}) \neq 0$ for all $E\in \left. \operatorname{FE}( \Lambda ) \right\backslash
\overline{{\mathcal{E}}}$.

\item[(b)] $\prod_{e\in E}( S_{r( E) }-S_{e}S_{e}^{\ast }) \neq 0$ for all $%
E\in \operatorname{FE}( \Lambda ^{1}) \backslash \widehat{{\mathcal{E}}}$.
\end{enumerate}
\end{lemma}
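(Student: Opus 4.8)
The plan is to route both conditions through the single collection
\[
\mathcal{F}_{S}:=\Big\{E\in \operatorname{FE}(\Lambda):\prod_{\lambda \in E}\big(S_{r(E)}-S_{\lambda }S_{\lambda }^{\ast }\big)=0\Big\},
\]
the finite exhaustive sets on which the Cuntz--Krieger product collapses for this particular family. Unwinding the quantifier in (a) and taking contrapositives inside, (a) says precisely that every $E\in \mathcal{F}_{S}$ lies in $\overline{\mathcal{E}}$, i.e.\ $\mathcal{F}_{S}\subseteq \overline{\mathcal{E}}$. Likewise, since each $E\in \operatorname{FE}(\Lambda ^{1})$ consists of edges (so $\prod_{e\in E}=\prod_{\lambda \in E}$), (b) says precisely that $\mathcal{F}_{S}\cap \operatorname{FE}(\Lambda ^{1})\subseteq \widehat{\mathcal{E}}$. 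Thus the lemma reduces to showing these two inclusions are equivalent, and I would prove the two directions separately.

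For (a)$\Rightarrow$(b) I would just intersect with $\operatorname{FE}(\Lambda ^{1})$: if $\mathcal{F}_{S}\subseteq \overline{\mathcal{E}}$ then $\mathcal{F}_{S}\cap \operatorname{FE}(\Lambda ^{1})\subseteq \overline{\mathcal{E}}\cap \operatorname{FE}(\Lambda ^{1})=\widehat{\mathcal{E}}$, the last equality being Proposition \ref{connect-efficient-to-satiated} (which uses that $\mathcal{E}$ is efficient). The substance of the lemma is the converse. Assuming for the moment that $\mathcal{F}_{S}$ is satiated, Lemma \ref{connect-satiated-to-efficient-lemma} gives $\mathcal{F}_{S}\subseteq \overline{(\mathcal{F}_{S}\cap \operatorname{FE}(\Lambda ^{1}))}$; feeding in (b) and using that the satiation operation is monotone yields $\overline{(\mathcal{F}_{S}\cap \operatorname{FE}(\Lambda ^{1}))}\subseteq \overline{\widehat{\mathcal{E}}}=\overline{\mathcal{E}}$, where the final equality is Remark \ref{remark-ehat-esatiated}. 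Composing these inclusions gives $\mathcal{F}_{S}\subseteq \overline{\mathcal{E}}$, which is (a).

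The crux, and the step I expect to be the main obstacle, is therefore the claim that $\mathcal{F}_{S}$ is satiated, i.e.\ closed under (S1)--(S4). Axioms (S1) and (S3) I would dispatch directly: both rest on $\{S_{\lambda }S_{\lambda }^{\ast }\}$ being a commuting family of projections (\cite[Lemma 2.7(i)]{RSY04}) together with the nesting $S_{\lambda }S_{\lambda }^{\ast }\leq S_{\lambda (0,n)}S_{\lambda (0,n)}^{\ast }$ for $0\leq n\leq d(\lambda )$, so that enlarging a set (S1) or truncating its paths (S3) leaves the product zero, each longer factor absorbing the corresponding shorter one. Axioms (S2) and (S4) are exactly the arbitrary-degree counterparts of Proposition \ref{efficient-exhaustive-edge} and Proposition \ref{properties-of-ideal}: the identity $\prod_{g\in \operatorname{Ext}_{\Lambda }(\lambda ;E)}(S_{s(\lambda )}-S_{g}S_{g}^{\ast })=S_{\lambda }^{\ast }\big(\prod_{e\in E}(S_{r(E)}-S_{e}S_{e}^{\ast })\big)(S_{\lambda }S_{\lambda }^{\ast })^{|E|-1}S_{\lambda }$ for (S2), and the factorisation $S_{r(E)}-S_{\lambda }S_{\lambda }^{\ast }=\prod_{f\in E_{\lambda }^{\prime }}(S_{r(E)}-S_{\lambda f}S_{\lambda f}^{\ast })$ via \cite[Lemma C.7]{RSY04} for (S4), go through verbatim once $d(\lambda )$ and $d(e)$ are allowed to be arbitrary, the cross terms being killed by (TCK3) and the factors recombined using commutativity of the range projections. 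These preservation statements are established in \cite{Si06}, so I would either cite them or reproduce the computations; I expect the bookkeeping in (S4), where an entire subset $E^{\prime }\subseteq E$ is replaced simultaneously, to be the most delicate point, handled directly or by iterating the one-path replacement of Proposition \ref{properties-of-ideal}.
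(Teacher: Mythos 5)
Your proof is correct, but it assembles the hard direction differently from the paper. The implication (a)$\Rightarrow$(b) is identical in both: intersect with $\operatorname{FE}(\Lambda^{1})$ and use $\widehat{\mathcal{E}}=\overline{\mathcal{E}}\cap \operatorname{FE}(\Lambda^{1})$ from Proposition~\ref{connect-efficient-to-satiated}. For (b)$\Rightarrow$(a), the paper argues by contradiction: given $E\in \operatorname{FE}(\Lambda)\backslash \overline{\mathcal{E}}$ whose product vanishes, it forms $\mathcal{E}_{1}:=\{E\}\cup \overline{\mathcal{E}}$, propagates the vanishing relations to $\overline{\mathcal{E}_{1}}$ by the argument of \cite[Corollary~5.6]{Si06}, and then invokes the strict-containment property of $\mathcal{F}\mapsto \mathcal{F}\cap \operatorname{FE}(\Lambda^{1})$ (Corollary~\ref{bijection-satiated-and-minimally-satiation}) to extract an edge set $F\in \operatorname{FE}(\Lambda^{1})\backslash \widehat{\mathcal{E}}$ with vanishing product, contradicting (b). You instead work directly with the annihilated collection $\mathcal{F}_{S}$: once it is known to be satiated, Lemma~\ref{connect-satiated-to-efficient-lemma} applied to $\mathcal{F}_{S}$ itself, together with monotonicity of satiation and $\overline{\widehat{\mathcal{E}}}=\overline{\mathcal{E}}$ (Remark~\ref{remark-ehat-esatiated}), yields (a) with no contradiction and no appeal to Corollary~\ref{bijection-satiated-and-minimally-satiation}. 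The two routes rest on the same pillars, since your claim that $\mathcal{F}_{S}$ is satiated is formally equivalent to the propagation statement the paper borrows from \cite[Corollary~5.6]{Si06} (one direction is obtained by taking $\mathcal{G}=\mathcal{F}_{S}$ in ``vanishing on $\mathcal{G}$ implies vanishing on $\overline{\mathcal{G}}$''), and your sketched verifications of (S1)--(S4) are sound: commuting range projections handle (S1) and (S3), the $S_{\lambda}^{\ast}(\cdot)S_{\lambda}$ computation of Proposition~\ref{efficient-exhaustive-edge} goes through verbatim for arbitrary degrees because the edge hypothesis there is only needed to conclude that $\operatorname{Ext}_{\Lambda}(f;E)$ consists of edges, and (S4) follows from \cite[Lemma~C.7]{RSY04} applied factor by factor; indeed (S4) is easier than you anticipate, since the replacements at distinct $\lambda \in E^{\prime}$ occur in independent commuting factors and, unlike (E3), involve full paths $\lambda E^{\prime}_{\lambda}$ rather than truncations. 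What your assembly buys is transparency: both conditions become inclusions of collections, and the only nontrivial input beyond \cite{Si06} is the edge-reduction Lemma~\ref{connect-satiated-to-efficient-lemma}, which the paper reaches only indirectly through Theorem~\ref{bijection-efficient-and-satiated} and its corollary.
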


\begin{proof}
To show (a)$\Rightarrow $(b), it suffices to show $\operatorname{FE}(\Lambda
^{1})\backslash \widehat{{\mathcal{E}}}\subseteq \operatorname{FE}(\Lambda
)\backslash \overline{{\mathcal{E}}}$. Take $E\in \operatorname{FE}(\Lambda
^{1})\backslash \widehat{{\mathcal{E}}}$. Then $E\in \operatorname{FE}(\Lambda )$.
Since $\widehat{{\mathcal{E}}}=\overline{{\mathcal{E}}}\cap \operatorname{FE}%
(\Lambda ^{1})$ (Proposition \ref{connect-efficient-to-satiated}), then $%
E\notin \widehat{{\mathcal{E}}}$ implies $E\notin \overline{{\mathcal{E}}}$.
Therefore $E\in \left. \operatorname{FE}(\Lambda )\right\backslash \overline{{%
\mathcal{E}}}$ and $\operatorname{FE}(\Lambda ^{1})\backslash \widehat{{\mathcal{E}}}%
\subseteq \operatorname{FE}(\Lambda )\backslash \overline{{\mathcal{E}}}$.

For (b)$\Rightarrow $(a), suppose for contradiction that there
exists $E\in \operatorname{FE}( \Lambda ) \backslash \overline{{\mathcal{E}}}$ such
that $\prod_{e\in E}( S_{r( E) }-S_{e}S_{e}^{\ast }) =0$. Consider the set ${%
\mathcal{E}}_{1}:=E\cup \overline{{\mathcal{E}}}$. Since $\{ S_{\lambda
}:\lambda \in \Lambda \} $ is a relative Cuntz-Krieger $( \Lambda ;{\mathcal{%
E}}) $-family and $\prod_{e\in E}( S_{r( E) }-S_{e}S_{e}^{\ast }) =0 $, then
for $F\in {\mathcal{E}}_{1}$, $\prod_{f\in F}(S_{r( F) }-S_{f}S_{f}^{\ast
})=0$. Following the argument of \cite[Corollary 5.6]{Si06}, this implies
\begin{equation}
\prod_{f\in F}(S_{r( F) }-S_{f}S_{f}^{\ast })=0\text{ for }F\in \overline{{%
\mathcal{E}}_{1}}.  \label{equ-F-in-epsilon}
\end{equation}%
Since $E\in \operatorname{FE}( \Lambda ) \backslash \overline{{\mathcal{E}}}$ and ${%
\mathcal{E}}_{1}=E\cup \overline{{\mathcal{E}}}$, we have $\overline{{%
\mathcal{E}}}\varsubsetneq {\mathcal{E}}_{1}$. So $\overline{{\mathcal{E}}}%
\varsubsetneq \overline{{\mathcal{E}}_{1}}$ since ${\mathcal{E}}%
_{1}\subseteq \overline{{\mathcal{E}}_{1}}$. By Proposition \ref%
{connect-efficient-to-satiated} and Corollary \ref%
{bijection-satiated-and-minimally-satiation}, $\widehat{{\mathcal{E}}}=(
\overline{{\mathcal{E}}}\cap \operatorname{FE}( \Lambda ^{1}) ) \varsubsetneq (
\overline{{\mathcal{E}}_{1}}\cap \operatorname{FE}( \Lambda ^{1}) ) $. Take $F\in
\overline{{\mathcal{E}}_{1}}\cap \operatorname{FE}( \Lambda ^{1}) $ with $F\notin
\widehat{{\mathcal{E}}}$. Since $F\in \overline{{\mathcal{E}}_{1}}$, by %
\eqref{equ-F-in-epsilon}, $\prod_{f\in F}(S_{r( F) }-S_{f}S_{f}^{\ast })=0$,
which contradicts Condition (b) since $F\in \operatorname{FE}( \Lambda ^{1})
\backslash \widehat{{\mathcal{E}}}$. The conclusion follows.
\end{proof}

% \begin{proof}[Proof of Theorem \protect\ref%
% {the-gauge-invariant-uniqueness-theorem}]
% It follows from Lemma \ref{properties-of-rCK-efficient-and-satiated} and
% \cite[Theorem 6.1]{Si06}.
% \end{proof}

An advantage of our version of the theorem is that
our condition (G2) is more checkable; there are fewer sets to consider.

\subsection{Gauge-invariant ideals in a relative Cuntz-Krieger algebra}

\label{Subsection-gauge-invariant-ideals}Another  application of
Theorem \ref{bijection-efficient-and-satiated} is to give a complete listing
of the gauge-invariant ideals in a relative Cuntz-Krieger algebra (Theorem %
\ref{gauge-invariant-ideals-efficient}). This is a simplification of Theorem
4.6 of \cite{SWW14} (see Remark \ref{gauge-invariant-ideals-efficient-remark}%
). First we give some preliminary notation and results.

Suppose that $\Lambda $ is a finitely aligned $k$-graph and that ${\mathcal{E%
}}\subseteq \operatorname{FE}(\Lambda ^{1})$. We define a relation $\geq $ on $%
\Lambda ^{0}$ by $w\geq v$ if and only if $v\Lambda w\neq \emptyset $. A
subset $H\subseteq \Lambda ^{0}$ is \emph{hereditary} if $v\in H$ and $w\geq
v$ imply $w\in H$; and $H$ is ${\mathcal{E}}$\emph{-saturated} if, whenever $%
v\in \Lambda ^{0}$ and $E\in v{\mathcal{E}}$ with $s(E)\subseteq H$, we have
$v\in H$. For a hereditary subset $H\subseteq \Lambda ^{0}$, the subcategory%
\begin{equation*}
\Lambda \backslash \Lambda H:=\{ \lambda \in \Lambda :s(\lambda )\notin H\}
\end{equation*}%
is a finitely aligned $k$-graph (see \cite[Lemma 4.1]{Si06G}).

\begin{lemma}
Suppose that $\Lambda $ is a finitely aligned $k$-graph, that $\mathcal{E}%
\subseteq \operatorname{FE}( \Lambda ^{1}) $ is an efficient set, and that $%
H\subseteq \Lambda ^{0}$ is a $\mathcal{E}$-saturated hereditary set. Then
\begin{equation*}
\mathcal{E}_{H}:=\{ E\backslash EH:E\in \mathcal{E}\}
\end{equation*}%
is a subset of $\operatorname{FE}(( \Lambda \backslash \Lambda H) ^{1})$.
\end{lemma}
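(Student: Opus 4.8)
The plan is to show that for each $E \in \mathcal{E}$, the set $E \backslash EH$ is a finite exhaustive set of edges in the $k$-graph $\Lambda \backslash \Lambda H$ with range $r(E)$. Since each $E \in \mathcal{E} \subseteq \operatorname{FE}(\Lambda^1)$ consists of edges of $\Lambda$ and $E \backslash EH$ simply removes those edges landing in $H$, every element of $E \backslash EH$ is an edge of $\Lambda$ with source outside $H$, hence an edge of $\Lambda \backslash \Lambda H$. Finiteness is immediate because $E$ is finite. So the real content is to verify that $E \backslash EH$ is nonempty and exhaustive in $\Lambda \backslash \Lambda H$, and to handle the fact that the range $r(E)$ must itself lie outside $H$ for $E \backslash EH$ to be a subset of $r(E)(\Lambda \backslash \Lambda H)^1$.

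First I would observe that if $r(E) \in H$, then by heredity every vertex $w$ with $w \geq r(E)$ — in particular every source of an edge in $E$, since $s(e) \Lambda r(E) \ni$ the edge $e$ reversed, i.e. $r(E) \Lambda s(e) \neq \emptyset$ means $s(e) \geq r(E)$ — lands in $H$; more directly, $r(E) \notin H$ must be established or the statement restricted. The natural fix, which I expect the authors intend, is that we only consider those $E$ with $r(E) \notin H$: if $r(E) \in H$ then $E \backslash EH$ is not a subset of $(\Lambda \backslash \Lambda H)$ at range $r(E)$ because $r(E)$ is not even a vertex of $\Lambda \backslash \Lambda H$. So I would first note that the claim is really about those $E \in \mathcal{E}$ with $r(E) \notin H$, and for such $E$ proceed to check exhaustiveness.

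For exhaustiveness, take any $\lambda \in r(E)(\Lambda \backslash \Lambda H)$, so $\lambda \in r(E)\Lambda$ with $s(\lambda) \notin H$. Since $E$ is exhaustive in $\Lambda$, there exists $e \in E$ with $\Lambda^{\min}(\lambda, e) \neq \emptyset$; choose $(\lambda', e') \in \Lambda^{\min}(\lambda, e)$ so that $\lambda \lambda' = e e'$. The key step is to argue that this $e$ can be taken in $E \backslash EH$, i.e. that $s(e) \notin H$, and that the minimal common extension survives in $\Lambda \backslash \Lambda H$. Since $s(\lambda) \notin H$ and $\lambda \lambda' = e e'$ has source $s(\lambda') = s(e')$, heredity of $H$ applied to the edge $e'$ (or to $e e'$) shows $s(e') \notin H$ would force... — here I would use that $H$ is hereditary: $s(\lambda \lambda') \notin H$ because $s(\lambda \lambda') \geq s(\lambda)$ is false in general, so instead I use that $s(\lambda') \notin H$ since $s(\lambda') \geq s(\lambda)$ via $\lambda'$, combined with heredity, to conclude $s(e) \notin H$ (as $s(e) \geq s(e') = s(\lambda')$). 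This gives $e \in E \backslash EH$ and $(\lambda', e')$ witnesses $\Lambda^{\min}(\lambda, e) \neq \emptyset$ inside $\Lambda \backslash \Lambda H$.

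The main obstacle I anticipate is the bookkeeping in this last step: carefully tracking the direction of the heredity relation $\geq$ (recall $w \geq v$ iff $v \Lambda w \neq \emptyset$) to conclude that a minimal common extension of $\lambda$ and $e$ having source outside $H$ forces $s(e)$ itself outside $H$, so that the witnessing edge genuinely lies in $E \backslash EH$ and the common extension is a morphism of $\Lambda \backslash \Lambda H$. I would also need to confirm nonemptiness of $E \backslash EH$, which follows because $r(E) \notin H$ together with $\mathcal{E}$-saturation: if $E \backslash EH$ were empty then $s(E) \subseteq H$, and $\mathcal{E}$-saturation would force $r(E) \in H$, a contradiction. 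This saturation argument is the cleanest way to secure nonemptiness and is precisely where the hypothesis that $H$ is $\mathcal{E}$-saturated gets used.
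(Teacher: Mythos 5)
Your proposal handles the routine parts correctly (each $E\backslash EH$ consists of edges of $\Lambda \backslash \Lambda H$ and is finite), and your observation that $\mathcal{E}$-saturation together with heredity pins down exactly when $E\backslash EH$ is empty (namely when $r(E)\in H$) is a fair point that the paper's proof glosses over. But the heart of the lemma is exhaustiveness in $\Lambda \backslash \Lambda H$, and there your argument breaks on the direction of heredity. With the paper's convention, $w\geq v$ means $v\Lambda w\neq \emptyset$, and heredity says: $v\in H$ and $w\geq v$ imply $w\in H$. So membership in $H$ propagates from the range of a path to its source; equivalently, \emph{non}-membership propagates from source to range. From $s(\lambda )\notin H$ and $s(\lambda ')\geq s(\lambda )$ you may \emph{not} conclude $s(\lambda ')\notin H$ --- that is the converse of heredity, and it is false in general. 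A path $\lambda \in r(E)(\Lambda \backslash \Lambda H)$ can perfectly well have the property that every minimal common extension of $\lambda $ with every element of $E\backslash EH$ has source inside $H$; nothing in your argument excludes this. Consequently, the arbitrary $e\in E$ you pick with $\Lambda ^{\min }(\lambda ,e)\neq \emptyset $ need not lie in $E\backslash EH$, and even when it does, the witnessing pair $(\lambda ',e')$ need not survive in $\Lambda \backslash \Lambda H$, since $(\Lambda \backslash \Lambda H)^{\min }(\lambda ,e)$ only sees common extensions whose source avoids $H$.

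Ruling out that bad scenario is precisely where efficiency and saturation must enter, and your proposal never invokes (E2) at all, while using saturation only for nonemptiness. The paper's proof runs as follows: write $E=F\backslash FH$ with $F\in \mathcal{E}$. If $\lambda \in F\Lambda $, say $\lambda =f\mu $ with $f\in F$, then the \emph{correct} direction of heredity ($s(\mu )=s(\lambda )\notin H$ forces $r(\mu )=s(f)\notin H$) gives $f\in E$, and $\lambda $ itself is a common extension of $\lambda $ and $f$ in $\Lambda \backslash \Lambda H$. Otherwise $\lambda \in r(F)\Lambda \backslash F\Lambda $, and one argues by contradiction: if $(\Lambda \backslash \Lambda H)^{\min }(\lambda ,e)=\emptyset $ for every $e\in E$, then $\operatorname{Ext}_{\Lambda }(\lambda ;E)\subseteq \Lambda H$, while heredity (again in the correct direction, applied to edges of $FH$, whose ranges lie in $H$) gives $\operatorname{Ext}_{\Lambda }(\lambda ;FH)\subseteq \Lambda H$; hence $s(\operatorname{Ext}_{\Lambda }(\lambda ;F))\subseteq H$. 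Now (E2) produces $F'\in \mathcal{E}$ with $F'\subseteq \operatorname{Ext}_{\Lambda }(\lambda ;F)$, and $\mathcal{E}$-saturation applied to $F'$ forces $s(\lambda )=r(F')\in H$, contradicting $\lambda \in \Lambda \backslash \Lambda H$. So saturation and (E2) are jointly what make exhaustiveness work; without that contradiction argument your proof cannot be repaired.
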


\begin{proof}
Take $E\in \mathcal{E}_{H}$. Write $E=F\backslash FH$ with $F\in \mathcal{E}$%
. Since $F\subseteq r(E)\Lambda ^{1}$ and $F$ is finite, we have $E\subseteq
r(E)(\Lambda \backslash \Lambda H)^{1}$ and $E$ is finite.

To show that $E~$is exhaustive, take $\lambda \in r(E)(\Lambda \backslash
\Lambda H)$. If $\lambda \in F\Lambda $, then $s(\lambda )\notin H$ implies $%
\lambda \in E\Lambda $ and $\operatorname{Ext}_{\Lambda \backslash \Lambda
H}(\lambda ;E)\neq \emptyset $, as required. So suppose $\lambda \in
r(F)\Lambda \backslash F\Lambda $. Suppose for contradiction that $(\Lambda
\backslash \Lambda H)^{\min }(\lambda ,e)=\emptyset $ for every $e\in E$.
Hence $\Lambda ^{\min }(\lambda ,e)\subseteq \Lambda H\times \Lambda H$ for $%
e\in E$, so $\operatorname{Ext}_{\Lambda }(\lambda ;E)\subseteq \Lambda H$. Note
that $\operatorname{Ext}_{\Lambda }(\lambda ;F)=\operatorname{Ext}_{\Lambda }(\lambda
;E)\cup \operatorname{Ext}_{\Lambda }(\lambda ;FH)$ and since $H$ is hereditary, $%
\operatorname{Ext}_{\Lambda }(\lambda ;FH)\subseteq \Lambda H$. So $\operatorname{Ext}%
_{\Lambda }(\lambda ;F)\subseteq \Lambda H$ and $s(\operatorname{Ext}_{\Lambda
}(\lambda ;F))\subseteq H$. Because $\lambda \in r(F)\Lambda \backslash
F\Lambda $ and $F\in \mathcal{E}$, by (E2), there exists $F^{\prime }\in
\mathcal{E}$ with $F^{\prime }\subseteq \operatorname{Ext}_{\Lambda }(\lambda ;F)$%
. Because $H$ is $\mathcal{E}$-saturated, $s(F^{\prime })\subseteq s(%
\operatorname{Ext}_{\Lambda }(\lambda ;F))\subseteq H$ and $F^{\prime }\in
\mathcal{E}$, then $s(\lambda )=r(F^{\prime })\in H$, which contradicts $%
\lambda \in r(E)(\Lambda \backslash \Lambda H)$. So there exists $e\in E$
such that $(\Lambda \backslash \Lambda H)^{\min }(\lambda ,e)\neq \emptyset $%
. Thus $E$ is exhaustive and $E\in \operatorname{FE}((\Lambda \backslash \Lambda
H)^{1})$.
\end{proof}

Now we state our classification theorem.

\begin{theorem}
\label{gauge-invariant-ideals-efficient}Let $\Lambda $ be a
finitely aligned $k$-graph such that ${\mathcal{E}}\subseteq \operatorname{FE}%
(\Lambda ^{1})$ is an efficient set. Suppose that $\{s_{\lambda }^{{\mathcal{%
E}}}:\lambda \in \Lambda \}$ is the universal relative Cuntz-Krieger $%
(\Lambda ;{\mathcal{E}})$-family. For $H\subseteq \Lambda ^{0}$ and $%
\mathcal{B}\subseteq \operatorname{FE}((\Lambda \backslash \Lambda H)^{1})$, define $%
I_{H,\mathcal{B}}$ to be the ideal generated by
\begin{equation*}
\{s_{v}^{\mathcal{E}}:v\in H\}\cup \Big\{\prod_{e\in E}(s_{r(E)}^{\mathcal{E}%
}-s_{e}^{\mathcal{E}}s_{e}^{\mathcal{E}\ast }):E\in \mathcal{B}\Big\}\text{.}
\end{equation*}%
Then $\Phi :(H,\mathcal{B})\mapsto I_{H,\mathcal{B}}$ is a bijection between
the set of pairs $(H,\mathcal{B})$ such that $H$ is a $\mathcal{E}$%
-saturated hereditary set and $\mathcal{B}\subseteq \operatorname{FE}((\Lambda
\backslash \Lambda H)^{1})$ is an efficient set such that $\mathcal{E}%
_{H}\subseteq \widehat{\mathcal{B}}$, and the gauge-invariant ideals in $%
{\normalsize C}^{\ast }(\Lambda ;{\mathcal{E}})$. 

For any gauge-invariant
ideal $I$, define%
\begin{equation*}
H_{I}:=\{v\in \Lambda ^{0}:s_{v}^{\mathcal{E}}\in I\}\text{,}
\end{equation*}%
\begin{equation*}
\mathcal{B}_{I}:=\min \Big(\Big\{E\in \operatorname{FE}((\Lambda \backslash \Lambda
H_{I})^{1}):\prod_{e\in E}(s_{r(E)}^{\mathcal{E}}-s_{e}^{\mathcal{E}}s_{e}^{%
\mathcal{E}\ast })\in I\Big\}\Big)\text{.}
\end{equation*}%
Then the inverse of $\Phi$ is given by $I\mapsto (H_{I},\mathcal{B}_{I})$.
\end{theorem}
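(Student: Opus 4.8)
The plan is to deduce this classification from the satiated-set version in \cite[Theorem 4.6]{SWW14} by translating its parameters through the bijection of Theorem \ref{bijection-efficient-and-satiated}. Recall that \cite[Theorem 4.6]{SWW14} provides a bijection $\Psi\colon(H,\mathcal{F})\mapsto I_{H,\mathcal{F}}$ between the pairs consisting of an $\mathcal{E}$-saturated hereditary set $H\subseteq\Lambda^{0}$ and a satiated set $\mathcal{F}\subseteq\operatorname{FE}(\Lambda\backslash\Lambda H)$ containing the satiation $\overline{\mathcal{E}_{H}}$ of the restricted collection, and the gauge-invariant ideals of $C^{\ast}(\Lambda;\mathcal{E})=C^{\ast}(\Lambda;\overline{\mathcal{E}})$, where $I_{H,\mathcal{F}}$ is generated by $\{s_{v}^{\mathcal{E}}:v\in H\}$ together with the all-degree products $\prod_{\lambda\in E}(s_{r(E)}^{\mathcal{E}}-s_{\lambda}^{\mathcal{E}}s_{\lambda}^{\mathcal{E}\ast})$ for $E\in\mathcal{F}$. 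The task is then to show that $\Phi$ factors as $\Psi$ precomposed with a parameter translation, and to read off its inverse.

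The central bridge is the claim that, for every $\mathcal{E}$-saturated hereditary $H$ and every efficient $\mathcal{B}\subseteq\operatorname{FE}((\Lambda\backslash\Lambda H)^{1})$, the ideals $I_{H,\mathcal{B}}$ and $I_{H,\overline{\mathcal{B}}}$ coincide. One inclusion is immediate from $\mathcal{B}\subseteq\widehat{\mathcal{B}}\subseteq\overline{\mathcal{B}}$. For the reverse, I would pass to the quotient $C^{\ast}(\Lambda;\mathcal{E})/I_{H,\mathcal{B}}$: the images of $\{s_{\lambda}^{\mathcal{E}}:s(\lambda)\notin H\}$ form a relative Cuntz-Krieger $(\Lambda\backslash\Lambda H;\mathcal{B})$-family, the vertices in $H$ having been killed and the edge products over $\mathcal{B}$ vanishing by construction. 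Applying Proposition \ref{cRK-family-without-efficient} and the equality $\overline{\widehat{\mathcal{B}}}=\overline{\mathcal{B}}$ of Remark \ref{remark-ehat-esatiated} in the graph $\Lambda\backslash\Lambda H$ (as in \cite[Corollary 5.6]{Si06}), this family is automatically a relative $(\Lambda\backslash\Lambda H;\overline{\mathcal{B}})$-family, so the all-degree products over $\overline{\mathcal{B}}$ vanish modulo $I_{H,\mathcal{B}}$, yielding $I_{H,\overline{\mathcal{B}}}\subseteq I_{H,\mathcal{B}}$.

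With this in hand, fix $H$ and apply Theorem \ref{bijection-efficient-and-satiated} to the finitely aligned $k$-graph $\Lambda\backslash\Lambda H$: the map $\mathcal{B}\mapsto\overline{\mathcal{B}}$ is a bijection from efficient sets onto satiated sets, with inverse $\mathcal{F}\mapsto\min(\mathcal{F}\cap\operatorname{FE}((\Lambda\backslash\Lambda H)^{1}))$. I would then check that the domain constraint $\mathcal{E}_{H}\subseteq\widehat{\mathcal{B}}$ corresponds exactly to the containment hypothesis of \cite[Theorem 4.6]{SWW14} on $\mathcal{F}=\overline{\mathcal{B}}$: since $\mathcal{E}_{H}$ consists of edge sets and $\widehat{\mathcal{B}}=\overline{\mathcal{B}}\cap\operatorname{FE}((\Lambda\backslash\Lambda H)^{1})$ by Proposition \ref{connect-efficient-to-satiated}, we have $\mathcal{E}_{H}\subseteq\widehat{\mathcal{B}}\iff\mathcal{E}_{H}\subseteq\overline{\mathcal{B}}\iff\overline{\mathcal{E}_{H}}\subseteq\overline{\mathcal{B}}$. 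Composing this translation with $\Psi$ and using $I_{H,\mathcal{B}}=I_{H,\overline{\mathcal{B}}}$ shows that $\Phi=(H,\mathcal{B})\mapsto I_{H,\overline{\mathcal{B}}}=\Psi(H,\overline{\mathcal{B}})$ is a bijection onto the gauge-invariant ideals.

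The inverse is then obtained by composing the inverse of $\Psi$ with the inverse translation. The formula $H_{I}=\{v:s_{v}^{\mathcal{E}}\in I\}$ is exactly the hereditary set returned by \cite[Theorem 4.6]{SWW14}, which returns the satiated set $\mathcal{F}_{I}=\{E\in\operatorname{FE}(\Lambda\backslash\Lambda H_{I}):\prod_{\lambda\in E}(s_{r(E)}^{\mathcal{E}}-s_{\lambda}^{\mathcal{E}}s_{\lambda}^{\mathcal{E}\ast})\in I\}$. Intersecting with the edge sets and using the bridge once more to replace all-degree products by edge products inside $I$ gives $\mathcal{F}_{I}\cap\operatorname{FE}((\Lambda\backslash\Lambda H_{I})^{1})=\{E\in\operatorname{FE}((\Lambda\backslash\Lambda H_{I})^{1}):\prod_{e\in E}(s_{r(E)}^{\mathcal{E}}-s_{e}^{\mathcal{E}}s_{e}^{\mathcal{E}\ast})\in I\}$, whose $\min$ is precisely $\mathcal{B}_{I}$; thus the inverse translation carries $\mathcal{F}_{I}$ to $\mathcal{B}_{I}$, as claimed. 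I expect the main obstacle to be the bridge lemma of the second paragraph: one must carefully identify which generators survive in the quotient and verify the relative Cuntz-Krieger axioms for the restricted graph $\Lambda\backslash\Lambda H$, and (a related subtlety) confirm that the containment condition in \cite{SWW14}, pulled back along the translation, is literally $\mathcal{E}_{H}\subseteq\widehat{\mathcal{B}}$ rather than a superficially different condition on the satiation of $\mathcal{E}_{H}$.
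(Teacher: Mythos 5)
Your strategy is exactly the alternative route that Remark \ref{gauge-invariant-ideals-efficient-remark} mentions and the paper deliberately declines to take: deduce the classification from \cite[Theorem 4.6]{SWW14} by translating its parameters through the bijection of Theorem \ref{bijection-efficient-and-satiated}. The paper instead proves the theorem directly (modelled on \cite[Theorem 4.9]{CBMS}): Lemma \ref{gauge-invariant-ideal-lemma} shows $(H_I,\mathcal{B}_I)$ is an admissible pair, surjectivity of $\Phi$ is obtained by applying Theorem \ref{the-gauge-invariant-uniqueness-theorem} to the quotient family over $\Gamma:=\Lambda\backslash\Lambda H_I$, and injectivity by extending the universal $(\Gamma;\mathcal{B})$-family by zero to a relative $(\Lambda;\mathcal{E})$-family. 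Within your route, the bridge lemma $I_{H,\mathcal{B}}=I_{H,\overline{\mathcal{B}}}$ is argued correctly, and the recovery of $\mathcal{B}_I$ from the satiated set returned by \cite{SWW14} is fine (for a set of edges the all-degree product and the edge product are literally the same element, so no bridge is needed there).

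The genuine gap is in your parameter dictionary. Theorem 4.6 of \cite{SWW14} has to be applied to the satiated collection $\overline{\mathcal{E}}$, since that is the collection presenting $C^{\ast}(\Lambda;\mathcal{E})=C^{\ast}(\Lambda;\overline{\mathcal{E}})$ in their framework; consequently the pairs it produces consist of a hereditary $H$ that is $\overline{\mathcal{E}}$-saturated together with a satiated $\mathcal{F}\subseteq\operatorname{FE}(\Lambda\backslash\Lambda H)$ containing the restricted collection $\{E\backslash EH:E\in\overline{\mathcal{E}}\}$. You silently rewrite both conditions in terms of the efficient set $\mathcal{E}$. On the $H$ side you need: $H$ hereditary and $\mathcal{E}$-saturated implies $H$ is $\overline{\mathcal{E}}$-saturated (the converse is trivial); this is true but not free, and the natural proof constructs the extension-by-zero family $S_{\lambda}:=s_{\lambda}^{\mathcal{B}}$ for $s(\lambda)\notin H$, $S_{\lambda}:=0$ otherwise, verifies it is a relative $(\Lambda;\mathcal{E})$-family so that it satisfies the $\overline{\mathcal{E}}$-relations by \cite[Corollary 5.6]{Si06}, and invokes Proposition \ref{properties-of-relative-CK-algebra}(1) to see $S_{v}\neq 0$ for $v\notin H$ --- i.e.\ it reproduces the paper's injectivity argument. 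On the $\mathcal{B}$ side, your chain $\mathcal{E}_{H}\subseteq\widehat{\mathcal{B}}\iff\overline{\mathcal{E}_{H}}\subseteq\overline{\mathcal{B}}$ is correct (via Proposition \ref{connect-efficient-to-satiated} applied in $\Gamma$), but it matches the hypothesis of \cite{SWW14} only after one also proves that restriction commutes with satiation in the sense $\{E\backslash EH:E\in\overline{\mathcal{E}}\}\subseteq\overline{\mathcal{E}_{H}}$; you flag this as a subtlety but give no argument, and it is a real lemma (it even presupposes the $H$-side fact, which you do not mention at all, in order for the restricted sets to lie in $\operatorname{FE}(\Gamma)$). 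Until these two reconciliations are supplied the translation does not go through; once they are, your proof is correct --- which is precisely why the paper remarks that this route ``takes about the same amount of effort'' as the direct one.
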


The rest of this subsection is devoted to proving Theorem \ref%
{gauge-invariant-ideals-efficient}.

\begin{lemma}
\label{gauge-invariant-ideal-lemma}Suppose that $I$ is an ideal in $%
{\normalsize C}^{\ast }(\Lambda ;{\mathcal{E}})$. Then, $H_{I}$ is $\mathcal{%
E}$-saturated hereditary and $\mathcal{B}_{I}$ is an efficient set with $%
\mathcal{E}_{H_{I}}\subseteq \widehat{\mathcal{B}_{I}}$.
\end{lemma}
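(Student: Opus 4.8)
The plan is to establish the three assertions of the lemma separately: that $H_I$ is hereditary, that $H_I$ is $\mathcal{E}$-saturated, that $\mathcal{B}_I$ is efficient, and finally that $\mathcal{E}_{H_I}\subseteq\widehat{\mathcal{B}_I}$. Throughout I would exploit the fact that $I$ is an ideal, so that membership in $I$ is preserved under multiplication by arbitrary elements of $C^\ast(\Lambda;\mathcal{E})$, together with the relations (TCK1)--(TCK3) and (CK) satisfied by the universal family $\{s_\lambda^{\mathcal{E}}\}$.

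First I would verify that $H_I$ is \emph{hereditary}. Suppose $v\in H_I$ and $w\geq v$, so there exists $\lambda\in v\Lambda w$. Then $s_\lambda^{\mathcal{E}}=s_v^{\mathcal{E}}s_\lambda^{\mathcal{E}}$ lies in $I$, whence $s_w^{\mathcal{E}}=s_\lambda^{\mathcal{E}\ast}s_\lambda^{\mathcal{E}}\in I$ by (TCK3) applied to $\Lambda^{\min}(\lambda,\lambda)=\{(s(\lambda),s(\lambda))\}$, giving $w\in H_I$. Next, for \emph{$\mathcal{E}$-saturation}, take $v\in\Lambda^0$ and $E\in v\mathcal{E}$ with $s(E)\subseteq H_I$. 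Then $s_{s(e)}^{\mathcal{E}}\in I$ for each $e\in E$, so each $s_e^{\mathcal{E}}s_e^{\mathcal{E}\ast}=s_e^{\mathcal{E}}s_{s(e)}^{\mathcal{E}}s_e^{\mathcal{E}\ast}\in I$; since $E\in\mathcal{E}$, the relation (CK) gives $\prod_{e\in E}(s_v^{\mathcal{E}}-s_e^{\mathcal{E}}s_e^{\mathcal{E}\ast})=0$, and expanding this product modulo $I$ collapses every term containing a factor $s_e^{\mathcal{E}}s_e^{\mathcal{E}\ast}$ into $I$, leaving $s_v^{\mathcal{E}}\in I$, so $v\in H_I$.

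For the claim that $\mathcal{B}_I$ is \emph{efficient}, I would work inside $\Lambda\backslash\Lambda H_I$, which is finitely aligned, and verify (E1)--(E3) for $\mathcal{B}_I=\min(\mathcal{G})$, where $\mathcal{G}$ denotes the set of $E\in\operatorname{FE}((\Lambda\backslash\Lambda H_I)^1)$ with $\prod_{e\in E}(s_{r(E)}^{\mathcal{E}}-s_e^{\mathcal{E}}s_e^{\mathcal{E}\ast})\in I$. Property (E1) is automatic from $\min$. For (E2) and (E3), the strategy is to show that $\mathcal{G}$ is closed under the $\operatorname{Ext}$ and substitution operations \emph{modulo} $I$: Proposition~\ref{efficient-exhaustive-edge} and Proposition~\ref{properties-of-ideal} both establish identities between products of the form $\prod(s_{r(E)}^{\mathcal{E}}-s_e^{\mathcal{E}}s_e^{\mathcal{E}\ast})$ that remain valid after passing to the quotient $C^\ast(\Lambda;\mathcal{E})/I$; since the defining products lie in $I$, these propositions (applied to the family $\{s_\lambda^{\mathcal{E}}+I\}$ in the quotient) show the relevant $\operatorname{Ext}_{\Lambda\backslash\Lambda H_I}(f;E)$ and $E_F$ again land in $\mathcal{G}$. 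Passing from $\mathcal{G}$ to $\min(\mathcal{G})$ then supplies, for any such output set, a subset lying in $\mathcal{B}_I$, which is exactly what (E2) and (E3) demand. A technical point requiring care is that the $\operatorname{Ext}$ and substitution constructions are carried out in $\Lambda\backslash\Lambda H_I$ rather than $\Lambda$, so I must check that the vertices and edges involved avoid $H_I$; this is where heredity of $H_I$ is used to guarantee the constructions stay inside the subgraph.

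Finally, for $\mathcal{E}_{H_I}\subseteq\widehat{\mathcal{B}_I}$, take $E\backslash EH_I$ with $E\in\mathcal{E}$; the preceding lemma shows $E\backslash EH_I\in\operatorname{FE}((\Lambda\backslash\Lambda H_I)^1)$, and since the omitted edges land in $H_I$ their projections $s_e^{\mathcal{E}}s_e^{\mathcal{E}\ast}$ lie in $I$, so the relation $\prod_{e\in E}(s_{r(E)}^{\mathcal{E}}-s_e^{\mathcal{E}}s_e^{\mathcal{E}\ast})=0$ forces $\prod_{e\in E\backslash EH_I}(s_{r(E)}^{\mathcal{E}}-s_e^{\mathcal{E}}s_e^{\mathcal{E}\ast})\in I$, placing $E\backslash EH_I$ in $\mathcal{G}$; then minimality gives an element of $\mathcal{B}_I$ contained in it, so $E\backslash EH_I\in\widehat{\mathcal{B}_I}$. \textbf{The main obstacle} I anticipate is the verification of (E2) and (E3) for $\mathcal{B}_I$: the content is not the combinatorics (which Propositions~\ref{efficient-exhaustive-edge} and~\ref{properties-of-ideal} already handle) but the careful bookkeeping of working simultaneously in the subgraph $\Lambda\backslash\Lambda H_I$ and modulo the ideal $I$, ensuring that the "domino'' propagation of the Cuntz--Krieger relations survives both quotients at once.
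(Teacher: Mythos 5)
Your proposal is correct and follows essentially the same route as the paper: identical arguments for heredity and $\mathcal{E}$-saturation, the same use of the quotient map together with Propositions~\ref{efficient-exhaustive-edge} and~\ref{properties-of-ideal} to verify (E2)/(E3) via the edge satiation $\widehat{\mathcal{B}_I}$, and the same absorption computation for $\mathcal{E}_{H_I}\subseteq\widehat{\mathcal{B}_I}$. Your explicit flagging of the subgraph-versus-ambient-graph bookkeeping (that the quotient family restricted to $\Lambda\backslash\Lambda H_I$ is a Toeplitz--Cuntz--Krieger family because generators with source in $H_I$ die in the quotient) is a point the paper leaves implicit in this lemma and only addresses in the proof of Theorem~\ref{gauge-invariant-ideals-efficient}, so it is a welcome clarification rather than a deviation.
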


\begin{proof}
To show $H_{I}$ is hereditary, take $v\in H_{I}$ and $w\in \Lambda ^{0}$
with $w\geq v$. Take $\lambda \in v\Lambda w$. So%
\begin{equation*}
v\in H_{I}\Rightarrow s_{v}^{\mathcal{E}}\in I\Rightarrow s_{\lambda }^{%
\mathcal{E}}=s_{v}^{\mathcal{E}}s_{\lambda }^{\mathcal{E}}\in I\Rightarrow
s_{w}^{\mathcal{E}}=s_{\lambda }^{\mathcal{E\ast }}s_{\lambda }^{\mathcal{E}%
}\in I\Rightarrow w\in H_{I}
\end{equation*}%
and $H_{I}$ is hereditary.

To show that $H_{I}$ is $\mathcal{E}$-saturated, take $v\in \Lambda ^{0}$
and $E\in v{\mathcal{E}}$ with $s(E)\subseteq H_{I}$. For $e\in E$,
\begin{equation*}
s(e)\in H_{I}\Rightarrow s_{s(e)}^{\mathcal{E}}\in I\Rightarrow s_{e}^{%
\mathcal{E}}=s_{e}^{\mathcal{E}}s_{s(e)}^{\mathcal{E}}\in I\Rightarrow
s_{e}^{\mathcal{E}}s_{e}^{\mathcal{E\ast }}\in I\text{.}
\end{equation*}%
Since $E\in {\mathcal{E}}$ and $\{s_{\lambda }^{{\mathcal{E}}}s_{\lambda }^{{%
\mathcal{E\ast }}}:\lambda \in \Lambda \}$ is a commuting family (see \cite[%
Lemma 2.7(i)]{RSY04}),
\begin{equation*}
0=\prod_{e\in E}(s_{v}^{\mathcal{E}}-s_{e}^{\mathcal{E}}s_{e}^{\mathcal{%
E\ast }})=s_{v}^{\mathcal{E}}+\sum_{F\subseteq E,F\neq \emptyset
}(-1)^{\left\vert F\right\vert }\prod_{e\in F}(s_{e}^{\mathcal{E}}s_{e}^{%
\mathcal{E\ast }})
\end{equation*}%
and since $s_{e}^{\mathcal{E}}s_{e}^{\mathcal{E\ast }}\in I$ for each $e\in
E $, $s_{v}^{\mathcal{E}}\in I$. Therefore $v\in H_{I}$ and $H_{I}$ is $%
\mathcal{E}$-saturated.

To show that $\mathcal{B}_{I}$ is an efficient set with $\mathcal{E}%
_{H_{I}}\subseteq \widehat{\mathcal{B}_{I}}$, consider the quotient map $q:%
{\normalsize C}^{\ast }(\Lambda ;{\mathcal{E}})\rightarrow {\normalsize C}%
^{\ast }(\Lambda ;{\mathcal{E}})/I$. We show that $\mathcal{B}_{I}$ is
efficient. Write $\Gamma :=\Lambda \backslash \Lambda H_{I}$. (E1) follows
from definition of $\mathcal{B}_{I}$. For (E2$^{\prime }$), take $E\in
\mathcal{B}_{I}$ and $f\in r(E)\Gamma ^{1}\backslash E$. So $q(\prod_{e\in
E}(s_{r(E)}^{\mathcal{E}}-s_{e}^{\mathcal{E}}s_{e}^{\mathcal{E\ast }}))=0$.
By Proposition \ref{efficient-exhaustive-edge}, 
\[q(\prod_{g\in \operatorname{Ext}%
_{\Gamma }(f;E)}(s_{s(f)}^{\mathcal{E}}-s_{g}^{\mathcal{E}}s_{g}^{\mathcal{%
E\ast }}))=0.\] So $\operatorname{Ext}_{\Gamma }(f;E)\in \widehat{\mathcal{B}_{I}}$
and (E2$^{\prime }$) holds. 

For (E3$^{\prime }$), take $E\in
\mathcal{B}_{I}$, $e\in E$, and $F\in s(e)\mathcal{B}_{I}$. Define%
\begin{equation*}
E_{F}:=(E\backslash \{ e\} )\cup \{ (ef)(0,d(f)):f\in F\} \text{.}
\end{equation*}%
Note that 
\[q(\prod_{f\in E}(s_{r(E)}^{\mathcal{E}}-s_{f}^{\mathcal{E}}s_{f}^{%
\mathcal{E\ast }}))=0=q(\prod_{f\in F}(s_{s(E)}^{\mathcal{E}}-s_{f}^{%
\mathcal{E}}s_{f}^{\mathcal{E\ast }})).\] 
By Proposition \ref%
{properties-of-ideal}, 
\[q(\prod_{f\in E_{F}}(s_{r(E)}^{\mathcal{E}}-s_{f}^{%
\mathcal{E}}s_{f}^{\mathcal{E\ast }}))=0.\] So $E_{F}\in \widehat{\mathcal{B}%
_{I}}$. Thus (E3$^{\prime }$) holds and $\mathcal{B}_{I}$ is efficient.

We show that $\widehat{\mathcal{B}_{I}}$ contains $\mathcal{E}_{H_{I}}$.
Take $E\in \mathcal{E}_{H_{I}}$, we show $q(\prod_{e\in E}(s_{r(E)}^{%
\mathcal{E}}-s_{e}^{\mathcal{E}}s_{e}^{\mathcal{E\ast }}))=0$. Since $E\in
\mathcal{E}_{H_{I}}$, there exists $E^{\prime }\in \mathcal{E}$ such that $%
E=E^{\prime }\backslash E^{\prime }H_{I}$. Then $E^{\prime }=E\cup E^{\prime
}H_{I}$. For $e\in E^{\prime }H_{I}$, we have $s_{s(e)}^{\mathcal{E}}\in I$
and $s_{e}^{\mathcal{E}}s_{e}^{\mathcal{E}\ast }\in I$. So $q(\prod_{e\in
E^{\prime }H_{I}}(s_{r(E)}^{\mathcal{E}}-s_{e}^{\mathcal{E}}s_{e}^{\mathcal{%
E\ast }}))=q(s_{r(E)}^{\mathcal{E}})$ and%
\begin{align*}
q\Big(\prod_{e\in E}(s_{r(E)}^{\mathcal{E}}-s_{e}^{\mathcal{E}}s_{e}^{%
\mathcal{E\ast }})\Big)& =q\Big(\prod_{e\in E}(s_{r(E)}^{\mathcal{E}}-s_{e}^{%
\mathcal{E}}s_{e}^{\mathcal{E\ast }})\Big)q(s_{r(E)}^{\mathcal{E}}) \\
& =q\Big(\prod_{e\in E}(s_{r(E)}^{\mathcal{E}}-s_{e}^{\mathcal{E}}s_{e}^{%
\mathcal{E\ast }})\prod_{e\in E^{\prime }H_{I}}(s_{r(E)}^{\mathcal{E}%
}-s_{e}^{\mathcal{E}}s_{e}^{\mathcal{E\ast }})\Big) \\
& =q\Big(\prod_{e\in E^{\prime }}(s_{r(E)}^{\mathcal{E}}-s_{e}^{\mathcal{E}%
}s_{e}^{\mathcal{E\ast }})\Big)=q(0)=0
\end{align*}%
since $E^{\prime }\in \mathcal{E}$. So $E\in \widehat{\mathcal{B}_{I}}$ and $%
\mathcal{E}_{H_{I}}\subseteq \widehat{\mathcal{B}_{I}}$.
\end{proof}

\begin{proof}[Proof of Theorem \protect\ref{gauge-invariant-ideals-efficient}%
]
We use a similar argument to \cite[Theorem 4.9]{CBMS}. Write $\Gamma
:=\Lambda \backslash \Lambda H$. To show that $\Phi $ is surjective, take a
gauge-invariant ideal $I$ in ${\normalsize C}^{\ast }(\Lambda ;{\mathcal{E}}%
) $. By Lemma \ref{gauge-invariant-ideal-lemma}, $H_{I}$ is $\mathcal{E}$%
-saturated hereditary and $\mathcal{B}_{I}$ is efficient such that $\mathcal{%
E}_{H_{I}}\subseteq \widehat{\mathcal{B}_{I}}$. We show $I=I_{H_{I},\mathcal{%
B}_{I}}$. Since all the generators of $I_{H_{I},\mathcal{B}_{I}}$ are in $I$%
, we have $I_{H_{I},\mathcal{B}_{I}}\subseteq I$. To show the reverse
inclusion, consider the quotient maps%
\begin{equation*}
q_{I}:{\normalsize C}^{\ast }(\Lambda ;{\mathcal{E}})\rightarrow
{\normalsize C}^{\ast }(\Lambda ;{\mathcal{E}})/I\text{, }q_{I_{H_{I},%
\mathcal{B}_{I}}}:{\normalsize C}^{\ast }(\Lambda ;{\mathcal{E}})\rightarrow
{\normalsize C}^{\ast }(\Lambda ;{\mathcal{E}})/I_{H_{I},\mathcal{B}_{I}}%
\text{, and}
\end{equation*}%
\begin{equation*}
q_{I/I_{H_{I},\mathcal{B}_{I}}}:{\normalsize C}^{\ast }(\Lambda ;{\mathcal{E}%
})/I_{H_{I},\mathcal{B}_{I}}\rightarrow {\normalsize C}^{\ast }(\Lambda ;{%
\mathcal{E}})/I=({\normalsize C}^{\ast }(\Lambda ;{\mathcal{E}})/I_{H_{I},%
\mathcal{B}_{I}})/(I/I_{H_{I},\mathcal{B}_{I}}).
\end{equation*}%
So $q_{I}=q_{I/I_{H_{I},\mathcal{B}_{I}}}\circ q_{I_{H_{I},\mathcal{B}_{I}}}$%
. We claim both $\{q_{I}(s_{\lambda }^{{\mathcal{E}}}):\lambda \in \Gamma \}$
and $\{q_{I_{H_{I},\mathcal{B}_{I}}}(s_{\lambda }^{{\mathcal{E}}}):\lambda
\in \Gamma \}$ are relative Cuntz-Krieger $(\Gamma ;\mathcal{B}_{I})$%
-families. Since $q_{I},q_{I_{H_{I},\mathcal{B}_{I}}}$ are quotient maps and
$q_{I}(s_{\lambda }^{{\mathcal{E}}})=0=q_{I_{H_{I},\mathcal{B}%
_{I}}}(s_{\lambda }^{{\mathcal{E}}})$ for $\lambda \in \Lambda H$, both $%
\{q_{I}(s_{\lambda }^{{\mathcal{E}}}):\lambda \in \Gamma \}$ and $%
\{q_{I_{H_{I},\mathcal{B}_{I}}}(s_{\lambda }^{{\mathcal{E}}}):\lambda \in
\Gamma \}$ are Toeplitz-Cuntz-Krieger $\Gamma $-families. For $E\in \mathcal{%
B}_{I}$, based on definition of $\mathcal{B}_{I}$ and $I_{H_{I},\mathcal{B}%
_{I}}$, we have
\begin{equation*}
q_{I}\Big(\prod_{e\in E}(s_{r(E)}^{\mathcal{E}}-s_{e}^{\mathcal{E}}s_{e}^{%
\mathcal{E\ast }})\Big)=0=q_{I_{H_{I},\mathcal{B}_{I}}}\Big(\prod_{e\in
E}(s_{r(E)}^{\mathcal{E}}-s_{e}^{\mathcal{E}}s_{e}^{\mathcal{E\ast }})\Big)%
\text{.}
\end{equation*}%
So both families satisfy (CK) and are relative Cuntz-Krieger $(\Gamma ;%
\mathcal{B}_{I})$-families, as claimed.

Let $\pi _{H_{I},\mathcal{B}_{I}}:{\normalsize C}^{\ast }(\Gamma ;\mathcal{B}%
_{I})\rightarrow {\normalsize C}^{\ast }(\Lambda ;\mathcal{E})/I_{H_{I},%
\mathcal{B}_{I}}$ and $\pi _{I}:{\normalsize C}^{\ast }(\Gamma ;\mathcal{B}%
_{I})\rightarrow {\normalsize C}^{\ast }(\Lambda ;\mathcal{E})/I$ be the
homomorphisms obtained from the universal property of ${\normalsize C}^{\ast
}(\Gamma ;\mathcal{B}_{I})$. So $\pi _{I}$ and $q_{I_{H_{I},\mathcal{B}%
_{I}}}\circ \pi _{H_{I},\mathcal{B}_{I}}$ are homomorphisms which agree on $%
\{ s_{\lambda }^{{\mathcal{E}}}s_{\mu }^{{\mathcal{E\ast }}}:\lambda ,\mu
\in \Gamma \} $, which is the generators of ${\normalsize C}^{\ast }(\Gamma ;%
\mathcal{B}_{I})$ (see \cite[Lemma 2.7(iv)]{RSY04}), and hence are equal. We
use Theorem \ref{the-gauge-invariant-uniqueness-theorem} to show that $\pi
_{I}$ is injective. By definition of $H_{I}$, $q_{I}(s_{v}^{\mathcal{E}%
})\neq 0$ for all $v\in \Gamma ^{0}$, so $\{q_{I}(s_{\lambda }^{{\mathcal{E}}%
}):\lambda \in \Gamma \}$ satisfies (G1). If $E\in \operatorname{FE}(\Gamma ^{1})$
such that $E\notin \widehat{\mathcal{B}_{I}} $, then 
\[q_{I}(\prod_{e\in E}(s_{r(E)}^{\mathcal{E}}-s_{e}^{\mathcal{E}}s_{e^{\ast }}^{\mathcal{E}
}))\neq 0\] 
by definition of $\mathcal{B}_{I}$. So $\{q_{I}(s_{\lambda }^{{%
\mathcal{E}}}):\lambda \in \Gamma \}$ satisfies (G2). Since $I$ is
gauge-invariant, then the gauge action $\beta $ on ${\normalsize C}^{\ast
}(\Lambda ;\mathcal{E})$ descends to an action $\theta $ on ${\normalsize C}%
^{\ast }(\Lambda ;\mathcal{E})/I$ which satisfies (G3). Therefore, by
Theorem \ref{the-gauge-invariant-uniqueness-theorem}, $\pi _{I} $ is
injective. Since $\pi _{H_{I},\mathcal{B}_{I}}$ is surjective and $\pi
_{I}=q_{I/I_{H_{I},\mathcal{B}_{I}}}\circ \pi _{H_{I},\mathcal{B}_{I}}$, the
injectivity of $\pi _{I}$ implies that $q_{I/I_{H_{I},\mathcal{B}_{I}}}$ is
also injective. Thus $I_{H_{I},\mathcal{B}_{I}}=I$, as required.

Next we show the injectivity of $\Phi $. Take a $\mathcal{E}$-saturated
hereditary set $H$ and an efficient set $\mathcal{B}\subseteq \operatorname{FE}%
(\Gamma ^{1})$ such that $\mathcal{E}_{H}\subseteq \widehat{\mathcal{B}}$.
Define
\begin{equation*}
H_{I_{H,\mathcal{B}}}:=\{v\in \Lambda ^{0}:s_{v}^{\mathcal{E}}\in I_{H,%
\mathcal{B}}\}\text{,}
\end{equation*}%
\begin{equation*}
\mathcal{B}_{I_{H,\mathcal{B}}}:=\min \Big(\Big\{E\in \operatorname{FE}(\Gamma
^{1}):\prod_{e\in E}(s_{r(E)}^{\mathcal{E}}-s_{e}^{\mathcal{E}}s_{e}^{%
\mathcal{E\ast }})\in I_{H,\mathcal{B}}\Big\}\Big)\text{.}
\end{equation*}%
To show that $\Phi $ is injective, we show $H=H_{I_{H,\mathcal{B}}}$ and $%
\mathcal{B}=\mathcal{B}_{I_{H,\mathcal{B}}}$. We trivially have $H\subseteq
H_{1}$ and $\widehat{\mathcal{B}}\subseteq \widehat{\mathcal{B}_{I_{H,%
\mathcal{B}}}}$. To prove the reverse inclusion, consider the universal
relative Cuntz-Krieger $(\Gamma ;\mathcal{B})$-family $\{s_{\lambda }^{%
\mathcal{B}}:\lambda \in \Gamma \}$. For $\lambda \in \Lambda $, we define%
\begin{equation*}
S_{\lambda }:=%
\begin{cases}
s_{\lambda }^{\mathcal{B}} & \text{if }s(\lambda )\notin H\text{,} \\
0 & \text{if }s(\lambda )\in H\text{.}%
\end{cases}%
\end{equation*}%
We show that $\{S_{\lambda }:\lambda \in \Lambda \}$ is a relative
Cuntz-Krieger $(\Lambda ;\mathcal{E})$-family. It is clear that the family
satisfies (TCK1-3). To show (CK), take $E\in \mathcal{E}$. Then%
\begin{align*}
\prod_{e\in E}(S_{r(E)}-S_{e}S_{e}^{\ast })& =\prod_{e\in
EH}(S_{r(E)}-S_{e}S_{e}^{\ast })\prod_{e\in E\backslash
EH}(S_{r(E)}-S_{e}S_{e}^{\ast }) \\
& =S_{r(E)}\prod_{e\in E\backslash EH}(S_{r(E)}-S_{e}S_{e}^{\ast
})=s_{r(E)}^{\mathcal{B}}\prod_{e\in E\backslash EH}(s_{r(E)}^{\mathcal{B}%
}-s_{e}^{\mathcal{B}}s_{e}^{\mathcal{B\ast }})=0
\end{align*}%
since $E\backslash EH\in \mathcal{E}_{H}\subseteq \widehat{\mathcal{B}}$ and
Proposition \ref{cRK-family-without-efficient}. So $\{S_{\lambda }:\lambda
\in \Lambda \}$ is a relative Cuntz-Krieger $(\Lambda ;\mathcal{E})$-family
and the universal property of $C^{\ast }(\Lambda ;\mathcal{E})$ gives a
homomorphism $\pi _{S}$ of $C^{\ast }(\Lambda ;\mathcal{E})$ into $C^{\ast
}(\Gamma ;\mathcal{B})$ with $\pi _{S}(s_{\lambda }^{\mathcal{E}%
})=S_{\lambda }$ for $\lambda \in \Lambda $. Since $\pi _{S}(s_{v}^{\mathcal{%
E}})=0$ for $v\in H$ and $\pi _{S}(\prod_{e\in E}(s_{r(E)}^{\mathcal{E}%
}-s_{e}^{\mathcal{E}}s_{e}^{\mathcal{E\ast }}))=0$ for $E\in \mathcal{B}$,
we have $I_{H,\mathcal{B}}\subseteq \ker \pi _{S}$. By Proposition \ref%
{properties-of-relative-CK-algebra},
\begin{equation*}
v\notin H\Rightarrow s_{v}^{\mathcal{B}}\neq 0\Rightarrow S_{v}\neq
0\Rightarrow \pi _{S}(s_{v}^{\mathcal{E}})\neq 0\Rightarrow s_{v}^{\mathcal{E%
}}\notin \ker \pi _{S}\Rightarrow s_{v}^{\mathcal{E}}\notin I_{H,\mathcal{B}}%
\text{,}
\end{equation*}%
\begin{align*}
E\in \operatorname{FE}(\Gamma ^{1})\backslash \widehat{\mathcal{B}}& \Rightarrow
\prod_{e\in E}(s_{r(E)}^{\mathcal{B}}-s_{e}^{\mathcal{B}}s_{e}^{\mathcal{%
B\ast }})\neq 0\Rightarrow \prod_{e\in E}(S_{r(E)}-S_{e}S_{e}^{\ast })\neq 0
\\
& \Rightarrow \pi _{S}\Big(\prod_{e\in E}(s_{r(E)}^{\mathcal{E}}-s_{e}^{%
\mathcal{E}}s_{e}^{\mathcal{E\ast }})\Big)\neq 0\Rightarrow \prod_{e\in
E}(s_{r(E)}^{\mathcal{E}}-s_{e}^{\mathcal{E}}s_{e}^{\mathcal{E\ast }})\notin
\ker \pi _{S} \\
& \Rightarrow \prod_{e\in E}(s_{r(E)}^{\mathcal{E}}-s_{e}^{\mathcal{E}%
}s_{e}^{\mathcal{E\ast }})\notin I_{H,\mathcal{B}}\text{.}
\end{align*}%
Then $H_{I_{H,\mathcal{B}}}\subseteq H$ and $\widehat{\mathcal{B}_{I_{H,%
\mathcal{B}}}}\subseteq \widehat{\mathcal{B}}$. Hence $H=H_{I_{H,\mathcal{B}%
}}$ and $\widehat{\mathcal{B}}=\widehat{\mathcal{B}_{I_{H,\mathcal{B}}}}$.
By Lemma \ref{efficient-min-e-hat}, $\mathcal{B}=\mathcal{B}_{I_{H,%
\mathcal{B}}}$. Therefore $\Phi $ is injective and hence an isomorphism.
\end{proof}

\begin{remark}
\label{gauge-invariant-ideals-efficient-additional}From the proof of
Theorem \ref{gauge-invariant-ideals-efficient}, we get:

\begin{enumerate}
\item[(a)] For a gauge-invariant ideal $I$, we have $I_{H_{I},\mathcal{B}%
_{I}}=I$.

\item[(b)] For a hereditary set $H$ and an efficient set $\mathcal{B}%
\subseteq \operatorname{FE}((\Lambda \backslash \Lambda H)^{1})$ such that $\mathcal{%
E}_{H}\subseteq \widehat{\mathcal{B}}$, we have $H_{I_{H,\mathcal{B}}}=H$
and $\mathcal{B}_{I_{H,\mathcal{B}}}=\mathcal{B}$.
\end{enumerate}

Part (a)\ is from the surjectivity of \ $\Phi $. The injectivity of $\Phi $
implies part (b).
\end{remark}

\begin{remark}
\label{gauge-invariant-ideals-efficient-remark}In \cite{SWW14}, Sims,
Whitehead and Whittaker gives a complete listing of the gauge-invariant
ideals in a twisted $C^{\ast }$-algebra associated to a higher-rank graph
\cite[Theorem 4.6]{SWW14}. Since
their twisted $C^{\ast }$-algebras can be viewed as a generalisation of relative
Cuntz-Krieger algebras, they actually have an alternative version of Theorem %
\ref{gauge-invariant-ideals-efficient}, which uses satiated sets rather than
efficient sets. Indeed, we could have shown Theorem \ref%
{gauge-invariant-ideals-efficient} as a consequence of Theorem 4.6 of \cite%
{SWW14}. However, the direct argument above takes about the same amount of
effort.
\end{remark}

\subsection{Toeplitz algebras and their quotient algebras}

\label{Subsection-Toeplitz-algebras}Throughout this subsection, suppose that $%
\Lambda $ is a row-finite $k$-graph with no sources.
In this subsection, we study the relationship between Toeplitz algebras and
their ideals and quotient algebras. 

Historically, one was forced to consider satiations whenever working with 
ideals and quotients of higher-rank graph Toeplitz algebras.  
For example in \cite[Appendix A]{aHKR15}, an Huef, Kang
and Raeburn must prove results about satiations even though 
they are really only interested in $\mathcal{%
E}:=\{ \bigcup_{i\in K}v\Lambda ^{e_{i}}:v\in \Lambda ^{0}\} $ where $K$ is
a nonempty subset of $\{ 1,\ldots ,k\} $. 
Remark \ref{example-efficient} tells us that $\{ \bigcup_{i\in
K}v\Lambda ^{e_{i}}:v\in \Lambda ^{0}\} $ is efficient and so we have established 
tools that allow us to avoid these unruly satiations.
%%%%%%%%%%%%%%%%%

Our next theorem is a special
case of Theorem \ref{gauge-invariant-ideals-efficient}, which lists all the
gauge-invariant ideals in a higher-rank graph Toeplitz algebra. Here we adjust some
notation as explained in Remark \ref{gauge-invariant-ideals-Toeplitz-algebra-remark}. 
\begin{theorem}
\label{gauge-invariant-ideals-Toeplitz-algebra}Suppose that $\Lambda $ is a
row-finite $k$-graph with no sources and that $\{t_{\lambda }:\lambda \in
\Lambda \}$ is the universal Toeplitz-Cuntz-Krieger $\Lambda $-family.

\begin{enumerate}
\item[(a)] Suppose that $H\subseteq \Lambda ^{0}$ and that $\mathcal{E}%
\subseteq \operatorname{FE}((\Lambda \backslash \Lambda H)^{1})$. The ideal $I_{H,%
\mathcal{E}}$, as defined in Theorem \ref{gauge-invariant-ideals-efficient},
is a gauge-invariant ideal in $TC^{\ast }(\Lambda )$.

\item[(b)] Suppose that $I$ is a gauge-invariant ideal of $TC^{\ast
}(\Lambda )$. Suppose that $H_{I}$ and $\mathcal{E}_{I}$ are as defined in
Theorem \ref{gauge-invariant-ideals-efficient}. Then $H_{I}$ is a hereditary
subset of $\Lambda $, and $\mathcal{E}_{I}$ is an efficient subset of $\operatorname{%
FE}((\Lambda \backslash \Lambda H_{I})^{1})$.

\item[(c)] Suppose that $I$ is a gauge-invariant ideal of $TC^{\ast
}(\Lambda )$. If $E\in \operatorname{FE}((\Lambda \backslash \Lambda H_{I})^{1})\,$\
such that $\prod_{e\in E}(t_{r(E)}-t_{e}t_{e}^{\ast })\in I$, then there
exists $F\in \mathcal{E}_{I}$ such that $F\subseteq E$.

\item[(d)] For a gauge-invariant ideal $I$, we have $I_{H_{I},\mathcal{E}%
_{I}}=I$.

\item[(e)] For a hereditary set $H$ of $\Lambda $ and an efficient set $%
\mathcal{E}\subseteq \operatorname{FE}((\Lambda \backslash \Lambda H)^{1})$, we have
$H_{I_{H,\mathcal{E}}}=H$ and $\mathcal{E}_{I_{H,\mathcal{E}}}=\mathcal{E}$.
\end{enumerate}
\end{theorem}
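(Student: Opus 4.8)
The plan is to derive Theorem~\ref{gauge-invariant-ideals-Toeplitz-algebra} as the special case of Theorem~\ref{gauge-invariant-ideals-efficient} in which the fixed efficient collection is empty. The starting observation is that $TC^{\ast}(\Lambda)=C^{\ast}(\Lambda;\emptyset)$: the quotient of $TC^{\ast}(\Lambda)$ by the ideal generated by $\{\prod_{\lambda\in E}(t_{r(E)}-t_{\lambda}t_{\lambda}^{\ast}):E\in\emptyset\}=\emptyset$ is $TC^{\ast}(\Lambda)$ itself, and the universal Toeplitz-Cuntz-Krieger family is precisely the universal relative Cuntz-Krieger $(\Lambda;\emptyset)$-family. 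Since $\Lambda$ is row-finite with no sources it is finitely aligned, and $\emptyset\subseteq\operatorname{FE}(\Lambda^{1})$ vacuously satisfies (E1), (E2) and (E3), so $\emptyset$ is efficient and Theorem~\ref{gauge-invariant-ideals-efficient} applies with $\mathcal{E}=\emptyset$.

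Next I would simplify the two hypotheses that Theorem~\ref{gauge-invariant-ideals-efficient} attaches to a pair $(H,\mathcal{B})$. First, $\emptyset$-saturation is automatic: its defining condition quantifies over $E\in v\emptyset=\emptyset$ and so holds vacuously, whence the $\emptyset$-saturated hereditary sets are exactly the hereditary sets. Second, $\mathcal{E}_{H}=\{E\backslash EH:E\in\emptyset\}=\emptyset$, so the compatibility requirement $\mathcal{E}_{H}\subseteq\widehat{\mathcal{B}}$ reads $\emptyset\subseteq\widehat{\mathcal{B}}$ and is always satisfied. Thus the admissible pairs reduce to $(H,\mathcal{E})$ with $H$ hereditary and $\mathcal{E}\subseteq\operatorname{FE}((\Lambda\backslash\Lambda H)^{1})$ efficient, with no further constraint. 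Renaming the varying efficient set $\mathcal{B}$ of Theorem~\ref{gauge-invariant-ideals-efficient} to $\mathcal{E}$, as flagged in Remark~\ref{gauge-invariant-ideals-Toeplitz-algebra-remark}, the bijection $\Phi$ specialises to (a), (b), (d) and (e): statements (a) and (b) record that $\Phi$ and its inverse land where claimed, while (d) and (e) are exactly the two identities of Remark~\ref{gauge-invariant-ideals-efficient-additional} transported through this specialisation.

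The one part needing a short separate argument is (c). Here I would unwind the definitions $\mathcal{E}_{I}=\mathcal{B}_{I}=\min(\mathcal{S})$, where $\mathcal{S}:=\{E\in\operatorname{FE}((\Lambda\backslash\Lambda H_{I})^{1}):\prod_{e\in E}(t_{r(E)}-t_{e}t_{e}^{\ast})\in I\}$. The set $\mathcal{S}$ is upward closed under inclusion, that is $\mathcal{S}=\widehat{\mathcal{S}}$: if $F\subseteq E$ with $F\in\mathcal{S}$ then $\prod_{e\in E}(t_{r(E)}-t_{e}t_{e}^{\ast})=\prod_{e\in F}(t_{r(E)}-t_{e}t_{e}^{\ast})\prod_{e\in E\backslash F}(t_{r(E)}-t_{e}t_{e}^{\ast})\in I$, whence $E\in\mathcal{S}$. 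Since the members of $\mathcal{S}$ are finite sets, each contains a minimal element, so by the reasoning of Lemma~\ref{efficient-min-e-hat} we get $\widehat{\mathcal{E}_{I}}=\widehat{\min(\mathcal{S})}=\widehat{\mathcal{S}}=\mathcal{S}$. Consequently $\prod_{e\in E}(t_{r(E)}-t_{e}t_{e}^{\ast})\in I$ holds if and only if $E\in\widehat{\mathcal{E}_{I}}$, i.e. if and only if some $F\in\mathcal{E}_{I}$ satisfies $F\subseteq E$; this is precisely (c).

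I do not expect a genuine obstacle: the content is entirely a specialisation, and the only things to watch are the bookkeeping verifications that both $\emptyset$-saturation and the condition $\mathcal{E}_{H}\subseteq\widehat{\mathcal{B}}$ degenerate, together with the notational swap $\mathcal{B}\leftrightarrow\mathcal{E}$. If any friction arises it will be in (c), in making the passage from the $\min$ and edge-satiation language of Theorem~\ref{gauge-invariant-ideals-efficient} to the containment formulation stated here airtight; the upward-closure of $\mathcal{S}$ is the key point and should be spelled out carefully.
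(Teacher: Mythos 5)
Your proposal is correct and follows essentially the same route as the paper: the paper also obtains this theorem by specialising Theorem~\ref{gauge-invariant-ideals-efficient} to the efficient set $\mathcal{E}=\emptyset$ (using that Toeplitz--Cuntz--Krieger $\Lambda$-families coincide with relative Cuntz--Krieger $(\Lambda;\emptyset)$-families, so $TC^{\ast}(\Lambda)=C^{\ast}(\Lambda;\emptyset)$), with parts (d) and (e) read off from Remark~\ref{gauge-invariant-ideals-efficient-additional}. Your explicit finiteness/upward-closure argument for part (c) is a correct and more detailed version of the paper's one-line justification (``(c) holds since $\mathcal{E}_{I}$ is efficient''), since what is really needed there is exactly that every member of $\{E\in \operatorname{FE}((\Lambda\backslash\Lambda H_{I})^{1}):\prod_{e\in E}(t_{r(E)}-t_{e}t_{e}^{\ast})\in I\}$, being a finite set, contains a minimal member of that collection.
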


\begin{remark}
\label{gauge-invariant-ideals-Toeplitz-algebra-remark}A number of aspects
are worth commenting on:

\begin{enumerate}
\item[(i)] Since Toeplitz-Cuntz-Krieger $\Lambda $-families coincide with
relative Cuntz-Krieger $( \Lambda ;{\mathcal{\emptyset }}) $-families, then
to simplify the notation, we replace  $\mathcal{B}$ of
Theorem \ref{gauge-invariant-ideals-efficient} with $\mathcal{E}$.

\item[(ii)] Part (c) holds since $\mathcal{E}_{I}$ is efficient. Part (d)
and (e)\ follows from Remark \ref%
{gauge-invariant-ideals-efficient-additional}.
\end{enumerate}
\end{remark}

Here for $B\subseteq TC^{\ast }(\Lambda
) $, we write $\left\langle B\right\rangle $ to denote the ideal of $%
TC^{\ast }(\Lambda )$ generated by the elements of $B$. Note that $%
\left\langle B\right\rangle $ is the smallest ideal which contains $B$.

%%%%%%%%%%%%%%%%%
\begin{proposition}
\label{diagram-Toeplitz-algebra}Suppose that $\Lambda $ is a row-finite $k$%
-graph with no sources and that $\{t_{\lambda }:\lambda \in \Lambda \}$ is
the universal Toeplitz-Cuntz-Krieger $\Lambda $-family. Suppose that $K,L\ $%
are nonempty subsets of $\{ 1,\ldots ,k\} $. Define%
\begin{equation*}
A:=\Big\{\prod_{i\in K}\Big(t_{v}-\sum_{e\in v\Lambda
^{e_{i}}}t_{e}t_{e}^{\ast }\Big):v\in \Lambda ^{0}\Big\}\text{ and }B:=\Big\{%
\prod_{j\in L}\Big(t_{v}-\sum_{e\in v\Lambda ^{e_{j}}}t_{e}t_{e}^{\ast }\Big)%
:v\in \Lambda ^{0}\Big\}\text{.}
\end{equation*}%
Then the diagram
\begin{equation*}
\begin{array}{ccccccccc}
&  & 0 &  & 0 &  & 0 &  &  \\
&  & \downarrow &  & \downarrow &  & \downarrow &  &  \\
0 & \rightarrow & \left\langle AB\right\rangle & \rightarrow & \left\langle
A\right\rangle & \rightarrow & \frac{\left\langle A\cup B\right\rangle }{%
\left\langle B\right\rangle } & \rightarrow & 0 \\
&  & \downarrow &  & \downarrow &  & \downarrow &  &  \\
0 & \rightarrow & \left\langle B\right\rangle & \rightarrow & TC^{\ast
}(\Lambda ) & \rightarrow & \frac{TC^{\ast }(\Lambda )}{\left\langle
B\right\rangle } & \rightarrow & 0 \\
&  & \downarrow &  & \downarrow &  & \downarrow &  &  \\
0 & \rightarrow & \frac{\left\langle A\cup B\right\rangle }{\left\langle
A\right\rangle } & \rightarrow & \frac{TC^{\ast }(\Lambda )}{\left\langle
A\right\rangle } & \rightarrow & \frac{TC^{\ast }(\Lambda )}{\left\langle
A\cup B\right\rangle } & \rightarrow & 0 \\
&  & \downarrow &  & \downarrow &  & \downarrow &  &  \\
&  & 0 &  & 0 &  & 0 &  &
\end{array}%
\end{equation*}%
is commutative and all the rows and columns are exact.
\end{proposition}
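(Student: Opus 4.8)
The plan is to reduce the whole nine-term diagram to the two ideal identities
\begin{equation*}
\langle A\cup B\rangle=\langle A\rangle+\langle B\rangle\quad\text{and}\quad\langle AB\rangle=\langle A\rangle\cap\langle B\rangle,
\end{equation*}
and then to read off the diagram from standard isomorphism theorems. The second and third rows are the canonical short exact sequences of the ideal $\langle B\rangle$ in $TC^{\ast}(\Lambda)$ and of the ideal $\langle A\cup B\rangle/\langle A\rangle$ in $TC^{\ast}(\Lambda)/\langle A\rangle$; the second and third columns are the analogous sequences for $\langle A\rangle$ and for $\langle A\cup B\rangle/\langle B\rangle$. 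These four are exact for free, and every arrow in the diagram is either an inclusion of ideals or a canonical quotient map, so commutativity of each of the nine squares is immediate. The only real content is exactness of the first row and first column. There the map $\langle A\rangle\to\langle A\cup B\rangle/\langle B\rangle$ is the restriction of the quotient by $\langle B\rangle$; its image is $(\langle A\rangle+\langle B\rangle)/\langle B\rangle$ and its kernel is $\langle A\rangle\cap\langle B\rangle$, so the first identity gives surjectivity and the second identifies the kernel with $\langle AB\rangle$ (and symmetrically for the first column).

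The first identity is routine: $A,B\subseteq A\cup B$ gives $\langle A\rangle+\langle B\rangle\subseteq\langle A\cup B\rangle$, while the algebraic sum of two closed ideals of a $C^{\ast}$-algebra is a closed ideal containing $A\cup B$, giving the reverse inclusion.

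For the second identity I would first rewrite the generators. Put $a_{v}:=\prod_{i\in K}(t_{v}-\sum_{e\in v\Lambda^{e_{i}}}t_{e}t_{e}^{\ast})$ and $b_{v}:=\prod_{j\in L}(t_{v}-\sum_{e\in v\Lambda^{e_{j}}}t_{e}t_{e}^{\ast})$. For a fixed colour the projections $t_{e}t_{e}^{\ast}$, $e\in v\Lambda^{e_{i}}$, are mutually orthogonal subprojections of $t_{v}$, so $t_{v}-\sum_{e\in v\Lambda^{e_{i}}}t_{e}t_{e}^{\ast}=\prod_{e\in v\Lambda^{e_{i}}}(t_{v}-t_{e}t_{e}^{\ast})$; since $\{t_{\lambda}t_{\lambda}^{\ast}\}$ commutes and a repeated projection factor is idempotent,
\begin{equation*}
a_{v}=\prod_{e\in\bigcup_{i\in K}v\Lambda^{e_{i}}}(t_{v}-t_{e}t_{e}^{\ast}),\qquad a_{v}b_{v}=\prod_{e\in\bigcup_{i\in K\cup L}v\Lambda^{e_{i}}}(t_{v}-t_{e}t_{e}^{\ast}).
\end{equation*}
As $a_{v}b_{w}=0$ for $v\neq w$, this exhibits $\langle A\rangle$, $\langle B\rangle$ and $\langle AB\rangle$ as the gauge-invariant ideals $I_{\emptyset,\mathcal{E}_{K}}$, $I_{\emptyset,\mathcal{E}_{L}}$ and $I_{\emptyset,\mathcal{E}_{K\cup L}}$ of Theorem~\ref{gauge-invariant-ideals-Toeplitz-algebra}, all three index sets being efficient by Remark~\ref{example-efficient}.

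Finally I would compute the invariants of the gauge-invariant ideal $\langle A\rangle\cap\langle B\rangle$. Since the image of $t_{v}$ in $TC^{\ast}(\Lambda)/\langle A\rangle=C^{\ast}(\Lambda;\mathcal{E}_{K})$ is nonzero (Proposition~\ref{properties-of-relative-CK-algebra}), we have $t_{v}\notin\langle A\rangle$, so $H_{\langle A\rangle}=H_{\langle B\rangle}=\emptyset$ and hence $H_{\langle A\rangle\cap\langle B\rangle}=\emptyset$. By Theorem~\ref{gauge-invariant-ideals-Toeplitz-algebra}, for $E\in\operatorname{FE}(\Lambda^{1})$ with $r(E)=v$ one has $\prod_{e\in E}(t_{v}-t_{e}t_{e}^{\ast})\in\langle A\rangle$ iff $E\in\widehat{\mathcal{E}_{K}}$; as the only member of $\mathcal{E}_{K}$ that can lie inside $E\subseteq v\Lambda^{1}$ is $\bigcup_{i\in K}v\Lambda^{e_{i}}$, this says $v\Lambda^{e_{i}}\subseteq E$ for every $i\in K$, and likewise for $L$. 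Intersecting, $\prod_{e\in E}(t_{v}-t_{e}t_{e}^{\ast})\in\langle A\rangle\cap\langle B\rangle$ iff $v\Lambda^{e_{i}}\subseteq E$ for every $i\in K\cup L$, i.e. iff $E\in\widehat{\mathcal{E}_{K\cup L}}$; this is the combinatorial identity $\widehat{\mathcal{E}_{K}}\cap\widehat{\mathcal{E}_{L}}=\widehat{\mathcal{E}_{K\cup L}}$. Therefore $\mathcal{B}_{\langle A\rangle\cap\langle B\rangle}=\min(\widehat{\mathcal{E}_{K\cup L}})=\mathcal{E}_{K\cup L}$ by Lemma~\ref{efficient-min-e-hat}, so $\langle A\rangle\cap\langle B\rangle=I_{\emptyset,\mathcal{E}_{K\cup L}}=\langle AB\rangle$ by the bijection of Theorem~\ref{gauge-invariant-ideals-efficient} (equivalently Theorem~\ref{gauge-invariant-ideals-Toeplitz-algebra}(d)). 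I expect the second identity to be the main obstacle: the inclusion $\langle AB\rangle\subseteq\langle A\rangle\cap\langle B\rangle$ is formal, but the reverse fails for arbitrary commuting projections and genuinely requires the gauge-invariant classification together with the identity $\widehat{\mathcal{E}_{K}}\cap\widehat{\mathcal{E}_{L}}=\widehat{\mathcal{E}_{K\cup L}}$.
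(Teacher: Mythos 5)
Your proposal is correct and follows essentially the same route as the paper: it reduces the nine-term diagram to the two identities $\langle A\cup B\rangle=\langle A\rangle+\langle B\rangle$ and $\langle A\rangle\cap\langle B\rangle=\langle AB\rangle$ (the paper's Lemma \ref{ideals-in-Toeplitz-algebra}) plus the isomorphism theorems, and establishes the intersection identity by computing the invariants $H=\emptyset$ and $\mathcal{E}_{K\cup L}$ of the gauge-invariant ideal $\langle A\rangle\cap\langle B\rangle$ and invoking the classification of Theorem \ref{gauge-invariant-ideals-Toeplitz-algebra}, exactly as the paper does. The only cosmetic difference is that you identify the efficient set via $\min(\widehat{\mathcal{E}_{K\cup L}})$ and the identity $\widehat{\mathcal{E}_{K}}\cap\widehat{\mathcal{E}_{L}}=\widehat{\mathcal{E}_{K\cup L}}$, while the paper argues minimality directly from part (c) and (E1); the substance is the same.
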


Before giving the proof, we establish a 
stepping stone result.

\begin{lemma}
\label{ideals-in-Toeplitz-algebra}Suppose that $K,L\ $are nonempty subsets
of $\{ 1,\ldots ,k\} $ and that $A,B$ are as in Theorem \ref%
{diagram-Toeplitz-algebra}. Then $\left\langle A\right\rangle +\left\langle
B\right\rangle =\left\langle A\cup B\right\rangle $ and $\left\langle
A\right\rangle \cap \left\langle B\right\rangle =\langle AB\rangle =\langle
BA\rangle $.
\end{lemma}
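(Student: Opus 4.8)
The plan is to dispatch the easy identities by hand and to derive the crucial equality $\langle A\rangle\cap\langle B\rangle=\langle AB\rangle$ from the classification of gauge-invariant ideals in Theorem \ref{gauge-invariant-ideals-Toeplitz-algebra}. Write $a_{v}:=\prod_{i\in K}(t_{v}-\sum_{e\in v\Lambda^{e_{i}}}t_{e}t_{e}^{\ast})$ and $b_{v}:=\prod_{j\in L}(t_{v}-\sum_{e\in v\Lambda^{e_{j}}}t_{e}t_{e}^{\ast})$, so $A=\{a_{v}\}$ and $B=\{b_{v}\}$. Since $\{t_{\lambda}t_{\lambda}^{\ast}\}$ is a commuting family of projections, each $t_{v}-\sum_{e\in v\Lambda^{e_{i}}}t_{e}t_{e}^{\ast}$ is a projection, these commute, and expanding using orthogonality of distinct same-degree range projections gives $t_{v}-\sum_{e\in v\Lambda^{e_{i}}}t_{e}t_{e}^{\ast}=\prod_{e\in v\Lambda^{e_{i}}}(t_{v}-t_{e}t_{e}^{\ast})$. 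Hence $a_{v}=\prod_{e\in E_{v}}(t_{v}-t_{e}t_{e}^{\ast})$ with $E_{v}=\bigcup_{i\in K}v\Lambda^{e_{i}}\in\mathcal{E}_{K}$, so $\langle A\rangle=I_{\emptyset,\mathcal{E}_{K}}$; likewise $\langle B\rangle=I_{\emptyset,\mathcal{E}_{L}}$. The identity $\langle A\rangle+\langle B\rangle=\langle A\cup B\rangle$ is then immediate from general ideal theory.

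For $\langle AB\rangle=\langle BA\rangle$ I would observe that $a_{v},b_{v}$ are commuting projections and that, since $a_{v}\le t_{v}$ and $b_{w}\le t_{w}$ with $t_{v}t_{w}=0$ for $v\neq w$, one has $a_{v}b_{w}=b_{w}a_{v}$ for all $v,w$ (both sides vanishing when $v\neq w$); thus $AB=BA$ as sets, giving $\langle AB\rangle=\langle BA\rangle$. Moreover $a_{v}b_{v}=\prod_{i\in K\cup L}(t_{v}-\sum_{e\in v\Lambda^{e_{i}}}t_{e}t_{e}^{\ast})=\prod_{e\in\bigcup_{i\in K\cup L}v\Lambda^{e_{i}}}(t_{v}-t_{e}t_{e}^{\ast})$, the generator associated to $\mathcal{E}_{K\cup L}$ at $v$, while $a_{v}b_{w}=0$ for $v\neq w$; hence $\langle AB\rangle=I_{\emptyset,\mathcal{E}_{K\cup L}}$. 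The inclusion $\langle AB\rangle\subseteq\langle A\rangle\cap\langle B\rangle$ is clear, since each $a_{v}b_{w}$ lies in both $\langle A\rangle$ and $\langle B\rangle$.

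It remains to prove $\langle A\rangle\cap\langle B\rangle\subseteq\langle AB\rangle$, and here I would invoke Theorem \ref{gauge-invariant-ideals-Toeplitz-algebra}. All the generators above are fixed by the gauge action (as $t_{v}$ and $t_{e}t_{e}^{\ast}$ are), so $\langle A\rangle$, $\langle B\rangle$, $\langle AB\rangle$, and hence $I:=\langle A\rangle\cap\langle B\rangle$ are gauge-invariant. I would compute the pair $(H_{I},\mathcal{E}_{I})$ attached to $I$ and show it equals $(\emptyset,\mathcal{E}_{K\cup L})$. Because $TC^{\ast}(\Lambda)/\langle A\rangle\cong C^{\ast}(\Lambda;\mathcal{E}_{K})$ sends each $t_{v}$ to the nonzero $s_{v}^{\mathcal{E}_{K}}$ (Proposition \ref{properties-of-relative-CK-algebra}), no $t_{v}$ lies in $\langle A\rangle$, and likewise for $\langle B\rangle$, so $H_{I}=\{v:t_{v}\in\langle A\rangle\}\cap\{v:t_{v}\in\langle B\rangle\}=\emptyset$. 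For the edge part, applying the two quotient maps together with Proposition \ref{properties-of-relative-CK-algebra} shows that for $E\in\operatorname{FE}(\Lambda^{1})$ one has $\prod_{e\in E}(t_{r(E)}-t_{e}t_{e}^{\ast})\in\langle A\rangle$ iff $E\in\widehat{\mathcal{E}_{K}}$, and similarly for $\langle B\rangle$; hence $\mathcal{E}_{I}=\min(\widehat{\mathcal{E}_{K}}\cap\widehat{\mathcal{E}_{L}})$.

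The conceptual heart is then the combinatorial identity $\widehat{\mathcal{E}_{K}}\cap\widehat{\mathcal{E}_{L}}=\widehat{\mathcal{E}_{K\cup L}}$, which I would establish by characterising $\widehat{\mathcal{E}_{K}}$ as those $E\in\operatorname{FE}(\Lambda^{1})$ with $\bigcup_{i\in K}r(E)\Lambda^{e_{i}}\subseteq E$ (the containing member of $\mathcal{E}_{K}$ is forced to be $\bigcup_{i\in K}r(E)\Lambda^{e_{i}}$, since all its edges have range $r(E)$); the identity then follows because containing all $K$-coloured together with all $L$-coloured edges at $r(E)$ is the same as containing all $(K\cup L)$-coloured ones. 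Consequently $\mathcal{E}_{I}=\min(\widehat{\mathcal{E}_{K\cup L}})=\mathcal{E}_{K\cup L}$ by Lemma \ref{efficient-min-e-hat}, using that $\mathcal{E}_{K\cup L}$ is efficient (Remark \ref{example-efficient}). Thus $I$ and $\langle AB\rangle$ share the pair $(\emptyset,\mathcal{E}_{K\cup L})$, and Theorem \ref{gauge-invariant-ideals-Toeplitz-algebra}(d) yields $I=I_{\emptyset,\mathcal{E}_{K\cup L}}=\langle AB\rangle$. The main obstacle is not any single estimate but the careful bookkeeping of identifying the three ideals with the correct $I_{H,\mathcal{E}}$ and of translating ideal membership into the edge-satiation condition through Proposition \ref{properties-of-relative-CK-algebra}.
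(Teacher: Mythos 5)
Your proposal is correct and follows essentially the same route as the paper: both identify $\left\langle A\right\rangle =I_{\emptyset ,\mathcal{E}_{K}}$ and $\left\langle B\right\rangle =I_{\emptyset ,\mathcal{E}_{L}}$, observe that $I:=\left\langle A\right\rangle \cap \left\langle B\right\rangle $ is gauge-invariant, compute the classifying pair $(H_{I},\mathcal{E}_{I})=(\emptyset ,\mathcal{E}_{K\cup L})$, and conclude $I=\langle AB\rangle $ from Theorem \ref{gauge-invariant-ideals-Toeplitz-algebra}(d). The differences are only in the bookkeeping: where you translate ideal membership into edge satiation via Proposition \ref{properties-of-relative-CK-algebra} and prove the identity $\widehat{\mathcal{E}_{K}}\cap \widehat{\mathcal{E}_{L}}=\widehat{\mathcal{E}_{K\cup L}}$ directly (then apply Lemma \ref{efficient-min-e-hat}), the paper reaches the same identification of $\mathcal{E}_{I}$ through parts (c) and (e) of Theorem \ref{gauge-invariant-ideals-Toeplitz-algebra} together with (E1); moreover your explicit check that $a_{v}b_{w}=0$ for $v\neq w$ makes the equalities $\langle AB\rangle =\langle BA\rangle =I_{\emptyset ,\mathcal{E}_{K\cup L}}$ (which the paper dispatches with ``a similar argument'') fully rigorous.
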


\begin{proof}
We have $A\subseteq \left\langle A\cup B\right\rangle $ and $B\subseteq
\left\langle A\cup B\right\rangle $. Because $\left\langle A\right\rangle $
is the smallest ideal which contains $A$, $\left\langle A\right\rangle
\subseteq \left\langle A\cup B\right\rangle $ and similarly, $\left\langle
B\right\rangle \subseteq \left\langle A\cup B\right\rangle $. So $%
\left\langle A\right\rangle +\left\langle B\right\rangle \subseteq
\left\langle A\cup B\right\rangle $.

We show the reverse inclusion. Since $A\subseteq \left\langle A\right\rangle
+\left\langle B\right\rangle $ and $B\subseteq \left\langle A\right\rangle
+\left\langle B\right\rangle $, we have $A\cup B\subseteq \left\langle
A\right\rangle +\left\langle B\right\rangle $. So $\left\langle A\cup
B\right\rangle \subseteq \left\langle A\right\rangle +\left\langle
B\right\rangle $ since $\left\langle A\right\rangle +\left\langle
B\right\rangle $ is a closed ideal, as required.

Now we show $\left\langle A\right\rangle \cap \left\langle B\right\rangle
=\langle AB\rangle $. Write $I:=\left\langle A\right\rangle \cap
\left\langle B\right\rangle $. First we show that $I$ is a gauge-invariant
ideal. Define $\mathcal{E}:=\{ \bigcup_{i\in K}v\Lambda ^{e_{i}}:v\in
\Lambda ^{0}\} $ and we have
\begin{equation*}
\left\langle A\right\rangle =\Big<\Big\{\prod_{i\in K}\prod_{e\in v\Lambda
^{e_{i}}}(t_{v}-t_{e}t_{e}^{\ast }):v\in \Lambda ^{0}\Big\}\Big>=\Big<\Big\{%
\prod_{e\in E}(t_{r(E)}-t_{e}t_{e}^{\ast }):E\in \mathcal{E}\Big\}\Big>.
\end{equation*}%
By Remark \ref{example-efficient}, $\mathcal{E}$ is efficient and by Theorem %
\ref{gauge-invariant-ideals-Toeplitz-algebra}(a), we have
\begin{equation}
\left\langle A\right\rangle =I_{\emptyset ,\mathcal{E}}
\label{equ-ideals-Toeplitz-<A>}
\end{equation}%
is a gauge-invariant ideal. Similarly, $\left\langle B\right\rangle $ is a
gauge-invariant ideal. For $z\in \mathbb{T}^{k}$, we have $\gamma
_{z}(\left\langle A\right\rangle )\subseteq \left\langle A\right\rangle $
and $\gamma _{z}(\left\langle B\right\rangle )\subseteq \left\langle
B\right\rangle $ where $\gamma $ is the gauge action of $\mathbb{T}^{k}$ on $%
TC^{\ast }(\Lambda )$. Then%
\begin{equation*}
\gamma _{z}(I)\subseteq \gamma _{z}(\left\langle A\right\rangle )\subseteq
\left\langle A\right\rangle \text{ and }\gamma _{z}(I)\subseteq \gamma
_{z}(\left\langle B\right\rangle )\subseteq \left\langle B\right\rangle
\text{.}
\end{equation*}%
So $\gamma _{z}(I)\subseteq \left\langle A\right\rangle \cap \left\langle
B\right\rangle =I$ and $I$ is a gauge-invariant ideal.

Now we investigate $H_{I}$ and $\mathcal{E}_{I}$. For $v\in \Lambda ^{0}$,
we have
\begin{equation}
\prod_{i\in K\cup L}\prod_{e\in v\Lambda ^{e_{i}}}(t_{v}-t_{e}t_{e}^{\ast
})\in \left\langle A\right\rangle \cap \left\langle B\right\rangle =I\text{.}
\label{equ-ideals-Toeplitz-belong-to-I}
\end{equation}%
Thus $I\neq 0$. Since $I$ is a gauge invariant ideal, by Theorem \ref%
{gauge-invariant-ideals-Toeplitz-algebra}(b), $H_{I}$ is hereditary and $%
\mathcal{E}_{I}$ is efficient$\,$. By Theorem \ref%
{gauge-invariant-ideals-Toeplitz-algebra}(e), \eqref{equ-ideals-Toeplitz-<A>}
implies $H_{I_{\emptyset ,\mathcal{E}}}=\emptyset $ and $H_{\left\langle
A\right\rangle }=\emptyset $. Thus for $v\in \Lambda ^{0}$, $t_{v}\notin
\left\langle A\right\rangle $ and since $I\subseteq \left\langle
A\right\rangle $, $t_{v}\notin I$. \ Hence
\begin{equation}
H_{I}=\emptyset .  \label{equ-ideals-Toeplitz-H_I}
\end{equation}

We claim $\mathcal{E}_{I}=\{\bigcup_{i\in K\cup L}v\Lambda ^{e_{i}}:v\in
\Lambda ^{0}\}$. To show $\mathcal{E}_{I}\subseteq \{\bigcup_{i\in K\cup
L}v\Lambda ^{e_{i}}:v\in \Lambda ^{0}\}$, take $E\in \mathcal{E}_{I}$. Write
$G:=\bigcup_{i\in K\cup L}r( E) \Lambda ^{e_{i}}$. We claim $E=G$. We have $%
\prod_{e\in E}(t_{r( E) }-t_{e}t_{e}^{\ast })\in I\subseteq \left\langle
A\right\rangle =I_{\emptyset ,\mathcal{E}}$ (see %
\eqref{equ-ideals-Toeplitz-<A>}). Since $E\in \operatorname{FE}( \Lambda ^{1}) $, by
Theorem \ref{gauge-invariant-ideals-Toeplitz-algebra}(c), $\bigcup_{i\in
K}r( E) \Lambda ^{e_{i}}\subseteq E$. Using a similar argument to $%
\left\langle B\right\rangle $, $\bigcup_{j\in L}r( E) \Lambda
^{e_{j}}\subseteq E$. Hence $G\subseteq E$. Since $\prod_{e\in G}(t_{r( E)
}-t_{e}t_{e}^{\ast })\in I$ (see \eqref{equ-ideals-Toeplitz-belong-to-I}),
by Theorem \ref{gauge-invariant-ideals-Toeplitz-algebra}(c), there exists $%
F\in \mathcal{E}_{I}$ such that $F\subseteq G$. So $F\subseteq G\subseteq E$%
. Since $E,F\in \mathcal{E}_{I}$, by (E1), $F=E$ and then $E=G$. Therefore $%
\mathcal{E}_{I}\subseteq \{\bigcup_{i\in K\cup L}v\Lambda ^{e_{i}}:v\in
\Lambda ^{0}\}$.

To show $\{\bigcup_{i\in K\cup L}v\Lambda ^{e_{i}}:v\in \Lambda
^{0}\}\subseteq \mathcal{E}_{I}$, take $v\in \Lambda ^{0}$ and write $%
E:=\bigcup_{i\in K\cup L}v\Lambda ^{e_{i}}$. We show $E\in \mathcal{E}_{I}$.
By \eqref{equ-ideals-Toeplitz-belong-to-I}, $\prod_{e\in
E}(t_{v}-t_{e}t_{e}^{\ast })\in I$ and then by Theorem \ref%
{gauge-invariant-ideals-Toeplitz-algebra}(c), there exists $F\in v\mathcal{E}%
_{I}$ such that $F\subseteq E$. Since $\mathcal{E}_{I}\subseteq
\{\bigcup_{i\in K\cup L}v\Lambda ^{e_{i}}:v\in \Lambda ^{0}\}$, we have $%
F=\bigcup_{i\in K\cup L}v\Lambda ^{e_{i}}$. Hence $E=F\in \mathcal{E}_{I}$.
Therefore
\begin{equation}
\mathcal{E}_{I}=\Big\{\bigcup_{i\in K\cup L}v\Lambda ^{e_{i}}:v\in \Lambda
^{0}\Big\}\text{.}  \label{equ-ideals-Toeplitz-E_I}
\end{equation}

Since $H_{I}=\emptyset $ and $\mathcal{E}_{I}=\{\bigcup_{i\in K\cup
L}v\Lambda ^{e_{i}}:v\in \Lambda ^{0}\}$ (\eqref{equ-ideals-Toeplitz-H_I}
and \eqref{equ-ideals-Toeplitz-E_I}), by Theorem \ref%
{gauge-invariant-ideals-Toeplitz-algebra}(d),
\begin{equation*}
\Big<\Big\{\prod_{i\in K\cup L}\prod_{e\in v\Lambda
^{e_{i}}}(t_{v}-t_{e}t_{e}^{\ast }):v\in \Lambda ^{0}\Big\}\Big>%
=I=\left\langle A\right\rangle \cap \left\langle B\right\rangle \text{.}
\end{equation*}%
On the other hand, we have%
\begin{align*}
AB& =\{ ab:a\in A,b\in B\} \\
& =\Big\{\Big(\prod_{i\in K}\prod_{e\in v\Lambda
^{e_{i}}}(t_{v}-t_{e}t_{e}^{\ast })\Big)\Big(\prod_{j\in L}\prod_{e\in
v\Lambda ^{e_{j}}}(t_{v}-t_{e}t_{e}^{\ast })\Big):v\in \Lambda ^{0}\Big\}
\end{align*}%
and then $\left\langle A\right\rangle \cap \left\langle B\right\rangle
=\langle AB\rangle $. Using a similar argument, $\left\langle A\right\rangle
\cap \left\langle B\right\rangle =\langle BA\rangle $.
\end{proof}

\begin{proof}[Proof of Proposition \protect\ref{diagram-Toeplitz-algebra}]
The second row and the second column are exact. On the other hand, using
the third isomorphism theorem, we get the exactness of the third row and the
third column. Next we show that the first row and the first column are
exact. Lemma \ref{ideals-in-Toeplitz-algebra} tells $\left\langle
A\right\rangle +\left\langle B\right\rangle =\left\langle A\cup
B\right\rangle $ and $\left\langle A\right\rangle \cap \left\langle
B\right\rangle =\left\langle AB\right\rangle =\left\langle BA\right\rangle $%
. Then by the second isomorphism theorem,%
\begin{equation*}
\frac{\left\langle A\right\rangle }{\left\langle AB\right\rangle }=\frac{%
\left\langle A\right\rangle }{\left\langle A\right\rangle \cap \left\langle
B\right\rangle }\cong \frac{\left\langle A\right\rangle +\left\langle
B\right\rangle }{\left\langle B\right\rangle }=\frac{\left\langle A\cup
B\right\rangle }{\left\langle B\right\rangle }\text{,}
\end{equation*}%
\begin{equation*}
\frac{\left\langle B\right\rangle }{\left\langle BA\right\rangle }=\frac{%
\left\langle B\right\rangle }{\left\langle A\right\rangle \cap \left\langle
B\right\rangle }\cong \frac{\left\langle A\right\rangle +\left\langle
B\right\rangle }{\left\langle A\right\rangle }=\frac{\left\langle A\cup
B\right\rangle }{\left\langle A\right\rangle }\text{.}
\end{equation*}
\end{proof}

\end{document}